\title{Overconvergent cohomology, $p$-adic $L$-functions and families for $\mathrm{GL}(2)$ over CM fields}
\author{Daniel Barrera Salazar and Chris Williams}
\date{}
\newcommand{\Addressesshort}{{
		\bigskip
		\footnotesize
		Daniel Barrera Salazar; Universidad de Santiago de Chile $\ \cdot\ $
		\texttt{daniel.barrera.s@usach.cl}
		
		Chris Williams; University of Warwick $ \ \cdot \ $
		\texttt{christopher.d.williams@warwick.ac.uk}\\

}}
\newcommand{\cyc}{{\mathrm{cyc}}}
\newcommand{\n}{\mathfrak{n}}
\numberwithin{equation}{section}
\numberwithin{figure}{section}
\begin{document}

\setlength{\abovedisplayskip}{7pt}
\setlength{\belowdisplayskip}{7pt}
\setlength{\abovedisplayshortskip}{7pt}
\setlength{\belowdisplayshortskip}{7pt}

%
%

\pagestyle{fancy}
\maketitle
\renewcommand{\thefootnote}{\fnsymbol{footnote}} 

\footnotetext{\today. \emph{2010 MSC:} Primary 11F41, 11F67, 11F85, 11S40; Secondary 11M41
}

\begin{abstract}
	The use of overconvergent cohomology in constructing $p$-adic $L$-functions, initiated by Stevens and Pollack--Stevens in the setting of classical modular forms, has now been established in a number of settings. The method is compatible with constructions of eigenvarieties by Ash--Stevens, Urban and Hansen, and is thus well-adapted to non-ordinary situations and variation in $p$-adic families. In this note, we give an exposition of the ideas behind the construction of $p$-adic $L$-functions via overconvergent cohomology. Conditional on the non-abelian Leopoldt conjecture, we illustrate them by constructing $p$-adic $L$-functions attached to families of base-change automorphic representations for $\mathrm{GL}(2)$ over CM fields. As a corollary, we prove a $p$-adic Artin formalism result for base-change $p$-adic $L$-functions. 
\end{abstract}

\setcounter{tocdepth}{2}
\footnotesize\tableofcontents \normalsize
\setlength{\parskip}{3pt}

\section*{Introduction}

Let $\pi$ be an automorphic representation of a reductive group $G$, and let $L(\pi,s)$ be an attached $L$-function. The Bloch--Kato conjecture for $L(\pi,s)$ relates its special values to arithmetic data attached to $\pi$, providing a deep link between analysis and arithmetic. Much of what we know about the Bloch--Kato conjectures has come from recasting them into a $p$-adic setting, relating arithmetic data to a \emph{$p$-adic $L$-function} $L_p(\pi)$, which is a $p$-adic analogue of $L(\pi,s)$.

We first recall some general expectations about $p$-adic $L$-functions. For clarity, in this introduction we restrict to a special case. Let $E$ be a number field of degree $r+2s$, where $r$ (resp.\ $s$) is the number of real (resp.\ complex) places of $E$, and let $G = \mathrm{Res}_{E/\Q}\GL_n$. Let $\pi$ be an automorphic representation of $G(\A)$, and $L(\pi,\varphi)$ its standard $L$-function (with the variable $\varphi : E^\times\backslash\A_E^\times \to \C^\times$ ranging over Hecke characters of $E$). We write $\Lambda(\pi,\varphi)$ for the $L$-function completed with the factors at infinity. Then we expect:
\begin{expectation-intro}
	\begin{enumerate}\setlength{\itemsep}{5pt}
		\item There exists a set of \emph{critical Hecke characters} $J$, a set of signs $S_\pi \subset \{\pm 1\}^{2r},$ and a set of \emph{periods} $\{ \Omega_\pi^\epsilon \in \C^\times : \epsilon \in S_\pi\}$, such that for any $\varphi \in J$ of `type $\epsilon$', we have
		\[
		\frac{\Lambda(\pi,\varphi)}{\Omega_\pi^\epsilon} \in \overline{\Q}.
		\]
		Here $S_\pi = \{\pm 1\}^{2r}$ if $n$ is even, yielding $2r$ periods; and it is a singleton set if $n$ is odd, giving only one (see e.g. \cite[\S1]{RS08}). `Being of type $\epsilon$' depends only on $\varphi_\infty$ and $\varphi_{f}(-1)$.
		
		\item\label{item:int} If $J$ is non-empty, then we can $p$-adically interpolate the algebraic values $\Lambda(\pi,\varphi)/\Omega_\pi^\epsilon$. By class field theory, if $\varphi$ is a Hecke character of $E$ of conductor dividing $(p^\infty)$, then there is a corresponding character $\varphi_{(p)}$ on $\mathrm{Gal}_p(E),$ the Galois group of the maximal abelian extension of $E$ unramified outside $p$ and $\infty$. Then there exists a $p$-adic distribution $L_p(\pi)$ on $\mathrm{Gal}_p(E)$, satisfying a good growth condition, such that
		\begin{align*}
			L_p(\pi,\varphi_p) &\defeq \int_{\mathrm{Gal}_p(E)} \varphi_{(p)}(x) \cdot L_p(\pi)\\
			&= (*) \frac{\Lambda(\pi,\varphi)}{\Omega_\pi^\epsilon},
		\end{align*}
		for any $\varphi \in J$ with $p$-power conductor, where $(*)$ is an explicit scaling factor including the Euler factors at $p$.
		
		\item\label{item:families} The distribution $L_p(\pi)$ varies rigid-analytically as $\pi$ varies in a $p$-adic family over an eigenvariety. (We describe precisely what this means in \S\ref{sec:families} and Theorem \ref{thm:intro} below).
		
	\end{enumerate}
\end{expectation-intro}

\begin{remarks*}
	\begin{enumerate}[--]\setlength{\itemsep}{0pt}
		\item Membership of the set $J$ corresponds to Deligne's criterion for critical values of motivic $L$-functions in \cite{Del79}, and can be characterised explicitly in this setting. There is a set $J_\infty \subset \Z^{r+2s}$ of \emph{critical infinity types}, and a Hecke character can be critical only if it has infinity type $\infty(\varphi) \in J_\infty$. If $n$ is even, this is also sufficient; if $n$ is odd, then one must also take a parity condition into consideration.
		\item In practice, before we can complete (2), we must add $p$ to the level (if it is not already present) via a choice of $p$-refinement (or stabilisation). This has the effect of deleting the Euler factors at $p$, which cannot possibly vary $p$-adically, due to the presence of a $p^s$ term. The $p$-adic $L$-function $L_p(\pi)$, and its interpolation and growth properties, will depend crucially on this choice of refinement. We elaborate further in \S\ref{sec:set-up}.
	\end{enumerate}
\end{remarks*}

In practice, this program is hard to carry out. Part (1) is a fairly classical question which is difficult in itself. There remain, however, many cases where we know (1) but not (2); and even further where we know (2) but not (3). For example, for $n \geq 3$, we have a very poor understanding even for $E=\Q$. In the case $n=3$, let $\pi$ be a cohomological, trivial weight, $p$-ordinary automorphic representation that is genuinely for $\GL_3$ (as opposed to, say, a symmetric square transfer); then (2) is the expectation that $L_p(\pi)$ is a $p$-adic \emph{measure} (a \emph{bounded} distribution). There remains, however, no construction of such an object\footnote{Such a construction will be the topic of forthcoming work of the second author with David Loeffler. A construction of an \emph{unbounded} distribution interpolating the $L$-values of $\pi$ was the main result of \cite{Mah00}.}. When $E$ is not imaginary quadratic or totally real, there is an additional complication: it is not at all clear what growth condition $L_p$ should satisfy in the non-ordinary case.

In this paper, in \S\ref{sec:survey} we give a survey of a method for pursuing this program that has given very general results\footnote{See \cite{PS11,PS12,Bel12} for classical modular forms; \cite{Bar15,BDJ17,BH17} for Hilbert modular forms; \cite{Wil17,BW18} for Bianchi modular forms; and \cite{BW_CJM} for general number fields.} in the context of $\GL_2$, that of \emph{overconvergent cohomology}. An advantage of this approach is that, unlike some more classical methods, it sees no difference between ordinary and non-ordinary cases, instead using strictly weaker `small slope' assumptions. 

Further, it is possible to use overconvergent cohomology to construct $p$-adic weight families \cite{AS08,Urb11,Han17}, which means the method is particularly well-adapted to approach (3). We illustrate this in \S\ref{sec:CM families}, where -- assuming the non-abelian Leopoldt conjecture -- we give new cases of (3) in the case of the base-change of Hilbert modular forms to a CM extension, using the construction of (2) given in our previous paper \cite{BW_CJM}. More precisely, let $f^+$ be a cuspidal Hilbert eigenform over a totally real field $E^+$, let $E/E^+$ be a CM extension, and let $f$ be the base-change of $f^+$ to $E$. We assume both $f^+$ and $f$ are non-critical in the sense of Definition \ref{def:non-critical}, which is implied by being small slope (Theorem \ref{thm:small-slope}); in particular, they give rise to points $x_f$ and $x_{f^+}$ in respective eigenvarieties. Let $\sX(\Gal_p(E))$ denote the rigid space of $p$-adic characters on $\Gal_p(E)$.

\begin{theorem-intro}\label{thm:intro}
	Assume the non-abelian Leopoldt conjecture (Conjecture \ref{conj:dim}). In any one-dimensional subvariety of the Hilbert eigenvariety through $x_{f^+}$, lying over a smooth rigid curve in weight space, there exists a neighbourhood $\cV^+$ and a rigid analytic function
	\[
	L_p : \cV^+ \times \sX(\Gal_p(E)) \longrightarrow \Cp
	\]
	satisfying the following interpolation property:
	\begin{itemize}\setlength{\itemsep}{0pt}
		\item for any classical point $y \in \cV^+$, corresponding to a Hilbert modular form $g_y^+$ with base-change $g_y \defeq g_y^+/E$,
		\item and for any Hecke character $\varphi$ of $E$ such that $L(g_y,\varphi)$ is a critical value,
	\end{itemize}
	we have
	\[
	L_p(y,\varphi_{(p)}) = c_{g_y}\cdot (*)\cdot \frac{\Lambda(g_y,\varphi)}{\Omega_{g_y}},
	\]
	for $(*)$ the same interpolation factor as above and $c_{g_y}$ a $p$-adic period.
\end{theorem-intro}
In particular, this holds for $y = x_f$. We define the notion of critical Hecke character, and make the interpolation terms precise, in Theorem \ref{p:admissible}. The non-abelian Leopoldt conjecture is explained in \S\ref{sec:leopoldt}, and is required to control the degree of overconvergent cohomology in which families appear.

In \S\ref{sec:artin formalism}, we use this to prove a $p$-adic Artin formalism result; namely, that the $p$-adic $L$-function of a base-change form factors in a manner analogous to the classical $L$-function after restriction to the cyclotomic variable. We make this precise in Theorem \ref{thm:artin formalism}.\\

\textbf{Acknowledgements:} We thank the organisers of Escuela Latinoamericana de Geometr\'{i}a Algebraica (ELGA), and the Universidad de Talca, for providing a stimulating place to do mathematics; much of this paper was written there. We also thank Pak-Hin Lee and the anonymous referee for their valuable comments and corrections on an earlier version of the paper. DBS was supported by the FONDECYT PAI 77180007. CW was funded by an EPSRC postdoctoral fellowship EP/T001615/1.

\renewcommand{\thesection}{\Roman{section}}


\section{Survey of the construction of $p$-adic $L$-functions}\label{sec:survey}

In this section, we give a survey of the construction of $p$-adic $L$-functions using overconvergent cohomology, highlighting the key difficult steps. The construction is summarised in Figure 1, which we will explain row by row. We maintain notation from the introduction.

\begin{figure}\centering
\begin{tikzcd}[column sep = tiny,row sep = large]
	\mathrm{[T]}&&&\Phi_{\cV}\arrow[rrrrrrrrrr, mapsto, bend left=18]\arrow[dd, mapsto] &\in & \hc{d}(Y_K,\sD_\Sigma)\arrow[dd, "\mathrm{sp}_\lambda"] \arrow[rrrrrr, "\mathrm{Ev}_\Sigma"] &&&&& & \cD(\mathrm{Gal}_p(E),\cO(\Sigma))\arrow[dd, "\mathrm{sp}_\lambda"] & \ni & L_p(\cV)\arrow[dd,mapsto]\\
&&&&&&&&&&&&&\\
\mathrm{[M]}&&& \Phi_{\refpi} \arrow[rrrrrrrrrr, bend left=18, mapsto] & \in & \hc{d}(Y_K,\sD_\lambda)\arrow[dd, "\rho"] \arrow[rrrrrr, "\mathrm{Ev}_\lambda"] &&&& && \cD(\mathrm{Gal}_p(E),\overline{\Q}_p)\ar[dd, "\text{eval.\ at }\varphi_{(p)}"] & \ni & L_p(\refpi)\arrow[dd,mapsto]\\
&&&&&&&&&&&&&\\
\mathrm{[B]} &&&\phi_{\refpi} \arrow[rrrrrrrrrr, bend left=18, mapsto]\arrow[uu,dashed,"\rotatebox{90}{\tiny\text{control thm}}\normalsize"]  &\in &\hc{d}(Y_K,\sV_\lambda^\vee) \arrow[rrrrrr,"\mathrm{Ev}_\varphi"] &&&&& & \overline{\Q}_p & \ni &(*)\frac{\Lambda(\pi,\varphi)}{\Omega_\pi^\epsilon}\\
&&&&&&&&&& \refpi\arrow[lllllllu,dotted]\arrow[rrru,dotted] &&&&&
\end{tikzcd}
\caption{\label{main diagram}\emph{Strategy for constructing $p$-adic $L$-functions}}
\end{figure}

\subsection{Set-up}
\label{sec:set-up}

Throughout this paper, fix an embedding $\overline{\Q} \hookrightarrow \overline{\Q}_p$. Let $G/\Q$ be a reductive group with a fixed model over $\Z$ (which we will also denote $G$); the example of the introduction is $G = \mathrm{Res}_{E/\Q}\GL_n$. Write $B = TN$ for the Borel, where $T$ is the torus and $N$ the unipotent subgroup. Let $B^- = N^-T$ denote the opposite Borel. Let $\A = \R \times \A_f$ denote the ring of adeles (of $\Q$). 

Let $\pi$ be a (regular, algebraic, cuspidal) automorphic representation of $G(\A)$.
Let $K\tame$ be the largest open compact subgroup of $G(\A_f)$ for which $\pi$ admits $K$-fixed vectors. We require specific level at $p$; let
\[
I_{p} \defeq \{g \in G(\Zp) : g\newmod{p} \in B(\F_p)\}
\]
denote the Iwahori. When $G = \GL_n$, $I_p$ is group of matrices that are upper-triangular modulo $p$. If $K\tame_p \not\subset I_p$, before $p$-adic interpolation is possible, we must take a \emph{$p$-refinement} or \emph{stabilisation} of $\pi$. This is a choice, for each $\pri|p$, of an eigenvector in $\pi_{\pri}^{I_p\cap K\tame_{\pri}}$ for a Hecke operator $U_{\pri}$, and yields a collection of $U_{\pri}$-eigenvalues $(\alpha_{\pri})_{\pri|p}$. For $\GL_n$, such a choice is equivalent to choosing an ordering of the Satake parameters.

For $\GL_2$, there is only one sensible choice of operator $U_{\pri}$, but for more general groups the choice of $U_{\pri}$ can be subtle. It will be fundamental to all of our constructions, and we comment further on the choice in \S\ref{sec:control}.

Note in particular that the choice of $p$-refinement determines a cuspidal Hecke eigenform $f \in \pi^{K(\refpi)}$, where $K(\refpi)_v = K\tame_v$ for all $v\nmid p$ and $K(\refpi)_{\pri} =  I_p\cap K\tame_{\pri}$ for all $\pri|p$. We henceforth assume that the choice of $p$-refinement, the corresponding eigenform, and the $U_{\pri}$-eigenvalues $\alpha_{\pri}$ are fixed, and denote all of this data by $\refpi$. We usually just write $K$ in place of $K(\refpi) \subset G(\A_f)$.

\begin{example}
	If $G = \GL_2/\Q$, then $B$ comprises upper-triangular matrices, and $T$ is the diagonal. If an automorphic representation $\pi$ is generated by a newform $f_{\mathrm{new}} \in S_k(\Gamma_0(N))$, then $K\tame$ corresponds to the level $\Gamma_0(N)$, and $K\tame_p = \{\smallmatrd{a}{b}{c}{d} \in G(\Zp): c \equiv 0 \newmod{p^r}\}$, where $p^r || N$. We have two cases:
	\begin{itemize}
		\item[(a)] If $r > 0$, then $K\tame_p \subset I_p$, and there is nothing to be done. Note that $\pi_p^{I_p\cap K\tame_p}$ is in this case a line, and there is a single choice of eigenform (and eigenvalue $\alpha$).
		\item[(b)] If $r = 0$, then $\pi_p^{I_p \cap K\tame_p} = \pi_p^{I_p}$ is two-dimensional, and the characteristic polynomial of $U_p$ on this is $X^2 - a_p(f_{\mathrm{new}})X + p^{k-1}.$ There are thus two $p$-refinements, corresponding to the roots $\alpha$ and $\beta$ of this, which are conjectured to be distinct. These correspond to Hecke eigenforms $f_\alpha, f_\beta$ in $S_k(\Gamma_0(Np))$ on which the Hecke operators for $\ell\neq p$ act with the same eigenvalues as $f_{\mathrm{new}}$ (and on which $U_p$ acts as $\alpha, \beta$ respectively).
	\end{itemize}
\end{example}

\subsection{Classical cohomology} \label{sec:classical}
For this method to work, it is crucial that there exist cohomology classes $\phi_\pi$ in the `classical' cohomology, with $p$-adic coefficients, which moreover are eigenclasses for a natural Hecke action with the same eigenvalues as $f$. Such classes are provided by classical automorphic theory, which we now describe. Attached to any open compact subgroup of $K \subset G(\A_f)$, we have an associated locally symmetric space 
\[
Y_K \defeq G(\Q)\backslash G(\A)/K_\infty^\circ K,
\]
where $K_\infty^0$ is a maximal compact-mod-centre subgroup of $G(\R)$. This decomposes into a finite disjoint union of quotients of symmetric spaces by arithmetic subgroups of $G(\Q)$. If $M$ is a right $K$-module, it gives a local system $\sM$ on $Y_K$, defined as the locally constant sections of projection
\begin{equation}\label{eq:local system}
	G(\Q) \backslash[G(\A) \times M]/K_\infty^\circ K \longrightarrow Y_K
\end{equation} 
with action $\gamma(g,m)zu = (\gamma gzu, m|u)$, for $\gamma \in G(\Q), z \in K_\infty^\circ$ and $u \in K$. Note this local system is trivial if the centre $Z(G(\Q)) \cap K$ does not act trivially.

\subsubsection{Classical coefficient sheaves}
The cohomology we consider has coefficients in algebraic local systems, which arise as induced representations of weight characters. A \emph{weight} is a character of the torus $T$; it is \emph{algebraic} if the character is algebraic. The weights we consider may be viewed $p$-adically as characters $T(\Zp) \to \overline{\Q}_p$ in a natural way. The weight of an automorphic representation $\pi$, and whether or not $\pi$ is cohomological, depends only on its infinite component $\pi_\infty$. For the rest of the paper, we will \emph{always} assume $\lambda$ is cohomological.

For a $\Zp$-algebra $R$, and a $p$-adic weight $\lambda$, we define an algebraic representation -- the $R$-points of the representation of $G$ of highest weight $\lambda$ -- via parabolic induction:
\begin{align}\label{eq:algebraic}
	V_\lambda(R) &\defeq \mathrm{Ind}_{B^-(\Zp)}^{G(\Zp)} \lambda\notag\\
	&=  \{\theta : G(\Zp) \rightarrow R : \theta \text{ is algebraic, }  \theta(n^{-}tg) = \lambda(t)\theta(g) \ \ \forall n^-,t,g\},
\end{align}
where $n^- \in N^-(\Zp), t \in T(\Zp)$ and $g \in G(\Zp)$. It is standard that, via restriction, $V_\lambda(R)$ may be identified with the set of functions $\theta : I_p \to R$ satisfying \eqref{eq:algebraic} (e.g.\ \cite[\S3.2.9]{Urb11} with trivial $\epsilon$). We also have the Iwahori decomposition $I_p = I_p^-T(\Zp)N(\Zp)$, where $I_p^- \defeq I_p \cap N^-(\Zp)$. Thus by the transformation property any $\theta \in V_\lambda(R)$ is uniquely determined by its values on the unipotent $N(\Zp)$.

The space $V_\lambda(R)$ is naturally a left-$G(\Zp)$-module by right translation; we then let $u \in K$ act via its projection $u_p$ to $G(\Zp)$. The linear $R$-dual $V_\lambda^\vee(R) \defeq \mathrm{Hom}(V_\lambda(R),R)$ is then a right $K$-module by $\mu|u(\theta) = \mu(u \cdot \theta)$, and we get an associated local system $\sV_\lambda^\vee(R)$. We similarly have a complex local system $\sV_{\lambda}^{\vee}(\C)$ by instead considering the complex algebraic induction.

\subsubsection{Classical cohomology classes and rationality}
Suppose $\pi$ admits $K$-fixed vectors. Then there exist integers $q,\ell$ such that $\pi$ contributes to the cohomology groups $\hc{i}(Y_K,\sV_\lambda^\vee(\C))$ for $q \leq i \leq q+\ell$.  More precisely, we apply this to $K = K(\refpi)$ from above, and the Hecke eigenform $f \in \pi^K$ corresponding to our choice of $p$-refinement. Then for such $i$, there exist Hecke eigenclasses $\phi_{\refpi,\C} \in \hc{i}(Y_K,\sV_\lambda^\vee(\C))$ with the same eigenvalues as $f$. See \cite{Bor74}, \cite{Bor81} or \cite{Clo90} for more details.

These cohomology groups admit natural rational structure. However, whilst a choice of cusp form $f \in \pi$ determines canonical \emph{complex} cohomology classes, it is more subtle to obtain analogous classes in the rational cohomology. In desirable situations, after dividing by appropriate complex periods $\Omega_\pi^\epsilon$, we can construct a Hecke eigenclass $\phi_{\refpi,\overline{\Q}}$ with coefficients in $V_\lambda^\vee(\overline{\Q})$, which we may then consider to have $p$-adic coefficients, yielding a class $\phi_{\refpi} \in \hc{i}(Y_K,\sV_\lambda^\vee(\overline{\Q}_p))$. The periods will depend on the degree of cohomology we work in.

\begin{example}\label{ex:gl2 weights}
	Suppose $G = \GL_2/\Q$. Then algebraic dominant weights are pairs $(\lambda_1,\lambda_2) \in \Z^2$ with $\lambda_1 \geq \lambda_2$, corresponding to the character $\mathrm{diag}(t_1,t_2) \mapsto t_1^{\lambda_1}t_2^{\lambda_2}$ of $T$. We can normalise by $\mathrm{det}^{-\lambda_2}$ to give a weight indexed by a single integer $\lambda = (k,0) = (\lambda_1-\lambda_2, 0)$. A ($p$-refined) automorphic representation $\refpi$ of weight $(k,0)$ corresponds to a ($p$-refined) weight $k+2$ modular eigenform $f$, and the locally symmetric spaces are modular curves. The representation $V_{(k,0)}$ is the space of polynomials of degree at most $k$; in this case, cuspidal modular forms contribute to the cohomology $\hc{i}$ only in degree 1.
	
	The rationality in this case is a consequence of strong multiplicity one. The Hecke eigenspaces $\hc{1}(Y_K,\sV_\lambda^\vee(L))^\pm[\refpi]$ -- where the Hecke operators act as they do on $f$, and where $\smallmatrd{-1}{0}{0}{1}$ acts as $\pm1$ -- are one-dimensional for $L = \C,\overline{\Q}$. We have a natural decomposition $\hc{1} = (\hc{1})^+ \oplus (\hc{1})^-$, with associated projectors $\mathrm{pr}^\pm$, and the periods $\Omega_f^\pm$ measure the difference between:
	\begin{enumerate}\setlength{\itemsep}{1pt}
		\item the canonical classes $\phi_{\refpi,\C}^\pm \defeq \mathrm{pr}^\pm(\phi_{\refpi,\C})$ attached to a normalised eigenform,
		\item and (a choice of) generators $\phi_{\refpi,\overline{\Q}}^\pm$ for the line $\hc{1}(Y_K,\sV_\lambda^\vee(\overline{\Q}))^\pm[\refpi].$
	\end{enumerate}
	We then define $\phi_{\refpi}^\pm = \phi_{\refpi,\overline{\Q}}^\pm = \phi_{\refpi,\C}^\pm/\Omega_\pi^\pm$, and $\phi_{\refpi} \defeq \phi_{\refpi}^+ + \phi_{\refpi}^-.$ Note the periods are only well-defined up to algebraic scaling, depending on the choice of algebraic generators.
	
\end{example}

\begin{example}
	If $G = \mathrm{Res}_{E/\Q}\GL_2$ where $E$ is an imaginary quadratic field, then we are in the setting of \emph{Bianchi} modular forms, and such forms contribute to both $\hc{1}$ and $\hc{2}$. There are no signs to consider here, but we do get periods $\Omega_\pi^1$ and $\Omega_\pi^2$ arising from degree 1 and 2 cohomology respectively. These periods are expected to be related, but genuinely different: for example, \cite[Conj.\ 1]{TU18} conjectures a precise relation between $\Omega_\pi^1$ and $\Omega_\pi^2$ for base-change Bianchi modular forms.
\end{example}

\subsection{Evaluation maps}\label{sec:abstract evaluation}
The horizontal rows of Figure \ref{main diagram} are given by \emph{evaluation maps}. We describe one general strategy for constructing them and highlight areas of subtlety. 

\begin{lemma}\label{lem:top degree trivial}
	Let $X$ be a connected orientable real manifold of dimension $d$, and let $\sN$ be a trivial local system (that is, there exists a module $N$ such that $\sN$ is given by locally constant sections of the projection $X\times N \rightarrow X$). Then 
	\[
	\hc{d}(X,\sN) \cong N,
	\]
	the isomorphism given by pairing against some generator of $\h_d^{\mathrm{BM}}(X,\Z) \cong \Z$.
\end{lemma}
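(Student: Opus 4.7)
The plan is to invoke Poincaré--Lefschetz duality for oriented manifolds, together with the triviality of the local system $\sN$. Since $X$ is connected and orientable of dimension $d$, there is a canonical fundamental class $[X] \in \h_d^{\mathrm{BM}}(X,\Z)$, and this class is a generator of $\h_d^{\mathrm{BM}}(X,\Z) \cong \Z$. This is exactly the generator referenced in the statement, so the pairing map will be cap product with $[X]$.

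Next, I would apply the cap product form of Poincaré--Lefschetz duality with twisted coefficients: for any local system $\sM$ on an oriented $d$-manifold $X$, cap product with the fundamental class gives an isomorphism
\[
\hc{k}(X,\sM) \xrightarrow{\ \cap [X]\ } \h_{d-k}(X,\sM),
\]
(see, for instance, Bredon or Iversen on sheaves for this version with coefficients in a local system on a topological manifold). Specialising to $k = d$ and $\sM = \sN$ yields
\[
\hc{d}(X,\sN) \;\cong\; \h_0(X,\sN).
\]

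Finally, since $\sN$ is the trivial local system with stalk $N$, and since $X$ is connected, we have $\h_0(X,\sN) \cong N$ canonically (as $\h_0$ of a connected space with constant coefficients is just the coefficient module). Composing these two isomorphisms gives the desired identification $\hc{d}(X,\sN) \cong N$, and by construction it is realised by cap product against the chosen generator of $\h_d^{\mathrm{BM}}(X,\Z)$.

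There is no real obstacle here; the only mild care needed is to use the correct version of Poincaré duality, namely the one relating compactly supported cohomology to ordinary (not Borel--Moore) homology, with twisted coefficients on a non-compact orientable manifold. Triviality of $\sN$ is not strictly essential for the isomorphism $\hc{d}(X,\sN) \cong \h_0(X,\sN)$, but it is used to identify the right-hand side with $N$; in the non-trivial case one would instead obtain the coinvariants of $N$ under the monodromy representation of $\pi_1(X)$.
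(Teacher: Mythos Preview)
Your proof is correct. The paper itself states this lemma without proof, treating it as a standard fact; your argument via Poincar\'e--Lefschetz duality (cap product with the Borel--Moore fundamental class to identify $\hc{d}(X,\sN)\cong \h_0(X,\sN)$, followed by the observation that $\h_0$ of a connected space with constant coefficients is $N$) is exactly the expected justification.
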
 

Let $M$ be a right $K$-module, and consider the following construction strategy.
\begin{construction}\label{cons:eval}
	\begin{enumerate}\setlength{\itemsep}{0pt}
		\item Find a reductive group $H$ with an embedding $\iota : H \hookrightarrow G$, and locally symmetric spaces $X_c$ attached to $H$ that have dimension
		$d \in [q,q+\ell]$ and `conductor' $c$.
		\item By considering appropriate twists of $\iota$ and $\sM$ by `conductor $c$', construct a map
		\[
		\widetilde{\iota}_c : \hc{d}(Y_K,\sM) \longrightarrow \hc{d}(X_c,\iota^*\sM).
		\]
		
		\item Decompose $X_c = \bigsqcup_a X_c^a$ over its group $\pi_0(X_c)$ of connected components, which will usually be a narrow ray class group. Each $X_{c,a} = \Gamma_{c,a}\backslash \cX$ is a quotient of a symmetric space by an arithmetic group. Thus we obtain
		\[
		\hc{d}(X_c,\iota^*\sM) \cong \bigoplus_a \hc{d}(\Gamma_{c,a}\backslash\cX,\iota_{x_a}^*\iota^*\sM),
		\]
		where $\iota_{x_a}$ is the inclusion $\Gamma_{c,a}\backslash\cX \hookrightarrow X_c$, which -- along with $\Gamma_{c,a}$ -- will depend on a choice of adelic representative $x_a$ of $a$.
		
		\item We consider the $\Gamma_{c,a}$-coinvariants of $M$ denoted by $M_{\Gamma_{c,a}}$, and the quotient map $M \rightarrow M_{\Gamma_{c,a}}$. This then gives a trivial local system $\sM_{c,a}$ on $\Gamma_{c,a}\backslash \cX$. Pushing forward under this gives a map
		\[
		\mathrm{coinv}_{c,a} : \hc{d}(\Gamma_{c,a}\backslash\cX, \iota_{x_a}^*\iota^*\sM) \rightarrow \hc{d}(\Gamma_{c,a}\backslash\cX,\sM_{c,a}).
		\]
		
		\item Integrate against a fixed choice of cycle as in Lemma \ref{lem:top degree trivial}, giving an isomorphism
		\[
		\mathrm{triv} : \hc{d}(\Gamma_{c,a}\backslash\cX,\sM_{c,a}) \isorightarrow M_ {\Gamma_{c,a}}.
		\]
	\end{enumerate}
\end{construction}

The evaluation map is then the composition
\[
\mathrm{Ev}_{c,x_a}^M : \hc{d}(Y_K, \sM) \longrightarrow M_{\Gamma_{c,a}}
\]
given by $\mathrm{triv}\circ\mathrm{coinv}_{c,j} \circ \iota_{x_a}^* \circ \widetilde{\iota}_c.$ \emph{A priori} each \emph{individual} evaluation map will depend, in a controlled way, on the choice of representatives. In the applications, this dependence ultimately disappears (see Remark \ref{rem:independent}).

\begin{remark}\label{rem:covariance}
	Where this process can be made to work, it is covariant in the local system $\sM$; that is, if $\nu : M \rightarrow N$ is a map of $K$-modules, then $\nu$ descends to a map between the coinvariants, and we have $\nu\circ\mathrm{Ev}^M_{c,x_a} = \mathrm{Ev}_{c,x_a}^N\circ \nu_*.$
\end{remark}

\begin{example}
	In the case of modular forms, i.e.\ $G = \GL_2/\Q$, we have $Y_K = \Gamma_K\backslash \uhp$. Let $a, c \in \Z$, and $X_{c,a}'$ be the vertical path between the cusps $a/c$ and $\infty$ in $\uhp$.  We write $X_{c,a}$ for the image of $X_{c,a}'$ in $Y_K$. The component group is $(\Z/c)^\times \cong \cl_{\Q}^+(c)$ (the narrow ray class group of conductor $c$), and the cycle $X_c$ is the collection $\bigsqcup_{a\in(\Z/c)^\times} X_{c,a}$. In this case the relevant cohomology group is $\mathrm{H}^1_{\mathrm{c}}$, which is isomorphic to the more concrete space of modular symbols (e.g. \cite{PS11}); then it can be shown that $M = M_{c,a}$, and $\mathrm{Ev}_{c,x_a}^M$ is the same as evaluating a modular symbol at the pair of cusps $\{a/c - \infty\}$. 
\end{example}

\begin{remark}
	Sometimes the space $X_c$ will not be a typical locally symmetric space attached to $H$. In particular if the map $\iota : H \hookrightarrow G$ does not map the centre $Z_H$ into $Z_G$, then it does not induce an honest map on locally symmetric spaces, and instead we should consider quotients by $\iota^{-1}(Z_G)$. Additionally this can help the spaces have the correct dimension; for example, consider the case $H = \GL_n \times \GL_n \subset \GL_{2n}$. The locally symmetric spaces attached to $H$ are defined using a quotient by the centre of $H$; in the case $n=1$, this gives a 0-dimensional space, instead of the 1-dimensional space we need. Modifying to instead quotient by the centre of $G$, we get spaces of the correct dimension (see \cite{GR2}).
\end{remark}

\begin{remark}
	Whilst we have highlighted this particular approach to constructing evaluation maps, there are alternatives in the literature; and in particular, there are many cases where such maps can be constructed at classical level by pairing with Eisenstein cohomology classes. The \emph{Rankin--Selberg method} is a useful tool for relating such evaluations to critical $L$-values. It would be very interesting to construct analogues of these evaluation maps for overconvergent cohomology, which would give new constructions of many existing $p$-adic $L$-functions in this framework. In the spirit of this survey, it would also allow clean generalisations of these constructions to non-ordinary settings (via \S\ref{ss: survey overconvergent cohomology}) and variation in $p$-adic families (via \S\ref{sec:top row II}).
\end{remark}

\subsection{Bottom row: classical computations}
\label{sec:bottom row}
In the bottom row, we consider evaluation maps for $M = V_\lambda^\vee$. For a critical Hecke character $\varphi$, the aim is to construct maps
\begin{align*}
	\mathrm{Ev}_\varphi : \hc{i}(Y_K,\sV_\lambda^\vee) &\longrightarrow \overline{\Q}_p\\
	\phi_{\refpi} &\longmapsto (*)\Lambda(\pi,\varphi)/\Omega_{\pi}^\epsilon,
\end{align*}
giving the required connection to classical critical $L$-values. Here $(*)$ is an explicit interpolation factor that will depend on the choice of $p$-refinement $f \in \pi$ to which we attach $\phi_{\refpi}$ (in particular, it will contain Euler factors for each prime above $p$ at which $f$ is an oldform).

Crucial to this construction is the existence of a distinguished vector $v_j \in V_\lambda$ attached to the critical infinity type $j$ of $\varphi$. Evaluation at $v_j$ gives a map $\mathrm{ev}_{v_j}: V_\lambda^\vee(\overline{\Q}_p) \to \overline{\Q}_p$ of $K$-modules for an appropriate action of $K$ on $\overline{\Q}_p$ (typically by a determinant character depending on $j$). If $\varphi$ has conductor $c$, then via a determinant map on $H$, we can usually interpret it as a character on the component group $\pi_0(X_c)$. We can then define an expression of the form
\[
\mathrm{Ev}_\varphi \defeq \sum_{a \in \pi_0(X_c)} \varphi(x_a) \left[\mathrm{ev}_{v_j}\circ \mathrm{Ev}^{V_{\lambda}^\vee}_{c,x_a}\right].
\]
This should now be independent of the choice of representatives $x_a$. One then hopes to relate $\mathrm{Ev}_\varphi(\phi_{\refpi})$ to a period integral computing critical $L$-values of $\pi$. In order to have a hope of executing this strategy it is crucial to have algebraicity results for critical $L$-values of $\pi$ based upon period integrals over a group $H\subset G$, which do not involve an Eisenstein series; for example, in the case of $G = \GL_n \times \GL_{n-1}$, where critical $L$-values can be computed from diagonally embedding $H = \GL_{n-1}$. One should then be able to construct evaluation maps based on cycles obtained from $H$, and via the overconvergent cohomology approach, be able to recover/generalise results of Januszewski \cite{Jan11,Jan15}.

\begin{example}
	In the case of classical modular forms, $V_\lambda = V_k$ is a space of polynomials of degree at most $k$, and the critical infinity types are $0,...,k$. The distinguished vector $v_j$ is the monomial $x^j$. If $f$ is a classical modular form of weight $k+2$, then the coefficients of $\mathrm{Ev}_{c,x_a}^{V_k^\vee}(\phi_f)$ relate to period integrals of $f$ over the paths $a/c \to \infty$, and it is a standard computation that a $\chi$-weighted sum of such period integrals computes the twisted $L$-values $L(f,\chi,j+1)$ for $0\leq j\leq k$ (for example, see \cite[\S2.8]{Pol11}).
\end{example}

\subsection{Middle row: overconvergent cohomology}\label{ss: survey overconvergent cohomology}
Suppose now we have constructed abstract evaluation maps as in \S\ref{sec:abstract evaluation} and used them to construct the bottom row [B] of Figure \ref{main diagram}, as outlined in \S\ref{sec:bottom row}. 

\subsubsection{Definitions and strategy}

A $p$-adic $L$-function should $p$-adically interpolate the critical $L$-values in the right-hand side of row [B]. In order to do this, a beautiful insight of Stevens \cite{Ste94}, later explored in joint work with Pollack \cite{PS11}, was that we could instead try to $p$-adically interpolate the evaluation maps in row [B]. This requires his theory of \emph{overconvergent cohomology}, comprising the left-hand side of Figure \ref{main diagram}. The key idea is to replace the finite-dimensional algebraic module $V_\lambda$ of polynomial functions with an infinite dimensional module $\cA_\lambda$ of analytic functions.

Such interpolation of coefficients is only possible after adding $p$ to the level; this is required to define an action of the level group on $\cA_\lambda$, and hence to obtain attached local systems. Recall the definition of $I_p$ from \S\ref{sec:set-up}.

\begin{definition}
	Let $L/\Qp$ be a field extension, and define the \emph{locally analytic induction}
	\begin{align*}
		\cA_\lambda(L) &\defeq \mathrm{LA}\cdot\mathrm{Ind}_{B^-(\Zp)\cap I_p}^{I_p} \lambda\\
		&= \{\theta : I_p \rightarrow L :\theta\text{ locally analytic, } \theta(n^{-}tg) = \lambda(t)\theta(g) \ \ \forall n^-,t,g\}.
	\end{align*}
	Then define 
	\[
	\cD_\lambda(L) \defeq \mathrm{Hom}_{\mathrm{cts}}(\cA(L),L)
	\]
	to be the continuous dual of \emph{locally analytic distributions} for $G$.
\end{definition}

This definition is explored in more detail in \cite[\S3.2]{Urb11}. Again, these functions are determined by their values on the unipotent subgroup $N(\Zp)$, and we require the restriction to this to be locally analytic (after identifying $N(\Zp)$ with an open subspace of $\Qp^r$ for some $r$).

\begin{example} 
	For $\GL_2/\Q$, the unipotent $N(\Zp)$ is the group of $\smallmatrd{1}{x}{0}{1}$ with $x \in \Zp$, so the definition reduces to functions $\theta : \Zp \to L$. Such a $\theta$ is \emph{locally analytic} if for every $x \in \Zp$, there exists an $n$ such that $\theta$ can be written as a convergent power series on $x + p^n\Zp$.
\end{example}

The space $\cA_\lambda(L)$ can be equipped with a left action of $I_p$ by $\gamma \cdot \theta(g) = \theta(g\gamma)$. For level subgroups $K$ with $K_p \subset I_p$, it is thus a left $K$-module. The dual $\cD_\lambda(L)$ inherits a right $K$-module structure, and we get a corresponding local system $\sD_\lambda(L)$.

We have a natural inclusion $V_\lambda(L) \subset \cA_\lambda(L)$; in the case of $\GL_2/\Q$, this can be seen as polynomial functions are in particular locally analytic. This dualises to a surjection
\[
\cD_\lambda(L) \longrightarrow V_\lambda^\vee(L).
\]
This induces the map
\[
\rho : \hc{i}(Y_K,\sD_\lambda(L)) \longrightarrow \hc{i}(Y_K, \sV_\lambda^\vee(L))
\]
in Figure \ref{main diagram}. This is much closer to our goal, to obtain a distribution on $p$-adic Hecke characters: by definition, the coefficient modules are now distributions on $p$-adic functions. Let $d$ be the degree of cohomology for which we have evaluation maps as in \S\ref{sec:abstract evaluation}. The strategy now becomes:
\begin{strategy}\label{strategy}
	\begin{itemize}\setlength{\itemsep}{0pt}
		\item[(1)] Exhibit a distinguished class $\Phi_{\refpi} \in \hc{d}(Y_K,\sD_\lambda)$ such that $\rho(\Phi_{\refpi}) = \phi_{\refpi}$;
		\item[(2)]  Let $c$ be an ideal with $(p)|c|(p^\infty)$. Use the evaluation maps of \ref{sec:abstract evaluation} to build a map
		\[
		\mathrm{Ev}^c_\lambda : \hc{d}(Y_K,\sD_\lambda) \longrightarrow \cD(\mathrm{Gal}_p(E),\overline{\Q}_p)
		\]
		such that for any critical Hecke character $\varphi$ of conductor $c$, we have $\mathrm{ev}_{\varphi_{(p)}} \circ \mathrm{Ev}_\lambda^c = \mathrm{Ev}_\varphi \circ \rho$ as maps $\hc{d}(Y_K,\sD_\lambda) \to \overline{\Q}_p$. Here $\varphi_{(p)}$ is the character on $\mathrm{Gal}_p(E)$ corresponding to $\varphi$.
		
		\item[(3)] Prove that the $\mathrm{Ev}_\lambda^c$ are compatible in changing $c$: typically this will take the form $\mathrm{Ev}_\lambda^{c\pri} = \mathrm{Ev}_\lambda^c \circ U_{\pri}$. This allows us to make sense of `$\mathrm{Ev}_\lambda \defeq U_c^{-1} \mathrm{Ev}_\lambda^c$' on finite-slope eigenclasses, which is then independent of $c$. (This is row [M] of Figure \ref{main diagram}).
	\end{itemize}
\end{strategy}

Given the above, we then define $L_p(\refpi) \defeq \alpha_p^{-1}\mathrm{Ev}^{(p)}_\lambda(\Phi_{\refpi})$ where $\alpha_p$ is the eigenvalue of $U_p$. We see that, for $\varphi$ of conductor $c$, with $(p)|c|(p^\infty)$, we have
\begin{align*}
	L_p(\refpi,\varphi_{(p)}) = \alpha_c^{-1} \mathrm{Ev}^c_\lambda(\Phi_{\refpi})(\varphi_{(p)}) &= \alpha_c^{-1}\mathrm{Ev}_\varphi \circ \rho(\Phi_p)\\
	& = \alpha_c^{-1} \mathrm{Ev}_\varphi(\phi_{\refpi}) = (*)\Lambda(\pi,\varphi)/\Omega_\pi^\epsilon,
\end{align*}
that is, $L_p(\refpi) \in \cD(\mathrm{Gal}_p(E),\overline{\Q}_p)$ interpolates the algebraic parts of critical $L$-values.

\subsubsection{Canonical lifts: the control theorem}\label{sec:control}

The overconvergent class $\Phi_{\refpi}$ is given by a cohomological analogue of Coleman's control theorem for overconvergent modular forms. This says that $\rho$ can be controlled after restricting to particular eigenspaces of certain operators. 

\begin{definition}\label{def:controlling}
	Let $t \in T(\Qp)$ such that $t^{-1}N(\Zp)t \subset N(\Zp)$. Conjugation of $N(\Zp)$ by $t$ induces an action on distributions, and we obtain a double coset operator $U_t \defeq [KtK]$ on overconvergent cohomology. We say $U_t$ is a \emph{controlling operator} if 
	\[
	\bigcap_{i \geq 0} t^{-i} N(\Zp) t^{i} = \{1\},
	\]
	in which case $U_t$ acts compactly on overconvergent cohomology \cite[Lem.\ 3.2.8]{Urb11}.
\end{definition}
\begin{example}
	In the case $G= \mathrm{Res}_{E/\Q}\GL_n$ for $E$ a number field, the matrix $t = \mathrm{diag}(1,p,..., p^{n-1})$ satisfies this condition, and gives rise to the (controlling) Hecke operator $U_p = U_t$.
\end{example}

\begin{theorem}\label{thm:control}
	Let $U_t$ be a controlling operator. There exists $C_\lambda(t) > 0$ such that, for all $\alpha \in L^\times$ with $v_p(\alpha) < C_\lambda(t)$ and $i \in \N$, the restriction of $\rho$ to the generalised $\alpha$-eigenspaces 
	\[
	\rho : \hc{i}(Y_K,\sD_\lambda)^{U_t = \alpha} \isorightarrow \hc{i}(Y_K,\sV_\lambda^\vee)^{U_t = \alpha}
	\]
	of the $U_t$-operator is an isomorphism.
\end{theorem}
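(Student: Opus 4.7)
\medskip

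\noindent\textbf{Proof proposal.} The strategy is the standard one going back to Stevens and Ash--Stevens \cite{AS08,PS11,Urb11}: reduce the control theorem to a slope bound on the kernel of the specialisation map, and then read off that slope bound from an explicit filtration of $\cD_\lambda$ that is stable under $U_t$.

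First, I would record the short exact sequence of $I_p$-modules
\[
0 \longrightarrow N_\lambda \longrightarrow \cD_\lambda(L) \xrightarrow{\ \rho\ } V_\lambda^\vee(L) \longrightarrow 0,
\]
obtained by dualising the natural inclusion $V_\lambda(L) \hookrightarrow \cA_\lambda(L)$ of algebraic functions into locally analytic ones, with $N_\lambda$ the dual of the quotient $\cA_\lambda/V_\lambda$. Each term is a right $K$-module (via $K_p\subset I_p$), so this induces a short exact sequence of local systems on $Y_K$ and hence a long exact sequence in compactly supported cohomology compatible with the $U_t$-action. Since $U_t$ is controlling, by Urban's compactness result it acts compactly on $\hc{i}(Y_K,\sD_\lambda)$ and on $\hc{i}(Y_K,\sN_\lambda)$, so Riesz theory gives slope decompositions. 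It therefore suffices to produce a constant $C_\lambda(t)>0$ such that $U_t$ acts on $\hc{*}(Y_K,\sN_\lambda)$ with every slope $\geq C_\lambda(t)$: the long exact sequence then forces $\rho$ to induce an isomorphism on generalised $\alpha$-eigenspaces with $v_p(\alpha)<C_\lambda(t)$.

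The core of the argument, and the main obstacle, is the slope estimate on $N_\lambda$. Using restriction to $N(\Zp)$ and the Iwahori decomposition, a class $\theta \in \cA_\lambda$ is determined by its restriction to $N(\Zp)$, which after identifying $N(\Zp)$ with a polydisc in $\Q_p^r$ admits a Taylor expansion. The polynomial functions $V_\lambda$ correspond to a finite-dimensional subspace given by low-degree monomials cut out by the highest-weight theory for $\lambda$, so $\cA_\lambda/V_\lambda$ carries a decreasing filtration by order of vanishing, whose graded pieces are homogeneous polynomial modules. The hypothesis $t^{-1}N(\Zp)t \subset N(\Zp)$ with trivial intersection in Definition \ref{def:controlling} means that conjugation by $t$ strictly contracts the coordinates on $N(\Zp)$; combined with the weight character $\lambda$, a direct computation shows that $U_t$ acts on the degree-$n$ graded piece of $N_\lambda$ with $p$-adic valuation bounded below by $\lambda(t) + n\cdot v_t$ for an explicit positive constant $v_t$ depending only on how $t$ contracts $N(\Zp)$. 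Taking $C_\lambda(t)$ to be the minimum over $n\geq 1$ of this quantity (suitably translated so that the subspace $V_\lambda \subset \cA_\lambda$, corresponding to low-degree monomials not in the kernel, is excluded) yields the desired bound on $\cA_\lambda/V_\lambda$ and hence, by duality, on $N_\lambda$.

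Finally, I would transfer this coefficient-level slope bound to cohomology. Since the filtration on $\cA_\lambda/V_\lambda$ is by closed $I_p$-stable subspaces with compact transition maps, dualising gives a filtration of $N_\lambda$ whose graded pieces inherit the slope estimate. By a standard spectral sequence / five-lemma argument applied level by level, the slopes of $U_t$ on $\hc{*}(Y_K, \sN_\lambda)$ are bounded below by $C_\lambda(t)$. Substituting into the long exact sequence associated to $\rho$ gives the claimed isomorphism. The most delicate point is packaging the contraction of $t$ into the precise optimal constant $C_\lambda(t)$; the bound itself is soft, but getting an explicit, sharp constant (as needed later for the small-slope criterion in Theorem \ref{thm:small-slope}) requires a careful highest-weight-theoretic bookkeeping of which monomials lie in $V_\lambda$.
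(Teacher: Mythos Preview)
The paper does not actually prove this theorem: it is stated in the survey portion, and immediately after the statement the authors simply write that ``precise versions of this theorem in the general setting (though for singular cohomology) are given in \cite{AS08} and \cite{Urb11}.'' Your sketch is the standard Ash--Stevens/Urban argument from those references --- the short exact sequence $0\to N_\lambda\to\cD_\lambda\to V_\lambda^\vee\to 0$, the filtration on $\cA_\lambda/V_\lambda$ by order of vanishing on $N(\Zp)$, and the slope estimate coming from the contracting action of $t$ --- so it is correct and agrees with what the cited sources do.

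One small caution on presentation: the constant $C_\lambda(t)$ is obtained not as ``$\lambda(t)$ plus something positive'' but rather by comparing the normalised $*$-action of $t$ (which includes a twist by $\lambda(t)^{-1}$ to make $U_t$ integral) on the graded pieces outside $V_\lambda$; the minimum valuation there is governed by the simple roots paired with $t$ and the dominant weight $\lambda$. Your bookkeeping should make this normalisation explicit, otherwise the formula for $C_\lambda(t)$ (and hence the small-slope bound \eqref{eqn:small slope}) will be off by $v_p(\lambda(t))$.
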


In practice, this means that a unique overconvergent lift $\Phi_{\refpi}$ of $\phi_{\refpi}$ is guaranteed by a simple numerical criterion on a Hecke eigenvalue at $p$. Precise versions of this theorem in the general setting (though for singular cohomology) are given in \cite{AS08} and \cite{Urb11}.

\begin{example}
	For $\GL_2/\Q$ and $\lambda = (k,0)$, and controlling operator $U_p$, we have $C_\lambda = k+1$. Thus for any normalised weight $k+2$ cuspidal eigenform $f$ with $v_p(a_p(f)) < k+1$, the restriction of $\rho$ to the $f$-eigenspaces is an isomorphism.
\end{example}

\begin{remarks}\label{rem:multi}
	\begin{itemize}\setlength{\itemsep}{0pt}
		\item[(i)] This theorem implies that if the $U_t$-eigenvalues of $\refpi$ satisfy $v_p(\alpha) < C_\lambda(t)$, then the restriction of $\rho$ to the $\tilde{\pi}$-eigenspaces is an isomorphism. The converse is not true. Even in the case of $\GL_2/\Q$, there are examples of modular forms with $v_p(a_p(f)) = k+1$ such that the restriction of $\rho$ to the $f$-eigenspaces is an isomorphism (see \cite{PS12,Bel12}).
		\item[(ii)] Stronger versions of this theorem are sometimes available. If we can factor the $U_p$ operator into products of `smaller' operators, we can lift with respect to these instead. For example, if $E/\Q$ is an imaginary quadratic field in which $p$ splits as $\pri\overline{\pri}$, then we get an isomorphism when $v_p(a_{\pri}(f)), v_p(a_{\overline{\pri}}(f)) < k+1$ (see \cite[\S6]{Wil17}).
	\end{itemize}
\end{remarks}

\subsubsection{Galois evaluation maps}
\label{sec:galois evaluations}
To enact part (2) of Strategy \ref{strategy}, we study $\mathrm{Gal}_p(E)$ more closely. By class field theory, it admits as a dense subgroup the narrow ray class group 
\[
\cl_E^+(p^\infty) \defeq E^\times\backslash\A_E^\times/E_\infty^+\widehat{\cO}_E^{\times,(p)},
\]
where $E_\infty^+$ is the connected component of the identity in $(E\otimes\R)^\times$ and $\widehat{\cO}_E^{\times,(p)} = \prod_{v\nmid p}\cO_{E_v}^\times$ are the integral ideles away from $p$. For any ideal $c|(p^\infty)$ in $\cO_E$, this fits into an exact sequence
\[
1 \to \frac{(1 + c\cO_p)}{\overline{\cO}_{E,c}^{\times,+}}\longrightarrow \cl_E^+(p^\infty) \xrightarrow{\mathrm{pr}}\cl_E^+(c) \to 1,
\]
where $\cO_p = \cO_E\otimes\Zp$, $\overline{\cO}_{E,c}^{\times,+}$ is the $p$-adic completion of $\{\epsilon \in \cO_E^{\times,+} : \epsilon \equiv 1 \newmod{c}\}$ and $\cl_E^+(c)$ is the narrow ray class group of $E$ of conductor $c$. In particular, to each $y \in \cl_E^+(c)$, we get a corresponding subspace
\[
\mathrm{Gal}_p[y] \defeq \mathrm{pr}^{-1}(\{y\}) \subset \mathrm{Gal}_p(E),
\]
and $\mathrm{Gal}_p(E) = \sqcup_{y} \mathrm{Gal}_p[y].$ In particular, for each such $c$ we also have a decomposition
\[
\cD(\mathrm{Gal}_p(E),L) \cong \bigoplus_{y\in \cl_E^+(c)} \cD(\mathrm{Gal}_p[y], L),
\]
where if $\cG$ is a $p$-adic group, we write $\cD(\cG,L)$ for the space of $L$-valued locally analytic distributions on $\cG$. The importance of this decomposition stems from the observation that in all known cases where this construction has been carried out, for conductors $c$ with $(p)|c|(p^\infty)$, there is a surjection
\[
\mathrm{\xi} : \pi_0(X_c)\longrightarrow \cl_E^+(c).
\] 
One arrives at the following `piecewise' construction of the Galois evaluation map.
\begin{strategy}
	For each $y \in \cl_E^+(c),$ construct a map
	\[
	\beta_y: \cA(\mathrm{Gal}_p[y], L) \rightarrow \cA_\lambda(L)
	\]
	which moreover lands in the subspace $\cA_\lambda(L)^{\Gamma_{c,a}}$ of $\Gamma_{c,a}$-invariants, where $\xi(a) = y$.
\end{strategy}
Then, dualising and composing, we get map
\[
\mathrm{Ev}_\lambda^c[y] : \hc{d}(Y_K,\sD_\lambda) \labelrightarrow{\mathrm{Ev}_{c,x_a}^{\cD_\lambda}}\cD_{\lambda}(L)_{\Gamma_{c,a}} \labelrightarrow{\beta_y^\vee} \cD(\mathrm{Gal}_p[y],L),
\]
which we think of as the `$y$-part' of a candidate for $\mathrm{Ev}_\lambda$. One checks that this is independent of any choice of lift of $y$ under $\xi$, and any choices of representatives. We then essentially define 
\[
\mathrm{Ev}_\lambda^c = \sum_y \mathrm{Ev}_\lambda^c[y],
\]
perhaps up to an additional twisted auxiliary sum over $a \in \xi^{-1}(y)$ (for example, required in the case of $\GL_{2n}$, where this map is averaged over a `Shalika character').

\begin{remark}\label{rem:independent}
	The map $\beta_y$ involves an identification of $\Gal_p[y]$ with a subquotient of $\cO_p$, which involves another adelic choice of representative. This cancels out the choice of $x_a$ and renders the map $\mathrm{Ev}_\lambda^c$ independent of these choices.
\end{remark}

\begin{example}
	For $\GL_2/\Q$, we have $\Galp(\Q) = \Zp^\times$, and $\Galp[y] = y + p^c\Zp$. From above $\cA_\lambda$ is the space of locally analytic functions on $\Zp$, and $\Gamma_{c,a}$ is a subset of the totally positive units in $\Q$, hence trivial (so that $\cA_{\lambda} = \cA_{\lambda}^{\Gamma_{c,a}}$). The map $\beta_y : \cA(\Galp[y]) \to \cA_\lambda$ is then just the natural inclusion, and $\beta_y^\vee$ restriction from $\Zp$ to $y + p^c\Zp$.
\end{example}


\subsubsection{Horizontal compatibility and admissibility}

The above construction now gives a family of candidate Galois evaluation maps
\[
\big\{ \mathrm{Ev}_\lambda^c : \hc{d}(Y_K,\cD_\lambda) \longrightarrow \cD(\Gal_p(E),L) \ : \ (p)|c |(p^\infty)\big\}.
\]
We wish to know how this family varies if we vary $c$. Such results (`horizontal compatibility') are obtained using the $U_{\pri}$-operators for $\pri|p$ in $E$. In particular, we typically have:

\begin{expectation}\label{expectation}
	Let $\pri|p$. Then
	\[
	\mathrm{Ev}_\lambda^{c\pri} = \mathrm{Ev}_\lambda^c \circ U_{\pri}
	\]
	as maps $\hc{d}(Y_K,\cD_\lambda) \to \cD(\mathrm{Gal}_p(E),L)$. Thus if $\Phi$ is an eigenclass for the $U_{\pri}$-operators, then
	\[
	\mathrm{Ev}_\lambda(\Phi) := \alpha_c^{-1}\mathrm{Ev}_\lambda^c(\Phi) \in \cD(\mathrm{Gal}_p(E),L)
	\]
	is independent of $c$ (where $\alpha_c$ is the $U_c$-eigenvalue of $\Phi$).
\end{expectation}

This compatibility with $U_{\pri}$-operators also allows control of the growth of the resulting distributions. In particular, in the totally real and imaginary quadratic case, it gives a bound on the growth depending on the $U_{\pri}$-eigenvalues. The precise dependence will vary with the group $G$, but in the case $\GL_2$, we obtain:

\begin{corollary}
	Suppose $G = \mathrm{Res}_{E/\Q}\GL_2$ where $E$ is a totally real or imaginary quadratic field. If $\Phi$ is an eigenclass for the $U_{\pri}$-operators, then $\mathrm{Ev}_\lambda(\Phi)$ is admissible of order depending on the $\alpha_{\pri}$ for $\pri|p$ (in the sense of, for example, the growth parameters of \cite[\S3]{Loe14}).
\end{corollary}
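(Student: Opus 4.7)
The plan is to combine the horizontal compatibility of Expectation \ref{expectation} with a uniform integral bound on the finite-level evaluation maps $\mathrm{Ev}_\lambda^c$.

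Admissibility of $\mu := \mathrm{Ev}_\lambda(\Phi) \in \cD(\mathrm{Gal}_p(E),L)$, in the sense of \cite[\S3]{Loe14}, is a condition on the $p$-adic size of $\mu$ against monomials supported on small neighbourhoods. Under the decomposition $\cD(\mathrm{Gal}_p(E),L) = \bigoplus_{y \in \cl_E^+(c)} \cD(\mathrm{Gal}_p[y],L)$ of \S\ref{sec:galois evaluations}, and writing $\mu_y$ for the corresponding piece, it suffices to bound $|\mu_y(z^{\mathbf{j}})|_p$ for each multi-monomial $z^{\mathbf{j}}$ on $\mathrm{Gal}_p[y]$, uniformly in $y$, in terms of the valuations $h_\pri := v_p(\alpha_\pri)$ and the $p$-adic conductor of $y$. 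The first key step is to fix integral structures: $\Phi$, being a finite-slope $U_t$-eigenclass in overconvergent cohomology, may be rescaled so that a chain-level representative takes values in an $\cO_L$-stable unit ball inside $\cD_\lambda(L)$, via the Banach $\cO_L$-module structure furnished by the slope decomposition. Each map composing $\mathrm{Ev}_\lambda^c$ -- the abstract evaluation $\mathrm{Ev}_{c,x_a}^{\cD_\lambda}$ of Construction \ref{cons:eval} together with the dualised $\beta_y^\vee$ -- is induced from a continuous $K$-equivariant map of the underlying modules. Once the representatives $x_a$ of $\pi_0(X_c)$ are chosen with $p$-integral components and the $\beta_y$ are normalised compatibly with the Iwahori decomposition $I_p = I_p^- T(\Zp) N(\Zp)$, these maps are bounded with operator norm independent of $c$, so $\mathrm{Ev}_\lambda^c(\Phi)$ lies in a fixed bounded sublattice of $\cD(\mathrm{Gal}_p(E),L)$.

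Iterating Expectation \ref{expectation} then yields $\mathrm{Ev}_\lambda^c(\Phi) = \alpha_c \cdot \mathrm{Ev}_\lambda(\Phi)$, where $\alpha_c = \prod_{\pri|p} \alpha_\pri^{n_\pri}$ for $c = \prod \pri^{n_\pri}$. Combined with the previous step, this gives
\[
|\mu_y(z^{\mathbf{j}})|_p \leq C \cdot |\alpha_c|_p^{-1} \cdot \|z^{\mathbf{j}}\|_{\infty}
\]
for $y$ of conductor $c$. Since $\mathrm{Gal}_p[y]$ has radius $p^{-n_\pri}$ in the $\pri$-direction, we have $\|z^{\mathbf{j}}\|_\infty = \prod_{\pri|p} p^{-n_\pri j_\pri}$, and the right-hand side becomes $C \prod_{\pri|p} p^{n_\pri(h_\pri - j_\pri)}$, which is precisely the Amice--V\'elu/Loeffler admissibility estimate of order $(h_\pri)_{\pri|p}$.

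The main obstacle is the uniform integral bound in the second step: both the adelic representatives $x_a$ and the change-of-variable maps $\beta_y$ involve matrix entries of growing $p$-adic valuation as $c$ grows, and making the resulting normalisations cancel in the composition requires a carefully coordinated choice compatible with the Iwahori decomposition. This is the substantive computation carried out in \cite{BW_CJM} (building on \cite{Bar15} in the totally real case and on \cite{Wil17,BW18} in the imaginary quadratic case); it is also where the restriction on $E$ is felt, since only in these settings is the structure of $\mathrm{Gal}_p(E)$ (and the global units entering the $\Gamma_{c,a}$) elementary enough for the calculation to yield a clean bound of the stated form without further input.
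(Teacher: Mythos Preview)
The paper does not give an explicit proof of this corollary; it is a survey statement, with the preceding text simply indicating that the horizontal compatibility of Expectation~\ref{expectation} ``also allows control of the growth of the resulting distributions'' and deferring to \cite{Bar15,Wil17,BW18,BW_CJM} for the computations in each case. Your proposal is a correct and reasonably detailed expansion of exactly this argument: combine the relation $\mathrm{Ev}_\lambda^c(\Phi) = \alpha_c \cdot \mathrm{Ev}_\lambda(\Phi)$ from Expectation~\ref{expectation} with a uniform integral bound on $\mathrm{Ev}_\lambda^c$ to obtain the Amice--V\'elu-type estimate. You also correctly identify both the key technical point (uniform integrality of the evaluation maps as $c$ varies, requiring coordinated choices of representatives and normalisations) and the reason for the restriction on $E$.
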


This then completes Strategy \ref{strategy}, the middle row of Figure \ref{main diagram}, and the construction of the $p$-adic $L$-function at a single weight.

\subsection{Top row, I: formalism of families}\label{sec:families}
The theory of $p$-adic families of automorphic representations is captured geometrically in the study of \emph{eigenvarieties}. As a first step, we encode the variation of weights in $p$-adic geometry, giving a brief survey of the construction of eigenvarieties using overconvergent cohomology, following \cite{AS08,Urb11,Han17}.

\subsubsection{Weight spaces}
Recall that a weight of an automorphic representation of $G(\A)$ is naturally an algebraic character on a maximal torus $T \subset G$, so a \emph{$p$-adic} weight should be a continuous homomorphism $T(\Zp) \to L^\times$, for $L/\Qp$ a field extension. In all of our constructions we must also take the level group into account: let $K$ be as above, and let $\overline{Z}_K$ denote the $p$-adic closure of $Z_{G}(\Q) \cap K$ in $T(\Zp)$. 

\begin{definition}[Weights for $T$]
	Define the \emph{weight space} of level $K$ for $G$ to be the rigid analytic space whose $L$-points, for $L \subset \C_p$ any sufficiently large extension of $\Qp$, are given by
	\[ 
	\cW_K(L) = \mathrm{Hom}_{\mathrm{cts}}\big(T(\Z_p)/\overline{Z}_K,L^{\times}\big).
	\]	
\end{definition}

This space has dimension $\dim T(\Zp) - \dim \overline{Z}_K$. Recall that if $M$ is a $K$-module, then we defined in \eqref{eq:local system} a local system attached to $M$, which is non-trivial only if $Z(G(\Q)) \cap K$ acts trivially; for the modules we later define, this corresponds precisely to the condition that weight characters are trivial on $\overline{Z}_K$. We will freely identify a weight $\lambda \in \cW_K(L)$ with the corresponding character on $T(\Zp)$ that is trivial under $\overline{Z}_K$, and as $K$ will be fixed, we will henceforth drop it from the notation.

\begin{example}
	For $\GL_2/\Q$, the weight of a modular form is an integer $k+2$ and a character $\chi$. This is encoded via the complex Hecke character $z \mapsto |z|^k \chi(z)$, where we lift $\chi$ to the ideles in the usual way. If we instead consider continuous characters $\varphi_p : \Q^\times\backslash\A^\times \to \C_p^\times$, then for topological reasons, $\varphi_p$ must be trivial on $\R_{>0}$ and factors through a finite quotient of $\prod_{\ell\neq p}\Z_{\ell}^\times$. Thus a $p$-adic weight is a finite order twist of a continuous character on $\Zp^\times$. Up to twists by the norm, exactly as in Example \ref{ex:gl2 weights}, we recover a continuous $p$-adic character of $T(\Zp)$. (Note in this case $\overline{Z}_K \subset \{\pm 1\},$ and we recover the familiar fact that the space of modular forms is $0$ if $k$ is odd and $\smallmatrd{-1}{0}{0}{-1}$ is in the level).
\end{example}

Consider an affinoid neighbourhood $\cU = \mathrm{Sp}(\cO(\cU)) \subset \cW$. Points $\lambda \in \cU(L)$ correspond to maximal ideals $\m_\lambda$ of $\cO(\cU)$ with residue field $L$. We have a \emph{universal character} (or \emph{universal weight}) $\lambda_{\cU} : T(\Zp) \to \cO(\cU)^\times$ with the property that for any weight $\lambda \in \cU(L)$, the diagram
\begin{equation}\label{eqn:spec weights}
\begin{tikzcd}
	T(\Zp) \arrow[rrd, "\lambda"'] \arrow[rr, "\lambda_{\cU}"] && \cO(\cU)^\times \arrow[d,"\mathrm{sp}_\lambda"] \\
	&& L^\times
\end{tikzcd}
\end{equation}
commutes, where $\mathrm{sp}_\lambda$ is the natural evaluation map at $\lambda$, that is, reduction modulo $\m_\lambda$. We see that $\lambda_{\cU}$ is interpolating the single weights $\lambda$ over $\cU$.

\subsubsection{Distributions over weight space}
We now want to lift \eqref{eqn:spec weights} to the level of distributions, that is, interpolate the distributions modules $\cD_\lambda$ as $\lambda$ varies over $\cU$. More precisely, we want to define an $\cO(\cU)$-module of distributions $\cD_{\cU}$ over $\cU$ such that at any $\lambda \in \cU(L)$, we have
\[
\cD_{\cU}\otimes_{\lambda} L \defeq \cD_{\cU} \otimes_{\cO(\cU)}\cO(\cU)/\m_\lambda \cong \cD_\lambda(L).
\]
We can do this in a number of equivalent ways. The closest to the above is to consider the character $\lambda_{\cU}$ acting on a free $\cO(\cU)$-module of rank 1, and then define
\[
\cA_{\cU} = \mathrm{LA}\cdot\mathrm{Ind}_{B^-(\Zp)\cap I_p}^{I_p} \lambda_{\cU}.
\]
This is an $\cO(\cU)$-module which specialises to $\cA_{\lambda}$ upon reduction modulo $\m_\lambda$ by definition. The module $\cD_{\cU}$ satisfying the required property is essentially the continuous $\cO(\cU)$-dual of $\cA_{\cU}$ (see \cite[\S3]{Urb11} or \cite[\S2]{Han17}).

\subsubsection{Slope decompositions}
Again, for an open compact subgroup $K \subset G(\A_f)$ with $K_p \subset I_p$, the action of $I_p$ gives a local system $\sD_{\cU}$ on $Y_K$. The cohomology groups $\hc{i}(Y_K,\sD_{\cU})$ will not in general be finitely generated over $\cO(\cU)$. To use commutative algebra techniques, we wish to cut them down by taking \emph{slope decompositions}. Let $U_p$ be a controlling operator; since this acts compactly on the cohomology, after possibly shrinking $\cU$, for any $h \in \Q_{\geq 0}$ there exists a \emph{slope $\leq h$ decomposition} 
\[
\hc{i}(Y_K,\sD_{\cU}) = \hc{i}(Y_K,\sD_{\cU})^{\leq h} \oplus \hc{i}(Y_K,\sD_{\cU})^{> h}
\] 
of the cohomology, where $\hc{i}(Y_K,\sD_{\cU})^{\leq h}$ is a finitely generated $\cO(\cU)$-module. This is the subspace upon which $U_p$ acts with eigenvalues $\alpha$ all of slope $v_p(\alpha)\leq h$. In particular, if $h \geq v_p(\alpha_p)$, where $\alpha_p$ is the $U_p$ eigenvalue of our class $\phi_{\refpi}$, then all of the interesting cohomology at $\refpi$ is contained in the slope $\leq h$ cohomology. Importantly, the slope $\leq h$ decomposition is preserved by the Hecke operators. For further details, see \cite{AS08}.

For the rest of the paper, we will always assume $\cU$ is small enough to define a slope $\leq h$ decomposition for some $h$ large relative to the eigenvalues of $U_p$.

\subsubsection{Hecke algebras}
Let $\bH_{K}$ denote the abstract Hecke algebra of level $K$. This is the free $\Z$-algebra generated by an appropriate set of Hecke operators, which typically includes all Hecke operators $T_v$ at `good' primes not dividing the level, diamond operators at bad primes not dividing $p$, and a collection of $U_p$ operators at $p$. These Hecke operators are defined using correspondences on the cohomology. For precise details in the general setting, see \cite[\S1.1]{Han17}. Systems of Hecke eigenvalues correspond to non-trivial characters $\bH_K \to \overline{\Q}_p$, that is, to maximal ideals $\m \subset \bH_K\otimes_{\Z} \overline{\Q}_p$. 

\begin{example} \label{ex:f alpha}
	For $\GL_2/\Q$, we have that $\bH_{\Gamma_0(Np)}$ is the free $\Z$-algebra generated by $T_q$ for $q\nmid Np$, $\langle u\rangle$ for $u \in (\Z/N)^\times$ and $U_p$. If $f(z) = \sum_{n\geq 1}a_nq^n \in S_k(\Gamma_0(N))$ is a normalised newform with $p\nmid N$, let $\alpha \neq \beta$ be the (assumed distinct) roots of the Hecke polynomial $X^2 - a_pX + p^{k-1}$. Let $\Q(f_\alpha)$ be the number field generated by the $a_n$ and $\alpha$. The $p$-stabilisation of $f$ at $\alpha$ is $f_\alpha \defeq f(z) - \beta f(pz) \in S_k(\Gamma_0(Np))$. This is an eigenform, and we obtain a maximal ideal $\m_{f_{\alpha}} \subset \bH_{\Gamma_0(Np)}\otimes_{\Z}\Q(f_\alpha)$ generated by $T_q - a_q$ for $q\nmid Np$, $\langle u\rangle - 1$ for $u|N$, and $U_p - \alpha$. 
\end{example}

Let $R$ be a commutative ring, and let $\bH_{K,R} \defeq \bH_K \otimes_{\Z}R$. If $M$ is an $R$-module upon which $\bH_K$ acts $R$-linearly, let
\[
\bT_{K}(M) \defeq \text{ Image of }\bH_{K,R}\text{ in End}_{R}\big(M).
\]
If $\m \subset \bH_{K,R}$ is a maximal ideal, we say that $\m$ \emph{occurs in $M$} if $\bH_{K,R}/\m$ has non-trivial image in $\bT_K(M)$; that is, $\m$ induces a maximal ideal in $\bT_K(M)$, and the corresponding system of Hecke eigenvalues appears in $M$. 

\begin{remark}\label{rem:loc}
	An equivalent condition for $\m$ occurring in $M$ is that the algebraic localisation 
	\[
	M_{\m} = M \otimes_{\bH_{K,R}} (\bH_{K,R})_{\m}
	\]
	of $M$ at $\m$ is non-zero, whence it is isomorphic to $M \otimes_{\bT_K(M)}\bT_K(M)_{\m}.$ 
\end{remark}

\begin{example}
	We have actions of the Hecke operators on $\hc{i}(Y_K,\sM)^{\leq h}$ for $\sM = \sV_\lambda^\vee, \sD_\lambda, \sD_{\cU}$. Let  $f$ and $f_\alpha$ be as in the example above, and let $h \geq v_p(\alpha)$. We know that $\m_{f_\alpha}$ (always) occurs in $\hc{1}(Y_K,\sV_\lambda^\vee)^{\leq h}$, through the class $\phi_{f_\alpha}$, which is a Hecke eigenclass with the same eigenvalues as $f_\alpha$. Via Theorem \ref{thm:control} and the example following it, we see that if $v_p(\alpha) < k+1$, it occurs in $\hc{1}(Y_K,\sD_\lambda)^{\leq h}$ through the class $\Phi_{f_{\alpha}}$ lifting $\phi_{f_\alpha}$. (In fact, thanks to work of Bella\"iche, this is also known for $v_p(\alpha) = k+1$, hence for \emph{all} finite slope modular forms). To study the variation of this in families, we want to show that $\m_{f_\alpha}$ also appears in $\hc{1}(Y_K,\sD_{\cU})^{\leq h}$, which we examine in the sequel.
\end{example}

\subsubsection{Local geometry of the eigenvariety}\label{sec:local geometry}
Let $\cU \subset \cW$ be an affinoid subspace, fix the level group $K$ with $K_p \subset I_p$, and let 
\[
\bT_{\cU,h}^i \defeq \bT_{K}\big(\hc{i}(Y_K,\sD_{\cU})^{\leq h}\big)
\]
(where we take $R = \cO(\cU)$). The \emph{local piece of the eigenvariety} (over $\cU,$ of slope $\leq h$) is
\[
\cE_{\cU,h}^i \defeq \mathrm{Sp}(\bT_{\cU,h}^i).
\]
This comes equipped with a natural weight map 
\[
\cE_{\cU,h}^i \longrightarrow \cU
\]
given by the $\cO(\cU)$-structure map on the Hecke algebra. By definition, the $L$-points of $\cE_{\cU,h}^i$ are maximal ideals of $\bT_{\cU,h}^i$, which by the above discussions correspond to systems of Hecke eigenvalues appearing in $\hc{i}(Y_K,\sD_{\cU})^{\leq h}.$ (Whilst not relevant to our applications, we point out that -- as explained in \cite{Han17} -- via the `eigenvariety machine' of \cite{Buz07} these local pieces can be glued together into `global' eigenvarieties, hence the terminology.)

We now return to our fixed automorphic representation $\pi$, with a fixed choice of $p$-refinement $\refpi$ (and corresponding eigenform $f \in \pi$). We fix $K = K(\refpi)$. The Hecke eigenvalues of $f$ give rise to a maximal ideal $\m_{\refpi} \subset \bH_{K,\overline{\Q}_p}$, and it is natural to consider the localised cohomology groups $\hc{i}(Y_K,\sM)_{\m_{\refpi}}$, with $\sM = \sV_{\lambda}^\vee, \sD_\lambda, \sD_{\cU}$ in line with Remark \ref{rem:loc}. To simplify notation, for $M$ a module with an action of the Hecke algebra, we will write
\[
M_{\refpi} \defeq M_{\m_{\refpi}}.
\]
Motivated by Theorem \ref{thm:control}, we are led to the following definition.

\begin{definition}\label{def:non-critical}
	We say $\refpi$ is \emph{non-critical} if the localisation
	\[
	\rho_{\m_{\refpi}} : \hc{i}(Y_K,\sD_{\lambda})_{\refpi} \isorightarrow \hc{i}(Y_K,\sV_{\lambda}^\vee)_{\refpi}
	\]
	is an isomorphism for each $i$.
\end{definition}
In particular, since this is a further localisation of the spaces in Theorem \ref{thm:control}, we see that if $v_p(\alpha) < C_\lambda(t)$ (as in the notation of the theorem), then $\refpi$ is non-critical. 

This definition is extremely useful when studying families. In particular, at non-critical $\refpi$ it gives a complete description of the spaces $\hc{i}(Y_K,\sD_\lambda)_{\refpi}$, from which we can deduce properties of the $\hc{i}(Y_K,\sD_{\cU})_{\refpi}$, and we have the following

\begin{theorem}[Urban, Hansen]
	\label{thm:total cohom}
	Suppose $\refpi$ is non-critical, and $\cU$ a neighbourhood of $\lambda$ in $\cW$. Then there exists an $i$ such that $\m_{\refpi}$ occurs in $\hc{i}(Y_K,\sD_{\cU})$. 
\end{theorem}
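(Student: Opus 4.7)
The strategy is to first exploit non-criticality to reduce to a non-vanishing statement for $\sD_\lambda$-cohomology localised at $\m_\refpi$, and then use a base-change spectral sequence to propagate this non-vanishing upward to $\sD_\cU$-cohomology.

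To begin, I would fix a slope $h \geq v_p(\alpha)$, where $\alpha$ is the $U_t$-eigenvalue of $\refpi$ for a controlling operator $U_t$, so that a slope $\leq h$ decomposition exists over $\cU$ (possibly after shrinking it, as recorded in \S\ref{sec:local geometry}) and the classical Hecke eigenclass $\phi_\refpi \in \hc{i}(Y_K,\sV_\lambda^\vee)$ lives in the slope $\leq h$ part. By \S\ref{sec:classical}, such a class exists for some $i \in [q,q+\ell]$, so $\hc{i}(Y_K,\sV_\lambda^\vee)^{\leq h}_\refpi \neq 0$ for some $i$. Non-criticality (Definition \ref{def:non-critical}) then gives
\[
\hc{i}(Y_K,\sD_\lambda)^{\leq h}_\refpi \;\cong\; \hc{i}(Y_K,\sV_\lambda^\vee)^{\leq h}_\refpi \;\neq\; 0
\]
for some $i$. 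The problem is thus reduced to promoting this non-vanishing at the weight $\lambda$ to non-vanishing of some $\hc{j}(Y_K,\sD_\cU)^{\leq h}_\refpi$.

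For this, I would invoke a base-change spectral sequence of the following shape. The complex $\mathrm{R}\Gamma_c(Y_K,\sD_\cU)^{\leq h}$ is represented by a bounded complex of finitely generated projective $\cO(\cU)$-modules (this is one of the virtues of the slope decomposition; see \cite{AS08,Urb11,Han17}). Deriving $-\otimes_{\cO(\cU)} L$ through this perfect complex, and using the specialisation isomorphism $\sD_\cU \otimes_\lambda L \cong \sD_\lambda(L)$, yields a Hecke-equivariant spectral sequence
\[
E_2^{p,q} \;=\; \mathrm{Tor}^{\cO(\cU)}_{-p}\!\left(\hc{q}(Y_K,\sD_\cU)^{\leq h},\, L\right) \;\Longrightarrow\; \hc{p+q}(Y_K,\sD_\lambda)^{\leq h}.
\]
Since localisation at the maximal ideal $\m_\refpi$ of the Hecke algebra is exact and commutes with $\mathrm{Tor}$, this spectral sequence remains convergent after localising every term at $\m_\refpi$.

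Suppose, for contradiction, that $\m_\refpi$ occurred in no $\hc{j}(Y_K,\sD_\cU)^{\leq h}$, i.e.\ $\hc{j}(Y_K,\sD_\cU)^{\leq h}_\refpi = 0$ for every $j$. Then the entire $E_2$-page of the localised spectral sequence would vanish, forcing $\hc{i}(Y_K,\sD_\lambda)^{\leq h}_\refpi = 0$ for all $i$, contradicting the reduction above. Hence some $\hc{j}(Y_K,\sD_\cU)^{\leq h}_\refpi$ is non-zero, and in particular $\m_\refpi$ occurs in $\hc{j}(Y_K,\sD_\cU)$ for that $j$, as required.

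The only real obstacle is the input from Urban--Hansen used in the second step: namely that overconvergent cohomology with coefficients in $\sD_\cU$ is computed, after slope projection, by a perfect complex of $\cO(\cU)$-modules, and that specialisation at $\lambda$ recovers $\hc{*}(Y_K,\sD_\lambda)^{\leq h}$ via a hyper-Tor spectral sequence. This compatibility — combined with the fact that slope decompositions are Hecke-equivariant and commute with the relevant base change — is established in \cite{AS08,Urb11,Han17}, and I would simply cite it. Once this is in hand, the argument is a one-line application of exactness of localisation together with the non-criticality hypothesis.
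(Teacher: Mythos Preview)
Your argument is correct and aligns with the paper's proof. The paper's own proof simply observes that non-criticality gives $\m_{\refpi}$ in $\hc{\bullet}(Y_K,\sD_\lambda)$, then cites \cite[Thm.~4.3.3]{Han17} as a black box to promote this to $\hc{\bullet}(Y_K,\sD_{\cU})$, and finally notes that Hecke operators preserve degree. Your Tor spectral sequence argument is precisely the mechanism underlying that citation (indeed, the same spectral sequence appears explicitly later in the paper, in Remark~\ref{rem:higher dims}), so you have effectively unpacked the reference rather than taken a different route.
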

\begin{proof}
	Urban treats the case of discrete series in \cite{Urb11}. In general, as $\tilde\pi$ is non-critical, $\m_{\tilde\pi}$ occurs in the total cohomology $\hc{\bullet}(Y_K,\sD_\lambda)$; then by \cite[Thm.\ 4.3.3]{Han17} this yields a point in his (total cohomology) eigenvariety, so that $\m_{\tilde\pi}$ occurs in $\hc{\bullet}(Y_K,\sD_{\cU})$. Since Hecke operators preserve degree of cohomology, it must appear in degree $i$ for some $i$.
\end{proof}

In particular, if we define $\bT^\bullet_{\cU,h} \defeq \bT_K(\hc{\bullet}(Y_K,\sD_{\cU})^{\leq h})$ using the \emph{total} cohomology, then the resulting eigenvariety $\cE_{\cU,h}^\bullet$ contains points for all non-critical $\refpi$ of slope $\leq h$ at $p$.

\subsection{The non-abelian Leopoldt conjecture}
\label{sec:leopoldt}
We pause briefly to describe a conjecture on the dimension of irreducible components, which gives significant control on the overconvergent cohomology in families. Define $t = q+\ell$ to be the top degree in which $\tilde{\pi}$ appears in classical cohomology. The following was conjectured by Hida and Urban (see \cite[Conj.~5.7.3]{Urb11}, \cite[Conj.~1.1.4]{Han17}):

\begin{conjecture}\label{conj:dim}
	Let $x_{\tilde\pi} \in \cE_{\cU,h}^\bullet$ be the point attached to a non-critical cuspidal $\tilde\pi$ in Theorem \ref{thm:total cohom}. Any irreducible component $\cV$ of the eigenvariety passing through the cuspidal point $x_{\refpi}$ has dimension $\mathrm{dim}(\cW) - \ell$.
\end{conjecture}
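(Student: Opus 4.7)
The plan is to split the equality $\dim \cV = \dim \cW - \ell$ into two inequalities and to treat them by rather different techniques: the upper bound is cohomological in nature, while the lower bound is the deep arithmetic content that gives the conjecture its name.

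For the upper bound $\dim \cV \leq \dim \cW - \ell$, I would exploit the base-change spectral sequence of Hansen computing overconvergent cohomology at the single weight $\lambda$ from cohomology over a neighbourhood $\cU \ni \lambda$. Localising at $\m_{\refpi}$ and restricting to slope $\leq h$, it takes the form
\[
E_2^{i,j} = \Tor_{-j}^{\cO(\cU)}\bigl(\hc{i}(Y_K,\sD_{\cU})^{\leq h}_{\refpi},\, L\bigr) \Longrightarrow \hc{i+j}(Y_K,\sD_\lambda)^{\leq h}_{\refpi}.
\]
By non-criticality (Definition \ref{def:non-critical}) the abutment is isomorphic to $\hc{i+j}(Y_K,\sV_\lambda^\vee)^{\leq h}_{\refpi}$, and by classical automorphic theory this vanishes outside the range $q \leq i+j \leq q+\ell$. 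A standard commutative algebra argument over the regular local ring $\cO(\cU)_{\m_\lambda}$ (bounding the codimension of support via Tor-rigidity / Auslander--Buchsbaum-type estimates, since the derived base change is concentrated in $\ell+1$ consecutive degrees) then forces $\hc{\bullet}(Y_K,\sD_{\cU})^{\leq h}_{\refpi}$ to have support of codimension at least $\ell$ in $\cU$, whence $\dim \cV \leq \dim \cW - \ell$. This direction is essentially contained in \cite{Urb11,Han17}.

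For the lower bound $\dim \cV \geq \dim \cW - \ell$, I would translate the problem into Galois deformation theory. Attached to $\refpi$ there is (conjecturally) a Galois pseudo-representation $\rho_{\refpi}$ of $G_{E,S}$ for a suitable finite set $S$ of places; one then expects the local piece $\cE^\bullet_{\cU,h}$ to admit a finite map to the universal deformation ring $R_{\refpi}$ parametrising $p$-adic deformations with appropriate local conditions at $p$ and at ramified primes. A Tate/Euler-characteristic calculation for $\mathrm{ad}^0 \rho_{\refpi}$ predicts $\dim R_{\refpi} \geq \dim \cW - \ell$ provided the obstruction group $H^2(G_{E,S}, \mathrm{ad}^0 \rho_{\refpi})$ vanishes. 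This vanishing is precisely the non-abelian Leopoldt condition, generalising the classical Leopoldt conjecture (the $\rho = 1$ case, controlling the dimension of abelian $p$-extensions of $E$) to the adjoint of the automorphic Galois representation.

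The main obstacle is unambiguously the lower bound: it rests on an $H^2$-vanishing assertion completely out of reach of the automorphic-cohomological techniques of this survey, and this is precisely why Conjecture \ref{conj:dim} must be \emph{assumed}. In particular, since classical Leopoldt for $E$ is itself a special case, proving the non-abelian version in any meaningful generality is (at present) far beyond current methods; accordingly, one should regard the upper bound as the realistic deliverable of this strategy, and the lower bound as an input borrowed from arithmetic.
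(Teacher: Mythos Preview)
The statement is a \emph{conjecture}, and the paper does not prove it; there is no ``paper's own proof'' to compare against. The paper only records, in the remark immediately following the conjecture, what is known: the full statement is proved when $\ell \leq 1$, and in general the inequality
\[
\dim \cV \ \geq\ \dim \cW - \ell
\]
is established unconditionally in \cite[Prop.~B.1]{Han17}. The remaining inequality $\dim \cV \leq \dim \cW - \ell$ is open.

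Your proposal reverses the roles of the two inequalities. You claim the upper bound is the ``easy'' cohomological direction contained in \cite{Urb11,Han17}, and that the lower bound is the deep Leopoldt-type input. In fact it is exactly the opposite. The lower bound is the one proved by Hansen's cohomological/spectral methods, while the upper bound is what remains open and is what earns the name ``non-abelian Leopoldt''. Concretely, in the model case $G = \mathrm{Res}_{E/\Q}\mathrm{SL}_2$ with $E$ totally real, the classical Leopoldt conjecture bounds the $\Zp$-rank of the Galois group of the maximal abelian pro-$p$ extension from \emph{above}; on the eigenvariety side this translates into an \emph{upper} bound on the dimension of cuspidal components, not a lower bound.

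Your sketched Tor-spectral-sequence argument for the upper bound does not work: concentration of the abutment in $\ell+1$ consecutive degrees does not, by any ``standard'' Auslander--Buchsbaum or Tor-rigidity argument, force the support of the overconvergent cohomology to have codimension $\geq \ell$. If it did, the conjecture would already be a theorem. Likewise, your Galois-deformation sketch for the lower bound misattributes the difficulty: Euler-characteristic formulas already give the lower bound on $\dim R$ without any $H^2$-vanishing, and in any case the lower bound on $\dim \cV$ is known unconditionally by purely automorphic means.
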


\begin{remark}In \cite[\S 5.7.3]{Urb11} it is observed that when $G= \mathrm{Res}_{E/\Q} \mathrm{SL}_2$ for $E$ a totally real number field, this conjecture is equivalent to the Leopoldt conjecture, from where the conjecture derives its name. It has been proved if $\ell \leq 1$, and more generally the inequality $\mathrm{dim}(\cV) \geq \mathrm{dim}(\cW)-\ell$ is proved in \cite[Prop.~B.1]{Han17}.
\end{remark}

A consequence of this conjecture is that when $\ell = 0$ -- that is, when $G(\R)$ admits discrete series -- all cuspidal families contain a Zariski-dense set of classical points. In this case the constructions of \cite{Urb11} give a complete answer to this question. Note, however, that the same is not true when $\ell \geq 1$.

\begin{definition}
	We say an algebraic weight $\lambda \in \cW$ is \emph{pure} if it supports cuspidal classical cohomology (in the sense that $\mathrm{H}_{\mathrm{cusp}}^\bullet(Y_K,\sV_{\lambda}^\vee) \neq 0$ for sufficiently small $K$). We write $\cW^{\mathrm{pure}}$ for the Zariski-closure of the pure weights, and call this the \emph{pure weight space}.
\end{definition}
We expect $\cW^{\mathrm{pure}}$ to have codimension $\ell$ in $\cW$. If $G = \GL_n$, then an algebraic weight $\lambda = (\lambda_1,...,\lambda_n)$ is pure if and only if $\lambda_i + \lambda_{n+1-i}$ is independent of $i$ \cite{Clo90}.

If $\cV$ is an irreducible component through a cuspidal point $x_{\refpi}$, we do not in general have control over the image of $\cV$ in $\cW$: this image may intersect the pure weight space only at the weight of $\refpi$, which means that $\cV$ cannot contain any other classical cuspidal points. In this case, interpolation (at a single point) is an empty statement. 

If $\cV$ is a component whose image in weight space is the pure weight space, then necessarily $\cV$ contains a Zariski-dense set of classical points. Indeed, there are a Zariski-dense set of classical weights in $\cW^{\mathrm{pure}}$, and since the slope at $p$ is constant in families, the family will have small slope at a Zariski-dense set of these. The systems of Hecke eigenvalues parametrised in the overconvergent cohomology at these points will thus be classical. We generally expect that such components $\cV$ only arise as a Langlands transfer from a group that does admit discrete series, for example as in the $p$-adic base-change map of \S\ref{sec:p-adic base-change}. This was observed for $\GL_3$ in \cite{APS08}, and more generally in the introduction of \cite{Urb11}. In this sense, the existence of a Zariski-dense set of classical points would appear to only exist for `degenerate' reasons.

\begin{example}
	Suppose $E$ is imaginary quadratic, and $G = \mathrm{Res}_{E/\Q}\GL_2$ (so we are in the setting of Bianchi modular forms). Bianchi cusp forms exist only at parallel weights $(k,k)$ \cite{Har87}, so the pure weight space is a line in the two-dimensional weight space. It is precisely the set of characters $(\cO_E\otimes_{\Z}\Zp)^\times \to \Cp^\times$ that factor through the norm to $\Zp^\times$. In particular, it is the image of the weight space $\cW_{\GL_2} \hookrightarrow \cW_G$ under the natural embedding. So-called `genuine' Bianchi modular forms, those that do not come from base-change from $\Q$, are relatively rare \cite{RS13,RT19}. This is reflected in a conjecture of Calegari--Mazur \cite{CM09}, who predict that these genuine modular forms are so scarce that they cannot exist in classical $p$-adic families: more precisely, that the only classical families of Bianchi modular forms are CM or come from base-change.
\end{example}

\begin{remark} 
	For Bianchi modular forms, a concrete proof of the conjecture for arbitrary cuspidal families (provided by David Hansen) was given in \cite[Theorem 3.8]{BW18}, using the fact that a 2-dimensional family would necessarily contain a Zariski-dense set of classical non-critical Eisenstein series, since the affinoid in weight space has codimension 0 and thus contains a Zariski-dense set of classical weights. This forces reducibility of the Galois pseudo-character, and thus reducibility of the Galois representation of the Bianchi cusp form, which is a contradiction. Note that the same argument does not work when $\ell > 1$; any affinoid of positive codimension in $\cW$ might not contain \emph{any} classical weights outside $\lambda$.
\end{remark}

\subsection{Top row, II: families of $p$-adic $L$-functions}\label{sec:top row II}
The above formalism admits clear connections to the constructions of $p$-adic $L$-functions in the middle row. There, we wanted to interpolate the the bottom row as the parameter $\varphi$ varied. We argued in three steps: 
\begin{enumerate}[(1)]\setlength{\itemsep}{0pt}
	\item we first considered a much larger coefficient sheaf $\sD_\lambda$;
	\item we then exhibited a distinguished class $\Phi_{\refpi} \in \hc{d}(Y_K,\sD_\lambda)$;
	\item and finally we constructed a Galois evaluation map $\mathrm{Ev}_\lambda$, built using the maps of \ref{sec:abstract evaluation} and valued in $\cD(\Galp,\overline{\Q}_p)$, interpolating all the $\mathrm{Ev}_{\varphi}$. 
\end{enumerate}
To construct a family of $p$-adic $L$-functions, we instead want to interpolate the \emph{middle} row as the parameter $\lambda$ varies. We argue in a very similar fashion. Suppose $\refpi$ gives rise to a point $x_{\refpi}$ in the eigenvariety $\cE_{\cU,h}^\bullet$, and let $\cV$ be a neighbourhood of $x_{\refpi}$ in $\cE_{\cU,h}^\bullet$ lying over a Zariski-closed $\Sigma \subset \cU$ containing $\lambda$. Then we use the following strategy:
\begin{enumerate}[(1)]\setlength{\itemsep}{0pt}
	\item\label{sheaf} We pass to a bigger coefficient sheaf, $\sD_{\Sigma}$ as above, interpolating the $\sD_\lambda$.
	
	\item\label{class} Then we exhibit a canonical class $\Phi_{\cV} \in \hc{d}(Y_K,\sD_{\Sigma})$ interpolating the $\Phi_{\refpi_y}$ as $y$ varies over classical points in $\cV$ corresponding to refined automorphic representations $\refpi_y$.
	\item\label{big ev} Finally, we build a compatible system of evaluation maps $\mathrm{Ev}^c_{\cV}$, valued in $\cD(\Galp,\cO(\cV))$, interpolating the $\mathrm{Ev}^c_{\lambda_y}$ in the sense that
	\begin{equation}\label{eqn:ev int}
		\mathrm{Ev}^c_{\lambda_y} \circ \mathrm{sp}_{\lambda_y} = \mathrm{sp}_{\lambda_y} \circ \mathrm{Ev}^c_{\cV},
	\end{equation}
	where $\lambda_y$ is the weight of $\refpi_y$, and $\mathrm{sp}_{\lambda_y}$ is induced from $\cO(\Sigma) \to \cO(\Sigma)/\m_{\lambda_y} \subset \overline{\Q}_p$. This forms the commutative square of rows [T] and [M] in Figure \ref{main diagram}.
\end{enumerate}

We have already described \eqref{sheaf}. Part \eqref{big ev} is built in exactly the same manner as \S\ref{sec:galois evaluations}; by simply replacing the coefficients and arguing almost identically, one obtains a map $\mathrm{Ev}^c_{\Sigma}$ into $\cD(\Gal_p(E),\cO(\Sigma))$, and $\mathrm{Ev}^c_{\cV}$ is obtained by tensoring with $\cO(\cV)$ over $\cO(\Sigma)$. That this map satisfies the property \eqref{eqn:ev int} follows using the covariance of Remark \ref{rem:covariance} applied to the map $\cD_{\Sigma} \to \cD_\lambda$ of $K$-modules.

It remains to treat \eqref{class}. This is much more subtle. A particular subtlety arises as the construction of $p$-adic $L$-functions typically works using a specific degree of cohomology $d$, and we must thus also exhibit our class in \emph{this} degree. Working over full dimensional affinoids, however, we expect the overconvergent cohomology to be supported only in the \emph{top} degree (see Newton's remark at the end of \cite[App.~B]{Han17}). To access lower degrees, we must study a more constrained eigenvariety; namely, we aim to show that $\cV$ is an irreducible component of the eigenvariety $\cE_{\Sigma,h}^d$, where we constrain the weight space to a smaller space $\Sigma \subset \cU$ and the degree to $d$ instead of the total cohomology. The existence of $\Phi_{\cV}$ can then often be shown by studying the geometry of $\cV$ in $\cE_{\Sigma,h}^d$.

In the second part of this paper, we restrict to the case of $\GL_2$ over a CM field, and give a construction of $\Phi_{\cV}$ in one-dimensional families under the additional hypothesis of Conjecture \ref{conj:dim}. This conjecture provides the additional control required to exhibit classes in the correct degree (Corollary \ref{cor:non-vanishing}), and its necessity in our methods reflects the subtlety of working in situations where cohomology appears in multiple degrees.

\subsection{Known cases}
We end this survey by briefly indicating some cases in which (parts of) this program have been successfully carried out.
\begin{enumerate}[--]\setlength{\itemsep}{0pt}
	\item \textbf{Classical modular forms}. The first construction of this kind was given for Hida families of (ordinary) modular forms in \cite{GS93}.  Closer to the strategy described in this survey, and the main inspiration of it, is the original construction of Stevens in \cite{Ste94}, later explored by Pollack and Stevens in \cite{PS11}. This gives the construction for small slope forms -- that is, $f$ with $v_p(a_p(f)) < k+1$ -- using classes in $\hc{1}$ which, in this case, can be described more concretely via \emph{modular symbols}. They extended the construction to certain critical slope forms (with $v_p(a_p(f)) = k+1$) in \cite{PS12} by giving a finer criterion for the form to be non-critical (in the sense of Definition \ref{def:non-critical}).
	
	The case of critical $f$ was settled in \cite{Bel12}, who used the geometry of the eigencurve to provide the canonical class $\Phi_{\refpi}$ in this case. We note that for such $f$, this class is mapped to zero in the classical cohomology, and hence the interpolation formula is always zero at every special $L$-value. Bella\"iche also varied this construction in families.
	
	\item \textbf{Hilbert modular forms}. Let $E$ be totally real of degree $d$. The first construction for single, small slope forms for $G= \mathrm{Res}_{E/\Q}\GL_2$ appeared in \cite{Bar15}, using $\hc{d}$ and the automorphic cycles of \cite{Dim13}. This was generalised to critical forms in \cite{BH17}, and variation in families was given in \cite{BH17,BDJ17}.
	
	\item \textbf{Bianchi modular forms}. 
	Let $E$ be an imaginary quadratic field. \cite{Wil17} contained a construction for single, small slope forms for $G= \mathrm{Res}_{E/\Q}\GL_2$, using $\hc{1}$ and its description via modular symbols. This was generalised to critical base-change forms in \cite{BW18}, which also gave interpolation in base-change families. We note that this is the first setting where forms appear in multiple degrees of cohomology, so forms that are not base-change or CM are not expected to vary in classical families.
	
	\item $\mathbf{GL}_2$ \textbf{over a general number field $E$}. In this case, a construction was given in \cite{BW_CJM}, using $\hc{q}$, where $q = r+s$ is the bottom degree in which automorphic representations appear (where $E$ has $r$ real embeddings and $s$ pairs of complex embeddings).
	
	\item $\mathbf{GL}_{2n}$\textbf{ over totally real fields}. In this case, a construction for small slope \emph{symplectic} automorphic representations -- including variation in families -- is the main result of \cite{BDW20}.
	
	\item \textbf{Square root $p$-adic $L$-functions}. In a different direction, there are numerous constructions of $p$-adic variation in families of a \emph{single} value: that is, variation in the weight without variation in the cylotomic variable. An example of this sort is found in \cite{BD09}, where overconvergent cohomology is used to construct a $p$-adic measure on a Hida family $\mathcal{R}$ whose square interpolates the central critical values $L(f/K, \chi_K, k)$ at classical $f \in \mathcal{R}$ of even weight. Here $K$ is a real quadratic field and $\chi_K$ its genus character.
	
	\item \textbf{Adjoint $p$-adic $L$-functions}. On a similar theme, the main result of \cite{Lee21} is a construction, via overconvergent $\hc{2}$, of a $p$-adic $L$-function over a Hida family $\mathcal{R}$ that interpolates (non-critical) values of the adjoint $L$-function attached to $\mathcal{R}$ and an imaginary quadratic field. Again, there is no cyclotomic variation in this construction.
	
	\item \textbf{Triple product $p$-adic $L$-functions}. In \cite{GS20}, the authors use overconvergent methods to construct triple product $p$-adic $L$-functions in the balanced region of weight space, interpolating the central critical value of the triple product $L$-function of a trio of Coleman families over this region.
	
\end{enumerate}


\section{Families of $p$-adic $L$-functions over CM fields}\label{sec:CM families}

In the remainder of the paper, we show how to carry out the last part of the strategy described above in a new situation: producing families of $p$-adic $L$-functions for $\GL_2$ over CM fields. In this case, the construction for a single form (valid over arbitrary number fields) was given in our earlier paper \cite{BW_CJM}, to which we refer the reader for further details on our conventions.

\subsection{Preliminaries and notation}\label{sec:preliminaries}
Let now $E$ be a CM field, with totally real subfield $E^+$, and fix $G = \mathrm{Res}_{E/\Q}\GL_2$ and $G^+ \defeq \mathrm{Res}_{E^+/\Q}\GL_2$. Let $d = [E^+:\Q]$, let $\cO_E$ (resp.\ $\cO_{E^+}$) denote the ring of integers and let $\Sigma_E$ (resp.\ $\Sigma_{E^+}$) denote the set of complex embeddings of $E$ (resp.\ $E^+$). We maintain all of the general notation of \S\ref{sec:survey}, with the convention that a superscript $+$ always denotes an object for $E^+$. 
For simplicity, we write $\mathrm{Gal}_p^+= \mathrm{Gal}_p(E^+)$ and $\mathrm{Gal}_p= \mathrm{Gal}_p(E)$. In this setting, we have more concrete descriptions of the several objects involved, which we now highlight.

Let $\pi^+$ be an automorphic representation for $G^+(\A)$, which we will always assume does not have CM by $E$, and let $\pi$ be its base-change to $G(\A)$. To maintain consistency with \cite{BW_CJM}, rather than using the more conceptual language of automorphic representations, we instead switch to the language of automorphic forms. In particular, throughout $f^+$ will be a cuspidal automorphic eigenform in $\pi^+$ -- a Hilbert modular form -- and $f$ its base-change, that is, a choice of automorphic eigenform in $\pi$ with the correct Hecke eigenvalues. We assume that $f^+$ and $f$ are new at every place $v\nmid p$. For $f^+$, we work with the concrete level group
\[
K_1(\n^+) \defeq \left\{\smallmatrd{a}{b}{c}{d} \in G^+(\A_f) : c \equiv 0 \newmod{\n^+}, d \equiv 1 \newmod{\n^+}\right\},
\]	
for $\n^+ \subset \cO_{E^+}$ an ideal. We assume for simplicity that $\n^+$ is coprime\footnote{This is not necessary, and is imposed only to simplify the notation; see the discussion in \cite[\S3]{BW18}.} to $\mathfrak{d} = \mathrm{disc}(E/E^+)$; then in particular, $f$ has level $K_1(\n)$, where $\n = \n^+\cO_E$ (and $K_1(\n) \subset G(\A_f)$ is the natural analogue of $K_1(\n^+)$). We assume that $\n^+$ (hence $\n$) is divisible by all the primes of $E^+$ (resp.\ $E$) above $p$, so that $f$ corresponds already to the choice of $p$-refined eigenform in the data of $\refpi$. We write $Y_1(\n) = Y_{K_1(\n)}$.

For $\pri|p$ in $E$, we take $U_{\pri}$ to be the usual Hecke operator corresponding to the double coset operator $[K_1(\n)\smallmatrd{1}{}{}{\varpi_\pri}K_1(\n)]$, where $\varpi_\pri$ is a uniformiser at $\pri$. Let $\alpha_{\pri}$ be the $U_{\pri}$-eigenvalue of $f$, which we assume, for all $\pri|p$, to be a simple eigenvalue of $U_{\pri}$ on $\pi^{K_1(\n)}$, so that either $f$ is new at $\pri$, or $\pi$ is unramified at $\pri$, $K_1(\n)_{\pri} = I_{\pri}$ and $U_p$ has distinct eigenvalues on $\pi_{\pri}^{I_{\pri}}$, where $I_{\pri}$ is the Iwahori (similarly to \S\ref{sec:set-up}). 

The algebraic representation $V_\lambda$ is a tensor product of polynomial rings (see \cite[\S2.1]{BW_CJM}). We consider the cohomology groups $\hc{i}(Y_1(\n),\sV_\lambda^\vee(L))$ exactly as in \S\ref{sec:survey}. In this setting, the cohomology at $f$ is supported in degrees $i \in [d,d+1,...,2d]$, and we work in the bottom degree cohomology. Our assumption that $\alpha_{\pri}$ is a simple eigenvalue at each $\pri$, and that $f$ is new at all other primes, ensures that for sufficiently large $L/\Qp$,
\begin{equation}\label{eqn:classical line}
	\mathrm{dim}_{L}\hc{d}(Y_1(\n),\sV_\lambda^\vee(L))_{f} = 1,
\end{equation}
where we write $\m_f$ and a subscript $f$ in place of $\m_{\refpi}$ and a subscript $\refpi$ from \S\ref{sec:survey}. Implicit in this statement is the choice of the complex period $\Omega_{f} \in \C^\times$ and rationality of the cohomology from \cite{Har87,Hid94}. We denote by $\phi_f \in \hc{d}(Y_1(\n),\sV_\lambda^\vee(L))$ the resulting class, which is canonical up to scaling by an element of $\Q(f)^\times$, the number field generated by the Hecke eigenvalues of $f$. This fact is important when we vary in families.

Let $\Lambda(f,\varphi)$ denote the (completed) $L$-function of $f$, normalised as in \cite[\S3]{BW_CJM}.


\subsection{Abstract evaluation maps}\label{ss: CM abstract evaluation maps}

We introduce the \emph{evaluation maps} considered in the context of $ \mathrm{Res}_{E/\Q}\GL_2$. Such evaluations were essentially introduced in \cite[\S5, \S10]{BW_CJM} and we adjust the constructions \emph{op.\ cit}.\ in order to simultaneously describe the bottom, middle and top rows of Figure \ref{main diagram}.

Firstly we recall the relevant automorphic cycles as in \S\ref{sec:abstract evaluation}. We denote $E^{1}_{\infty} \defeq \{z\in E_{\infty}: |z_v|_v= 1 \ \text{for all} \ v\mid \infty\}\cong (\mathbb{S}^1(\R))^d$.  For an ideal $\fc$ of $E$ we denote $U(\fc)\defeq \{ u\in \hat{\cO}_K^{\times}: u-1 \in \fc\hat{\cO}_K \}$ and we define 
\[
X_{\fc}\defeq E^{\times}\backslash \bA_{E}^{\times}/U(\fc)E^{1}_{\infty}.
\]
In the language of \S\ref{sec:abstract evaluation} we consider $H= \mathrm{Res}_{E/\Q}\mathrm{GL}_1$, $G= \mathrm{Res}_{E/\Q} \mathrm{GL}_2$ and $\iota: H \hookrightarrow G$ given by the morphism $\mathrm{GL}_1 \hookrightarrow \mathrm{GL}_2, a \mapsto \smallmatrd{a}{0}{0}{1}$ . 

The set of connected components of $X_{\fc}$ is naturally described as $\pi_{0}(X_{\fc})\cong \mathrm{Cl}_{E}(\fc)$, noting that this is also the narrow ray class group, as $E$ has no real embeddings. In fact, we have the following decomposition of $X_{\fc}$ into connected components:
\begin{align*}
	X_{\fc}= \bigsqcup_{a \in \pi_{0}(X_{\fc})} X_{\fc}^{a}&\cong \left[\cO_{E, \fc}^{\times}\backslash (E_\infty^{\times}/ E_{\infty}^1)\right]^{\pi_{0}(X_{\fc})}\\
	&\cong \left[\cO_{E, \fc}^{\times}\backslash (\C^\times/\mathbb{S}^1(\R))^d\right]^{\pi_{0}(X_{\fc})} \cong \left[\cO_{E, \fc}^{\times}\backslash \R^d_{+} \right]^{\pi_{0}(X_{\fc})}.
\end{align*}
In the notation of \S\ref{sec:abstract evaluation}, we have $\Gamma_{\fc, a}= \cO_{E, \fc}^{\times}$ for each $a \in \pi_{0}(X_{\fc})$, and in particular $\Gamma_{\fc, a}$ is independent of $a$.

Write $\fc= \prod_{v}v^{n_v}$  and let $\varpi_{\fc}^{-1}\in \A_E$ and $i_{\fc}\in \A_E^{\times}$ be defined as follows:  if $n_v\neq 0$ we put $(\varpi_{\fc}^{-1})_v= \varpi_v^{-n_v}$ and $(i_{\fc})_v= \varpi_v^{n_v}$; if $n_v= 0$ then we put $(\varpi_{\fc}^{-1})_v=0$ and $(i_{\fc})_v= 1$  (for $\varpi_v$ fixed uniformisers in $\cO_{E_v}$). The \emph{automorphic cycle} of level $\fc$ is the embedding
\[
\iota_{\fc}: X_{\fc} \hookrightarrow Y_1(\fn), \ \ [x] \mapsto \left[\smallmatrd{x}{x\varpi_{\fc}^{-1}}{0}{1}\right].
\]
(This was denoted $\eta_{\fc}$ in \cite{BW_CJM}). When $\fc = 1$, we write $\iota\defeq \iota_1$. In order to apply the formalism described in \S\ref{sec:abstract evaluation} to obtain general evaluation maps, we need to make precise the ``appropriate twists'' of Construction \ref{cons:eval}, step 2.  Let $M$ be a module admitting a right action of a semigroup containing $K_1(\fn)$ and $\smallmatrd{1}{-1}{0}{i_{\fc}}$. Then we consider the twist
\[
\zeta_* : \hc{d}(X_\fc, \iota_\fc^{\ast}\sM) \rightarrow \hc{d}(X_\fc, \iota^{\ast}\sM)
\]
induced by the map 
\begin{align*}
	\zeta : \mathbb{A}_E^{\times}\times M &\longrightarrow \mathbb{A}_E^{\times}\times M,\\ 
	(x, m)&\longmapsto \left(x, m \left|\smallmatrd{1}{-1}{0}{i_{\fc}}\right.\right).
\end{align*}

Philosophically, we twisted the automorphic cycle to ensure the connected components were mapped into different parts of $Y_1(\n)$, but this also caused the local system to become twisted; $\zeta$ `untwists' the local system. Then $\widetilde{\iota}_{\fc} \defeq \zeta_* \circ \iota_{\fc,*}$ is the map required in Construction \ref{cons:eval}.

We fix $S_{\fc}$ an idelic system of representatives of $\mathrm{Cl}_E(\fc)$, and for each $a \in \pi_{0}(X_{\fc})$, we denote by $x_a\in S_{\fc}$ the corresponding idele. The procedure of \S\ref{sec:abstract evaluation} gives, for each $a \in \pi_{0}(X_{\fc})$, an evaluation map
\[
\mathrm{Ev}_{\fc, x_a}^M: \hc{d}(Y_1(\fn), \sM) \rightarrow M_{\cO_{E, \fc}^{\times}}.
\]
These are naturally compatible in changing $x_a$: if $x_a' = \gamma x_a ur$ is a different choice, with $u \in U(\fc)$, then $\mathrm{Ev}_{\fc,x_a'}^M |\smallmatrd{u}{}{}{1} = \mathrm{Ev}_{\fc,x_a}$ (see the proof of \cite[Prop.~10.6]{BW_CJM}; the proof works for arbitrary $M$). As outlined in \S\ref{sec:abstract evaluation}, they are naturally covariant in $M$.

\subsection{Bottom and middle rows}

Firstly we specialise the general discussion in \ref{sec:bottom row} about the bottom row, and give a description of the critical $L$-values of $f$ in terms of the evaluations of \S\ref{ss: CM abstract evaluation maps}. We follow \cite{BW_CJM}.

Recall from \S\ref{sec:preliminaries} the modular form $f$ of weight $\lambda= (k, v)\in \Z[\Sigma_E]\times \Z[\Sigma_E]$ and the class $\phi_{f} \in \hc{d}(Y_1(\fn), \sV_\lambda^{\vee}(L))$, which in fact is defined over the number field $\Q(f)$. Recall $V_\lambda$ is a space of (multivariable) polynomials. Let $j\in \Z[\Sigma_E]$ and consider the map $\rho_j: V_\lambda^\vee(\Q(f)) \rightarrow \Q(f)$ introduced in \cite[\S 5.2.3]{BW_CJM}; this is given by evaluation at some precise monomial of exponent $j+v$. If $j + v$ is the infinity type of a Hecke character, then it is shown \emph{op.\ cit}.\ that this map naturally descends to the coinvariants $(V_\lambda^\vee(\Q(f)))_{\cO_{E,\fc}^\times}$. Choosing representatives $x_a$ of $\mathrm{Cl}_E(\fc)$, and composing, we obtain an evaluation
\[
\mathrm{Ev}_{\fc, x_a, j}\defeq \rho_j \circ \mathrm{Ev}_{\fc,x_a}^{V_{\lambda}^\vee(\Q(f))} : \hc{d}(Y_1(\fn), \sV_\lambda^\vee(\Q(f)))\longrightarrow \Q(f).
\]
The following is \cite[Thm.~5.5]{BW_CJM}, and completes the bottom row calculations:
\begin{proposition} \label{p: CM L-values and evaluations}  For each Hecke character $\varphi$ of $E$ of conductor $\fc$ and infinity type $j+ v$ such that $0 \leq j \leq k$ we have
	\[
	\mathrm{Ev}_{\varphi}(\phi_f) \defeq \sum_{a \in \mathrm{Cl}_E(\fc)}\varphi(x_a)\mathrm{Ev}_{\fc, x_a, j}(\phi_f)= (-1)^{r} \frac{|D|\tau(\varphi)}{2^{d}\Omega_f}\cdot\Lambda(f, \varphi),
	\]
	where $D$ is the discriminant of $E$, $\tau(\varphi)$ is the Gauss sum attached to $\varphi$ and $r= \sum_{\sigma \in \Sigma_{E}}k_{\sigma}$.
\end{proposition}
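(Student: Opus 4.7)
The identity is of classical Mellin-transform flavour, so the plan is to unwind both sides into concrete integrals over a real cone and match them. I would proceed in four steps.

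First, I would make the class $\phi_f$ explicit. Under the Eichler--Shimura/Borel--Matsushima realisation, $\phi_f$ arises from a harmonic $(V_\lambda^\vee(\C))$-valued differential form $\omega_f$ on $Y_1(\fn)$ constructed from the Whittaker--Fourier expansion of $f$, divided by the period $\Omega_f$ to land in $V_\lambda^\vee(\Q(f))$. Unwinding the definition of $\mathrm{Ev}^{V_\lambda^\vee}_{\fc,x_a}$ in \S\ref{ss: CM abstract evaluation maps}, the pull-back $\iota_\fc^{\ast}\widetilde\iota_\fc^{\ast}\omega_f$ along the twisted automorphic cycle, restricted to the $a$-component $X_\fc^a\cong \cO_{E,\fc}^\times\backslash \R_+^d$, integrates to the value of a $V_\lambda^\vee(\C)$-valued function at the class of $x_a$, obtained by integrating the Whittaker coefficients of $f$ along $\R_+^d$ after the shearing by $\smallmatrd{1}{-\varpi_\fc^{-1}}{0}{i_\fc}$.

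Second, I would apply $\rho_j$. Since $\rho_j$ is evaluation at the monomial $X^{j+v}$ in the $V_\lambda^\vee$-valued integral above, the resulting scalar is a Mellin-type integral over $\cO_{E,\fc}^\times\backslash\R_+^d$ of $f$ against $x^{j+v}\,d^\times x$, with an explicit additive twist $e_E(-\varpi_\fc^{-1}x)$ coming from the $\varpi_\fc^{-1}$-shear. The descent of $\rho_j$ to the coinvariants, proved in \cite[\S5]{BW_CJM}, is exactly the input guaranteeing that this integral is well-defined on the quotient.

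Third, I would assemble the sum $\sum_{a}\varphi(x_a)\mathrm{Ev}_{\fc,x_a,j}(\phi_f)$. The characters $\varphi(x_a)$ over representatives of $\mathrm{Cl}_E(\fc)$, together with the individual integrals over $\cO_{E,\fc}^\times\backslash\R_+^d$, glue by strong approximation into a single adelic integral
\[
\int_{E^\times\backslash\A_E^\times/E_\infty^1}\!\! f\!\left(\smallmatrd{x}{x\varpi_\fc^{-1}}{0}{1}\right)\varphi(x)\,|x|^{s_0}\,d^\times x,
\]
for the appropriate choice of $s_0$ dictated by $j+v$. The additive twist $e_E(-\varpi_\fc^{-1}\,\cdot\,)$ translates, after changing variables by $\varpi_\fc$ and using the Fourier expansion of $f$, into the standard Gauss sum $\tau(\varphi)$ attached to $\varphi$ of conductor $\fc$.

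Finally, I would identify the resulting integral with the completed $L$-function. Unfolding against the Fourier--Whittaker expansion of $f$ collapses the adelic integral to the standard Hecke integral $\Lambda(f,\varphi)$, with archimedean contribution supplying the $\Gamma$-factors (yielding the completed $L$-function) and the global factor $|D|/2^d$ arising from the measure normalisations on $\A_E$ and from the volume of $E_\infty^1\cong(\mathbb{S}^1(\R))^d$. The sign $(-1)^r$ appears from the orientation chosen on $\R_+^d$ together with the weight-$k$ action of $\smallmatrd{-1}{0}{0}{1}$ via $r=\sum_\sigma k_\sigma$. The expected obstacle is bookkeeping rather than conceptual: carefully tracking how the $\zeta$-twist, the shear by $\varpi_\fc^{-1}$ and the descent to $\cO_{E,\fc}^\times$-coinvariants interact with the Whittaker expansion to produce exactly the Gauss sum $\tau(\varphi)$ and the normalising constant $(-1)^r|D|/(2^d\Omega_f)$ with no stray factors; this matches the computation done in \cite[\S5]{BW_CJM}.
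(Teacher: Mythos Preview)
Your sketch is a faithful outline of the classical Mellin--Whittaker computation, and it is essentially the route taken in \cite[\S5]{BW_CJM}, which you correctly cite at the end. Note, however, that the present paper does not reprove this result at all: the proposition is simply quoted verbatim as \cite[Thm.~5.5]{BW_CJM}, with no argument given here. So there is nothing to compare against in this paper beyond the citation; your four steps (explicit harmonic form from the Whittaker model, application of $\rho_j$ as evaluation at the distinguished monomial, assembly over $\mathrm{Cl}_E(\fc)$ into an adelic integral with the additive twist producing $\tau(\varphi)$, and unfolding to $\Lambda(f,\varphi)$) match the structure of the proof in the earlier paper and are the right way to establish the identity from scratch.
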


We now describe the middle row. Consider the overconvergent cohomology group $\hc{d}(Y_1(\fn), \sD_\lambda(L))$ for $\cD_\lambda(L)$ as defined in \S\ref{ss: survey overconvergent cohomology}. To obtain canonical lifts, we make precise the bound $C_\lambda(t)$ from Theorem \ref{thm:control} in this setting. We are in the stronger context of Remark \ref{rem:multi}(ii). 

\begin{definition}\label{def:small slope}
	Let $\mathbf{h}= (h_{\fp})_{\fp\mid p} \in \Q^{\fp\mid p}$. We say that $\mathbf{h}$ is a \emph{small} (or \emph{non-critical}) slope for the weight $\lambda = (k,v)$ if for each prime $\fp\mid p$ in $E$, we have 
	\begin{equation}\label{eqn:small slope}
		h_{\fp}< C_\lambda(\fp) \defeq \frac{1+ \mathrm{min}_{\sigma \in \Sigma_{\fp}}(k_{\sigma}) + \sum_{\sigma \in \Sigma_{\fp}}v_\fp}{ e_{\fp}}.
	\end{equation} 
	(Here $e_{\fp}$ is the ramification index of $\fp$, $\Sigma_{\pri}\subset \Sigma$ is the set of embeddings $E \hookrightarrow \overline{\Q}$ which extend to $E_{\pri}\hookrightarrow \overline{\Q}_p$. Note $\Sigma = \sqcup_{\pri}\Sigma_{\pri}$).
\end{definition}
The following control theorem is a precise version of Theorem \ref{thm:control} and was proved in \cite[Thm.~9.7]{BW_CJM}. For the notion of `multi-slope decompositions' with respect to multiple operators, a refinement of the theory explained in \S\ref{sec:survey}, see \cite[\S9]{BW_CJM}.

\begin{theorem}\label{thm:small-slope}
	Let $\mathbf{h}$ be a small slope for $\lambda$. Then after restricting to the slope $\leq \mathbf{h}$ subspaces of the $U_{\pri}$-operators, we have an isomorphism  
	$$\hc{d}(Y_1(\fn), \sD_\lambda(L))^{\leq \mathbf{h}} \cong \hc{d}(Y_1(\fn), \sV_\lambda^\vee(L))^{\leq \mathbf{h}}.$$
\end{theorem}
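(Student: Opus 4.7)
The plan is to follow Stevens' classical strategy, adapted to the multi-slope setting of \cite[\S9]{BW_CJM}. First I would dualise the natural inclusion $V_\lambda(L) \hookrightarrow \cA_\lambda(L)$ to obtain a short exact sequence of $K_1(\fn)$-modules
$$0 \longrightarrow \sN_\lambda(L) \longrightarrow \cD_\lambda(L) \longrightarrow V_\lambda^\vee(L) \longrightarrow 0,$$
with $\sN_\lambda(L)$ defined as the kernel. Promoting this to local systems and taking compactly supported cohomology gives a long exact sequence, equivariant for each $U_{\pri}$ with $\pri \mid p$, of the form
$$\cdots \to \hc{d-1}(Y_1(\fn), \sV_\lambda^\vee(L)) \to \hc{d}(Y_1(\fn), \sN_\lambda(L)) \to \hc{d}(Y_1(\fn), \sD_\lambda(L)) \to \hc{d}(Y_1(\fn), \sV_\lambda^\vee(L)) \to \hc{d+1}(Y_1(\fn), \sN_\lambda(L)) \to \cdots.$$

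The core of the argument will then be a purely local slope bound at $p$: for each prime $\pri \mid p$, I would show that $U_{\pri}$ acts on $\hc{i}(Y_1(\fn), \sN_\lambda(L))$ with slope $\geq C_\lambda(\pri)$ in every degree $i$. To establish this, I would use the Iwahori decomposition $I_p = I_p^- T(\Z_p) N(\Z_p)$ to identify $\cA_\lambda(L)$ with locally analytic functions on $N(\Z_p) \cong \cO_E \otimes_{\Z} \Z_p$, under which $V_\lambda(L)$ corresponds to polynomial functions of multidegree bounded componentwise by $k$. A direct calculation of the action of a system of coset representatives for $[I_p \smallmatrd{1}{0}{0}{\varpi_\pri} I_p]$ on monomials $z^n$ with $n_\sigma > k_\sigma$ for some $\sigma \in \Sigma_\pri$, together with the weight twist by $v$, should show that $U_\pri$ on $\sN_\lambda(L)$ is divisible by $\varpi_\pri$ to a power at least $1 + \min_{\sigma \in \Sigma_\pri} k_\sigma + \sum_{\sigma \in \Sigma_\pri} v_\sigma$; normalising by the ramification index $e_\pri$ then gives exactly the bound $C_\lambda(\pri)$ of Definition \ref{def:small slope}.

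Granting this slope estimate, since each $U_\pri$ acts compactly on overconvergent cohomology, slope $\leq h_\pri$ decompositions exist and the bound forces $\hc{i}(Y_1(\fn), \sN_\lambda(L))^{\leq h_\pri} = 0$ whenever $h_\pri < C_\lambda(\pri)$. The multi-slope formalism of \cite[\S9]{BW_CJM} then yields $\hc{i}(Y_1(\fn), \sN_\lambda(L))^{\leq \mathbf{h}} = 0$ for all $i$. Since slope decompositions are exact, applying $(-)^{\leq \mathbf{h}}$ to the long exact sequence collapses it to the desired isomorphism
$$\hc{d}(Y_1(\fn), \sD_\lambda(L))^{\leq \mathbf{h}} \cong \hc{d}(Y_1(\fn), \sV_\lambda^\vee(L))^{\leq \mathbf{h}}.$$

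The main obstacle will be the explicit local slope computation, since one must pick an orthonormal basis of $\cA_\lambda(L)$ compatible with the finite-dimensional subspace $V_\lambda(L)$, track the normalisation of the $I_p$-action carefully (including the determinant-type twist by $v$), and handle the ramification and multi-embedding structure at $p$. This bookkeeping is precisely what produces the $e_\pri$-denominator together with the combination of $\min_{\sigma \in \Sigma_\pri}$ and $\sum_{\sigma \in \Sigma_\pri}$ in the bound $C_\lambda(\pri)$, rather than a cleaner expression as in the split or unramified case. Once this valuation estimate is in hand, the remainder of the argument is formal.
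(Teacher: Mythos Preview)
The paper does not give a proof of this theorem in the present text; it simply records that the result ``was proved in \cite[Thm.~9.7]{BW_CJM}'', the authors' earlier paper, of which the current article is largely a survey. Your proposal is the standard Stevens--Ash--Stevens strategy (short exact sequence with kernel $\sN_\lambda$, long exact sequence in cohomology, slope estimate on $\sN_\lambda$ forcing vanishing of $\hc{i}(Y_1(\fn),\sN_\lambda)^{\leq \mathbf{h}}$, hence the isomorphism), and this is indeed the method used in the cited reference. One small refinement: the slope bound is established at the level of the coefficient module $\sN_\lambda$ itself (via a filtration whose graded pieces are controlled), not directly on cohomology; you gesture at this with the monomial computation, but in practice the argument proceeds by filtering $\sN_\lambda$ and bounding the $U_\pri$-action on successive quotients, which then propagates to cohomology uniformly in degree.
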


The control theorem applied to the class $\phi_{f} \in \hc{d}(Y_1(\fn), \sV_\lambda^\vee(L))^{\leq \mathbf{h}}$, which we assume to have small slope, yields a canonical class $\Phi_{f} \in \hc{d}(Y_1(\fn), \sD_\lambda(L))$. To construct the $p$-adic $L$-function from this class, following \S\ref{sec:galois evaluations}, for each ideal $\fc \mid (p^{\infty})$ we obtain a Galois evaluation map  
\[
\mathrm{Ev}^{\fc}_{\lambda}: \hc{d}(Y_1(\n), \sD_\lambda(L)) \rightarrow \cD(\mathrm{Gal}_p, L),
\]
constructed in \cite[\S10]{BW_CJM}. Here Expectation \ref{expectation} is shown in \cite[Prop.~10.9]{BW_CJM}.

\begin{definition} 
	The \emph{$p$-adic $L$-function attached to $f$} is $L_p(f)= \alpha_{p}^{-1}\mathrm{Ev}_{\lambda}^{(p)}(\Phi_{f})  \in \cD(\mathrm{Gal}_p, L)$, where $\alpha_{p}$ is the $U_p$-eigenvalue of $f$. 
\end{definition}

The following was the main result of \cite{BW_CJM} (Theorem 12.1). We maintain the notation of Proposition \ref{p: CM L-values and evaluations}.

\begin{theorem}\label{p:admissible} $L_p(f)$ is independent of all choices, and satisfies the following interpolation property: for all Hecke characters $\varphi$ of $E$ with infinity type $j+v$, for $0 \leq j \leq k$, and conductor $\fc|p^\infty$, we have
	\begin{align*}
		L_p(f,\varphi) &\defeq \int_{\Gal_p}\varphi_{(p)} \cdot dL_p(f)\\
		&= (-1)^r \left[\frac{|D|\tau(\varphi)\varpi_{\fc}^{j+v}}{2^d \alpha_{\fc}\Omega_f}\right] \left(\prod_{\fp|p,\ \fp\nmid \fc} Z_{\pri}\right)\cdot \Lambda(f,\varphi),
	\end{align*} 
	where $\varphi_{(p)}$ is the $p$-adic avatar of $\varphi$ as in \cite[Def.~2.3]{BW_CJM}, $\alpha_{\fc}$ is the $U_{\fc}$-eigenvalue of $f$ and $Z_{\fp} \defeq \varphi_{(p)}(\varpi_{\fp})(1-\alpha_{\fp}^{-1}\varphi(\fp)^{-1}).$
\end{theorem}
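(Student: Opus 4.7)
The plan is to trace the commutative diagram of Figure \ref{main diagram} at the single weight $\lambda$, following $\Phi_f$ through the middle row to obtain $L_p(f) \in \cD(\mathrm{Gal}_p, L)$, then down to the classical class $\phi_f$ via $\rho$, and finally to the critical $L$-value via Proposition \ref{p: CM L-values and evaluations}. The real content of the theorem is the identification of the precise interpolation factor.

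To evaluate $L_p(f)$ against $\varphi_{(p)}$ for $\varphi$ of conductor $\fc$, the first step is to invoke the horizontal compatibility of Expectation \ref{expectation}, proved in this CM setting as \cite[Prop.~10.9]{BW_CJM}: for each $\pri \mid p$, one has $\mathrm{Ev}_\lambda^{c\pri} = \mathrm{Ev}_\lambda^c \circ U_\pri$. Since $\Phi_f$ is a joint $U_\pri$-eigenclass with eigenvalues $\alpha_\pri$, the quantity $\alpha_c^{-1}\mathrm{Ev}_\lambda^c(\Phi_f)$ is independent of $c$ for $(p) \mid c \mid (p^\infty)$; in particular $L_p(f)$ is independent of the choice implicit in its definition. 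Taking $c = \mathrm{lcm}(\fc,(p))$, and writing $\alpha_c = \alpha_\fc \prod_{\pri \mid p,\, \pri \nmid \fc} \alpha_\pri$, the $p$-adic integral rewrites as
\[
L_p(f,\varphi) = \alpha_\fc^{-1} \prod_{\pri \mid p,\, \pri \nmid \fc} \alpha_\pri^{-1}\cdot \mathrm{ev}_{\varphi_{(p)}}\big(\mathrm{Ev}_\lambda^c(\Phi_f)\big),
\]
which already explains the prefactor $\alpha_\fc^{-1}$ in the interpolation formula.

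Next, by Strategy \ref{strategy}(2) -- realised here by the construction of $\mathrm{Ev}_\lambda^c$ from the abstract evaluations of \S\ref{ss: CM abstract evaluation maps} paired with the Galois evaluations $\beta_y$ of \S\ref{sec:galois evaluations} -- the composition $\mathrm{ev}_{\varphi_{(p)}} \circ \mathrm{Ev}_\lambda^c$ applied to an overconvergent class can be rewritten as $\mathrm{Ev}_\varphi \circ \rho$, up to explicit correction terms coming from the mismatch between $c$ and $\fc$. The control theorem (Theorem \ref{thm:small-slope}) gives $\rho(\Phi_f) = \phi_f$, and Proposition \ref{p: CM L-values and evaluations} then delivers $\mathrm{Ev}_\varphi(\phi_f) = (-1)^r |D|\tau(\varphi)\Lambda(f,\varphi)/(2^d \Omega_f)$, producing the bulk of the interpolation formula.

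The main obstacle is pinning down the two remaining correction factors $\varpi_\fc^{j+v}$ and $Z_\pri$. The twist $\varpi_\fc^{j+v}$ arises from the $\beta_y$ construction, which identifies the fibre $\mathrm{Gal}_p[y] \subset \mathrm{Gal}_p$ with a translate of a subgroup of $\cO_p$ via a choice of uniformiser $\varpi_\fc$; evaluating the monomial $v_{j+v}$ on the resulting shifted representative, versus the idelic representative $x_a$ used in Proposition \ref{p: CM L-values and evaluations}, differs precisely by the archimedean factor $\varpi_\fc^{j+v}$ matching the infinity type of $\varphi$. The factors $Z_\pri$ at primes $\pri \mid p$ with $\pri \nmid \fc$ come from summing the evaluation $\mathrm{Ev}_\lambda^c$ over the fibres of $\mathrm{Cl}_E(c) \to \mathrm{Cl}_E(\fc)$ weighted by $\varphi_{(p)}$: unramifiedness of $\varphi$ at such $\pri$ together with the $U_\pri$-eigenproperty collapses each such fibre to a single factor $\varphi_{(p)}(\varpi_\pri)(1-\alpha_\pri^{-1}\varphi(\pri)^{-1})$, the $\alpha_\pri^{-1}$ absorbing the extra prefactor in the display above. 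This bookkeeping of uniformisers, characters, and class group fibres is the technical heart of the argument, carried out in detail in \cite[\S10-\S12]{BW_CJM}.
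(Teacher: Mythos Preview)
Your proposal is correct and in fact more detailed than the paper's treatment: the paper does not give a proof of this theorem at all, simply citing it as \cite[Thm.~12.1]{BW_CJM}. Your sketch accurately outlines the argument carried out in \cite[\S10--\S12]{BW_CJM}, correctly identifying the horizontal compatibility, the passage through the control theorem and Proposition~\ref{p: CM L-values and evaluations}, and the origin of the correction factors $\varpi_{\fc}^{j+v}$ and $Z_{\pri}$.
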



\subsection{Top row I: Existence of families}
We now prove the existence of $p$-adic families for $G$ arising as the base-change of those for $G^+$. 

\subsubsection{Weight spaces, revisited}
We can be more precise about the weight spaces involved. In \S\ref{sec:survey}, a weight for $G^+$ was a character on $T^+(\Zp)$. An algebraic weight then has the form $(k+v,v)$ for $k,v \in \Z[\Sigma_{E^+}]$, corresponding to the character
\[
\smallmatrd{z}{}{}{w} = \smallmatrd{(z_\sigma)_{\sigma}}{}{}{(w_\sigma)_{\sigma}} \longmapsto \prod_{\sigma} z_\sigma^{k_\sigma + v_\sigma} w_\sigma^{v_\sigma},
\]
where $\sigma$ runs over $\Sigma_{E^+}$. It is customary to work with a modified version of this; it is classically known \cite{Har87} that cuspidal automorphic representations for $G^+$ must have \emph{pure} weights, with $k+2v \in \Z[\Sigma_{E^+}]$ parallel (in the sense that $(k+2v)_\sigma= \sw$ for all $\sigma\in \Sigma_{E^+}$, for some $\sw \in \Z$ called the \emph{purity weight}). In particular, we may instead represent $(k+v,v)$ as $(k,\sw)$, since this entirely determines $v$. Since changing $\sw$ then corresponds to twisting by some power of the determinant, which corresponds only to changing the central character of $\pi$ by the norm, we may consider $\sw$ fixed and the weight to be determined by $k$. In particular, we may redefine $\cW^+$ to be the rigid space such its points are given by
\[
\cW^+(L) \defeq \mathrm{Hom}_{\mathrm{cts}}([\cO_{E^+}\otimes_{\Z}\Zp]^\times,L^\times),
\]
a $d$-dimensional space that captures all the interesting $p$-adic variation of weights. We similarly define $\cW^{\mathrm{full}}$ such that $\cW^{\mathrm{full}}(L) \defeq \mathrm{Hom}_{\mathrm{cts}}([\cO_{E}\otimes_{\Z}\Zp]^\times,L^\times)$, the full $2d$-dimensional weight space for $E$. We are primarily interested in a smaller subset: in particular, composing with the norm $\cO_E \to \cO_{E^+}$ gives a map $\cW^+ \hookrightarrow \cW^{\mathrm{full}}$ of rigid spaces, and we define
\[
\cW \defeq \mathrm{Image}(\cW^+) \subset \cW^{\mathrm{full}}.
\]
This is the space of `conjugate-invariant' weights i.e. the weights $\lambda = k \in \cW^{\mathrm{full}}$ with $k_\sigma = k_{c\sigma}$ for all $\sigma \in \Sigma[E]$, where $c$ is complex conjugation. After twisting $\pi^+$ (resp.\ $\pi$) by some power of the norm for $E^+$ (resp.\ $E$), we may consider it to have weight $\lambda^+ \in \cW^+$ (resp.\ $\lambda \in \cW$). Such a twist respects the property of being non-critical slope, since the $p$-adic valuations of the eigenvalues and the small-slope bound are translated by the same value.

\begin{remark}
	Again, there are level considerations here to ensure non-trivial local systems. However, we have already mitigated this by working in level $K_1$ and restricting to pure weights: if $\lambda^+ \in \cW^+$, then $\lambda(Z(G^+(\Q)) \cap K^+_1(\n)) = 1$.
\end{remark}

\subsubsection{The $p$-adic base-change map}\label{sec:base change transer}
\label{sec:p-adic base-change}
Let $f^+$ and $f$ be as in \S\ref{sec:preliminaries}, of weights $\lambda^+$ and $\lambda$, and suppose that $f$ is \emph{non-critical}. Our running assumptions, as explained in \S\ref{sec:preliminaries}, imply that $\mathrm{dim}_L \hc{d}(Y_1(\n),\sV_\lambda^\vee)_{f} = 1$. Let $\Omega^+ \subset \W^+$ be a ($d$-dimensional) affinoid neighbourhood of $\lambda^+$ in the Hilbert weight space, and let $\cE_{\Omega^+,h^+}^{\bullet,+}$ denote the local piece of the Hilbert eigenvariety over $\Omega^+$ (built from the total cohomology, as in \cite{Han17}). Let $\Omega$ denote the image of $\Omega^+$ in $\cW$, and let $\cU$ be a ($2d$-dimensional) affinoid neighbourhood of $\Omega$ in $\cW^{\mathrm{full}}$.

\begin{theorem}\label{t:transfert}
	For $h \geq 2h^+$, there is a finite map
	\[
	\BC : \cE^{\bullet, +}_{\Omega^{+}, h^+} \longrightarrow \cE^\bullet_{\cU, h}
	\]
	of $L$-rigid analytic spaces over $L$, interpolating base-change transfer on classical points.
\end{theorem}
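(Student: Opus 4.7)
The plan is to construct $\BC$ at the level of coordinate rings as an $L$-algebra map $\BC^{\ast}\colon \bT^\bullet_{\cU,h}\to\bT^{\bullet,+}_{\Omega^+,h^+}$. First, I would note that the norm embedding $\cW^+\hookrightarrow\cW^{\mathrm{full}}$ identifies $\Omega^+$ with a closed subvariety $\Omega\subset\cU$, equipping $\bT^{\bullet,+}_{\Omega^+,h^+}$ with a natural $\cO(\cU)$-algebra structure via the induced surjection $\cO(\cU)\twoheadrightarrow\cO(\Omega^+)$. Next, the standard recipe for base-change on Satake parameters yields a $\Z$-algebra homomorphism $\mathrm{bc}\colon\bH_{K_1(\n)}\to\bH_{K_1(\n^+)}$: concretely, at a prime $\mathfrak{q}^+\nmid\n^+p$ splitting as $\mathfrak{q}\bar{\mathfrak{q}}$ one sets $\mathrm{bc}(T_\mathfrak{q})=\mathrm{bc}(T_{\bar{\mathfrak{q}}})=T_{\mathfrak{q}^+}$, whilst if $\mathfrak{q}^+$ is inert with $\mathfrak{q}=\mathfrak{q}^+\cO_E$ one sets $\mathrm{bc}(T_\mathfrak{q})=T_{\mathfrak{q}^+}^{2}-2S_{\mathfrak{q}^+}$ and $\mathrm{bc}(S_\mathfrak{q})=S_{\mathfrak{q}^+}^{2}$, with analogous formulas at ramified primes and at the $U_\mathfrak{p}$-operators for $\mathfrak{p}\mid p$. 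Tensoring with the weight map and projecting, this produces a ring homomorphism $\widetilde{\mathrm{bc}}\colon\bH_{K_1(\n)}\otimes_{\Z}\cO(\cU)\longrightarrow\bT^{\bullet,+}_{\Omega^+,h^+}$.

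The heart of the proof will be to show that $\widetilde{\mathrm{bc}}$ factors through the quotient onto $\bT^\bullet_{\cU,h}$, which will provide the desired $\BC^{\ast}$. For this I would check that every polynomial relation among Hecke operators vanishing in $\bT^\bullet_{\cU,h}$ has vanishing image in $\bT^{\bullet,+}_{\Omega^+,h^+}$ under $\widetilde{\mathrm{bc}}$, by a Zariski-density argument on the classical points of $\cE^{\bullet,+}_{\Omega^+,h^+}$. Since $G^+(\R)$ admits discrete series, we have $\ell=0$ and Conjecture \ref{conj:dim} holds unconditionally, so each cuspidal component of $\cE^{\bullet,+}_{\Omega^+,h^+}$ has full dimension $d=\mathrm{dim}(\Omega^+)$; combined with Theorem \ref{thm:small-slope} applied at the Zariski-dense set of classical pure weights in $\Omega^+$, this gives a Zariski-dense supply of non-critical classical Hilbert eigenforms $f^+$. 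For each such $f^+$, the Langlands base-change $f$ is a classical cohomological cuspidal eigenform on $G$ whose Hecke eigensystem is, by construction, the $\widetilde{\mathrm{bc}}$-image of $f^+$'s; the slope bound $h\geq 2h^+$ (with the factor of two attained at inert primes) ensures that the $U_\mathfrak{p}$-slopes of $f$ are at most $h$, so $f$ defines a point of $\cE^\bullet_{\cU,h}$. Hence relations vanishing at $f$ vanish, under $\widetilde{\mathrm{bc}}$, at $f^+$. This descent step is the principal obstacle: it rests crucially on the unconditional density of classical points, which fails in the bad ranges of $\ell$ relevant elsewhere in the paper.

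Finiteness is then formal. The closed immersion $\Omega^+\cong\Omega\hookrightarrow\cU$ makes $\cO(\Omega^+)$ a finite $\cO(\cU)$-module, and $\bT^{\bullet,+}_{\Omega^+,h^+}$ is finite over $\cO(\Omega^+)$ by the construction of the eigenvariety, hence finite over $\cO(\cU)$; since the $\cO(\cU)$-structure factors through $\bT^\bullet_{\cU,h}$, one obtains that $\bT^{\bullet,+}_{\Omega^+,h^+}$ is a finite $\bT^\bullet_{\cU,h}$-module, which is exactly the statement that $\BC$ is a finite map of rigid spaces. The interpolation of base-change on classical points is then immediate from the defining formulas for $\mathrm{bc}$.
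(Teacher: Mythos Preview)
Your approach is essentially that of the paper, which simply appeals to the general $p$-adic functoriality machinery of Johansson--Newton (going back to Chenevier): both arguments hinge on the Zariski-density of non-critical-slope classical points in the Hilbert eigenvariety, available precisely because $G^+(\R)$ has discrete series so $\ell=0$, combined with classical cyclic base-change and the slope bound $h\geq 2h^+$. You have unpacked the construction explicitly at the level of Hecke algebras rather than citing the black box, but the underlying idea is identical.

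There is, however, one technical point you gloss over in the descent step. Your Zariski-density argument shows that for $R$ in the kernel of $\bH_{K_1(\n)}\otimes\cO(\cU)\to\bT^\bullet_{\cU,h}$, the image $\widetilde{\mathrm{bc}}(R)$ vanishes at a Zariski-dense set of closed points of $\cE^{\bullet,+}_{\Omega^+,h^+}$; this only forces $\widetilde{\mathrm{bc}}(R)$ into the nilradical of $\bT^{\bullet,+}_{\Omega^+,h^+}$, not to zero. As written you therefore obtain $\BC$ only on the nilreduction $(\cE^{\bullet,+}_{\Omega^+,h^+})^{\mathrm{red}}$. This is in fact exactly what the general interpolation theorems deliver, and it suffices for every application in the paper (which only uses $\BC$ to transport points and irreducible components), but you should either state the conclusion in that form or supply a separate argument that $\bT^{\bullet,+}_{\Omega^+,h^+}$ is reduced.
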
 
\begin{proof}
	This kind of $p$-adic Langlands functoriality was first explored by Chenevier; we use the very general formulation of \cite{JoNew}. The idea is to interpolate such a transfer on a Zariski-dense set of non-critical-slope classical points, which we have from the theory of automorphic base-change and Theorem \ref{thm:total cohom}. Firstly observe that both $\cE_{\Omega^+,h^+}^{\bullet,+}$ and $\cE_{\cU,h}^\bullet$ arise from \emph{eigenvariety data}, namely $(\Omega^+,\sZ^+,\sM^{\bullet,+}_{\Omega^+,h^+},\bH_{K^+}^+,\psi^+)$ and $(\cU,\sZ,\sM^\bullet_{\cU,h},\bH_K,\psi)$, for notation as in \cite[\S4.3]{Han17}. These are related in \cite[\S4.3]{JoNew}, formalising base-change transfer on the eigenvariety data; and then the map is constructed in \cite[Thm.~3.2.1]{JoNew}. We note that if a Hilbert modular form has slope $h^+$, then its base-change has slope $h \leq 2h^+$, and that any family will be non-critical slope at a Zariski-dense set of classical weights.
\end{proof}

We remark that the theory in \cite{JoNew} is stated in the language of adic spaces, but can be used in the language of rigid analytic spaces, which form a fully faithful subcategory of the category of adic spaces \cite{Hub94}.

\begin{proposition}\label{prop:pt}
	There is a point $x_{f^+} \in \cE^{\bullet,+}_{\Omega^+,h^+}$ corresponding to $f^+$. 
\end{proposition}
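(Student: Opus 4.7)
The plan is to invoke Theorem \ref{thm:total cohom} (the Urban--Hansen result) directly, since our running assumptions on $f^+$ match its hypotheses. First, by the assumption that $f^+$ is non-critical (Definition \ref{def:non-critical}), the localisation map
\[
\rho_{\m_{f^+}} : \hc{i}(Y_1(\n^+), \sD_{\lambda^+})_{f^+} \isorightarrow \hc{i}(Y_1(\n^+), \sV_{\lambda^+}^\vee)_{f^+}
\]
is an isomorphism for every $i$. In particular, the classical eigenclass $\phi_{f^+} \in \hc{d}(Y_1(\n^+), \sV_{\lambda^+}^\vee)_{f^+}$ lifts to a non-zero overconvergent class, so $\m_{f^+}$ occurs in $\hc{\bullet}(Y_1(\n^+), \sD_{\lambda^+})$. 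This verifies the hypothesis of Theorem \ref{thm:total cohom}.

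Applying that theorem with $\cU = \Omega^+$ yields that $\m_{f^+}$ occurs in $\hc{i}(Y_1(\n^+), \sD_{\Omega^+})$ for some $i \in [d, 2d]$. The slope $h^+$ is (implicitly) chosen to be at least the $p$-adic valuation of the $U_p$-eigenvalue of $f^+$, so after taking the slope $\leq h^+$ decomposition -- which is preserved by the action of the Hecke algebra -- the ideal $\m_{f^+}$ still occurs in $\hc{i}(Y_1(\n^+), \sD_{\Omega^+})^{\leq h^+}$, and hence in the total slope $\leq h^+$ cohomology used to define $\cE^{\bullet,+}_{\Omega^+,h^+}$.

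By the description of local pieces of eigenvarieties in \S\ref{sec:local geometry}, the occurrence of $\m_{f^+}$ in $\hc{\bullet}(Y_1(\n^+), \sD_{\Omega^+})^{\leq h^+}$ produces a non-trivial image in $\bT^{\bullet,+}_{\Omega^+, h^+}$, that is, a maximal ideal of $\bT^{\bullet,+}_{\Omega^+,h^+}\otimes_L \overline{L}$, which by definition is a point $x_{f^+} \in \cE^{\bullet,+}_{\Omega^+,h^+}$ (defined over a finite extension of $L$). There is no serious obstacle: all the heavy lifting is done by Theorem \ref{thm:total cohom}, and the remaining verification is a routine combination of non-criticality and the compatibility of slope decompositions with the Hecke action.
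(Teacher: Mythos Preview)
Your argument is essentially the paper's own approach in the non-critical case: both you and the paper invoke the Urban--Hansen result (your Theorem \ref{thm:total cohom}, the paper's citation to \cite[Thm.~5.4.4]{Urb11}) to conclude that $\m_{f^+}$ appears in the overconvergent cohomology over $\Omega^+$, hence yields a point of $\cE^{\bullet,+}_{\Omega^+,h^+}$.

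There is one point of care worth flagging. You invoke ``the assumption that $f^+$ is non-critical'', but at this stage of \S\ref{sec:p-adic base-change} the running hypothesis is only that the base-change $f$ is non-critical; non-criticality of $f^+$ is not formally assumed here (cf.\ the opening line of \S\ref{sec:p-adic base-change}). The paper's proof is written to accommodate this: it notes that one \emph{expects} $f^+$ to be non-critical whenever $f$ is, but nevertheless supplies a separate argument for the possibility that $f^+$ is critical, namely by comparing the overconvergent-cohomology eigenvariety with the eigenvariety built from overconvergent Hilbert modular forms (where classical forms always contribute points), via another $p$-adic transfer theorem as in \cite{BH17}. Your proof covers only the non-critical case and omits this fallback. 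If you are happy to take non-criticality of $f^+$ as a standing assumption (as is done in the introduction), then your argument is complete and matches the paper; otherwise you are missing the critical case.
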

\begin{proof}
	This is \cite[Thm.~5.4.4]{Urb11} for $f^+$ non-critical. Since $f$ is non-critical, we expect $f^+$ should also be non-critical; but in any case, if it is critical, then the existence of $x_{f^+}$ can be deduced by equating the eigenvarieties from overconvergent modular forms and overconvergent cohomology again via a $p$-adic transfer theorem, noting that as the classical forms are a subspace of overconvergent forms, the critical point must appear in the eigenvariety of modular forms. Such critical points were studied in \cite{BH17}. 
\end{proof}

Now if $\cV^+$ is a component of $\cE_{\Omega^+,h^+}^{\bullet,+}$ through $x_{f^+}$, then its transfer $\cV \defeq \mathrm{BC}(\cV^+) \subset \cE_{\cU,h}^\bullet$ is a family through $x_f$ over $\Omega = \mathrm{Im}(\Omega^+) \subset \cW$. In particular, writing $\Omega = \Sp(\cO(\cU)/\fP)$, we see that $\fP \subset \m_{f}$.

What this does \emph{not} tell us, however, is whether $\cV$ is itself an irreducible component of $\cE_{\cU,h}^{\bullet}$, or if it is in fact a proper subspace of a (higher-dimensional) component. Algebraically, this would mean that $\fP$ is not a minimal prime in $\m_{f}$. The non-abelian Leopoldt conjecture (Conjecture \ref{conj:dim}) predicts that in fact, $\cV$ will always be a component.

\subsection{Top row II: descent to bottom degree}\label{sec:descent}
The construction of $p$-adic $L$-functions in the present setting is performed in the bottom degree $d$ in which automorphic representations appear. The appearance of $\m_f$ in $\hc{d}(Y_1(\n),\sD_{\cU})$ for some affinoid $\cU$, however, is far from obvious, since the general formalism is not precise about degrees. To ensure the existence of a class, for the rest of the paper, \emph{we assume that Conjecture \ref{conj:dim} holds}. In particular, since in our setting the top and bottom degrees are $2d$ and $d$ respectively, and $\mathrm{dim}(\cW^{\mathrm{full}}) = 2d$, every component through $f$ has dimension $d$.

Let now $\cU \subset \cW^{\mathrm{full}}$ be an affinoid neighbourhood of $\lambda$ with maximal dimension (that is, $\mathrm{dim}(\cU) = \mathrm{dim}(\cW^{\mathrm{full}})$). Let $\m_{f} \subset \bT_{\cU,h}^\bullet$ be the maximal ideal corresponding to $f$, which exists by Theorem \ref{thm:total cohom}, and let $\fp_{\mathrm{min}} \subset \m_{f}$ be a minimal prime ideal, corresponding to an irreducible component $\cV$ of the eigenvariety through $x_f$. Let $\fP_{\mathrm{min}} = \cO(\cU)\cap \fp_{\mathrm{min}}$ denote its contraction to $\cO(\cU)$, so that $\cV$ lies over the subspace $\Omega \defeq \Sp(\cO(\cU)/\fP_{\mathrm{min}}) \subset \cW$. 

The localisation $\cO(\cU)_{\fP_{\mathrm{min}}}$ is a regular local ring, and its maximal ideal $\fP_{\mathrm{min}}$ is generated by a regular sequence $(x_1,x_2,...,x_{d})$, where the sequence has $d$ terms since we assume Conjecture \ref{conj:dim} holds for $\cV$. Up to shrinking $\cU$, we may assume that each of the $x_i$ is in $\cO(\cU)$. Define $\cU^0 \defeq \cU$, and for $i = 1,...,d$, let
\[
\cU^i \defeq \Sp\big[\cO(\cU)/(x_1,...,x_i)\big].
\]
Then $\cU = \cU^0 \supset \cU^1 \supset \cdots \supset \cU^{d} = \Omega$.

The following non-vanishing, due to Newton \cite[App.~B]{Han17}, is the reason for assuming the non-abelian Leopoldt conjecture.

\begin{proposition}\label{prop:non-vanishing}
	Suppose Conjecture \ref{conj:dim} holds for $\cV$. Then for each $i = 0,...,d$, we have
	\[
	\hc{2d-i}(Y_1(\n),\sD_{\cU^i})_{f}^{\leq h} \neq 0.
	\]
\end{proposition}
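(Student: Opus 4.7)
The plan is a descending induction on $i$, beginning at the top cohomological degree $2d$ in the full-dimensional weight space $\cU$ and using the regular sequence $(x_1, \dots, x_d)$ to drop one cohomological degree per step. For each $i$, the short exact sequence of coefficient modules
\[
0 \longrightarrow \sD_{\cU^i} \xrightarrow{\cdot x_{i+1}} \sD_{\cU^i} \longrightarrow \sD_{\cU^{i+1}} \longrightarrow 0
\]
(which is exact since $x_{i+1}$ is a non-zero-divisor in $\cO(\cU^i) = \cO(\cU)/(x_1, \dots, x_i)$, by the chosen regularity of the $x_j$'s in the localisation at $\fP_{\mathrm{min}}$) yields, after compactly supported cohomology and the exact operations of localisation at $\m_f$ and slope-$\leq h$ decomposition, a long exact sequence. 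Writing $M_i \defeq \hc{2d-i}(Y_1(\n), \sD_{\cU^i})_f^{\leq h}$, the piece I would exploit is
\[
M_{i+1} \xrightarrow{\ \delta\ } M_i \xrightarrow{\cdot x_{i+1}} M_i.
\]

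For the base case $i = 0$, I would combine Theorem \ref{thm:total cohom}, which places $\m_f$ somewhere in the total overconvergent cohomology $\hc{\bullet}(\sD_\cU)^{\leq h}$ since $f$ is non-critical, with Newton's observation in \cite[App.~B]{Han17}, already cited in \S\ref{sec:top row II}, that over a full-dimensional affinoid in weight space the cuspidal overconvergent cohomology is supported in top degree $2d$. Together these force $M_0 \neq 0$. The inductive step then reduces to showing multiplication by $x_{i+1}$ has nontrivial kernel on $M_i$: the image of the connecting map $\delta$ is exactly this kernel, so any nonzero kernel element lifts to a nonzero element of $M_{i+1}$.

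To produce a kernel element, I would argue that $x_{i+1}$ acts \emph{nilpotently} on $M_i$. The module $M_i$ is finitely generated over the relevant Hecke algebra and, via the weight structure map, is finitely generated over $\cO(\cU^i)$. Its support in $\cU^i$ is constrained by the image of the eigenvariety component $\cV$ through $x_f$: by Conjecture \ref{conj:dim}, $\cV$ has dimension $d$, and its weight image is contained in $\Omega = V(x_{i+1}, \dots, x_d) \subset \cU^i$, so $x_{i+1}$ vanishes on the support of $M_i$. Hence $x_{i+1} \in \sqrt{\mathrm{Ann}_{\cO(\cU^i)}(M_i)}$ and acts nilpotently. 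With $M_i \neq 0$ by the inductive hypothesis, its $x_{i+1}$-torsion is nonzero, and therefore $M_{i+1} \neq 0$.

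The main obstacle is justifying the support constraint rigorously, since the non-abelian Leopoldt conjecture as stated concerns components of the \emph{total}-cohomology eigenvariety, whereas $M_i$ is controlled by the \emph{single-degree} eigenvariety in degree $2d-i$. A clean way to bridge this is to track through the induction that the support of $M_i$ in $\cU^i$ is contained in the image of $\cV$, for instance by comparing $M_i$ with the corresponding localisation of the total cohomology via the edge maps and exploiting the concentration statement in the base case. Once this support statement is in place, the rest of the argument is formal.
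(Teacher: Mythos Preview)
Your overall strategy matches the argument the paper cites from \cite[App.~B]{Han17}: induction via the long exact sequences attached to $0\to\sD_{\cU^i}\xrightarrow{x_{i+1}}\sD_{\cU^i}\to\sD_{\cU^{i+1}}\to 0$, with concentration in top degree (Newton's remark) supplying the base case. The genuine gap is in your inductive step. You claim $x_{i+1}$ acts nilpotently on all of $M_i$ by arguing its support lies in the weight image of $\cV$. But localising at $\m_f$ does not cut the cohomology down to a single component: there may be other irreducible components through $x_f$ whose weight images are not contained in $\Omega=V(x_1,\dots,x_d)$, and on their contributions $x_{i+1}$ need not be nilpotent. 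You flag a version of this difficulty, but the resolution you sketch (tracking support through edge maps) does not establish the claimed containment and cannot, since it is generally false.

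The fix is to use concentration far more heavily than just for $M_0\neq 0$. The vanishing $\hc{j}(\sD_\cU)_f^{\leq h}=0$ for $j<2d$ propagates through your exact sequences to give $\hc{j}(\sD_{\cU^i})_f^{\leq h}=0$ for all $j<2d-i$; hence the connecting maps are not just surjections onto torsion but isomorphisms $M_{i+1}\cong M_i[x_{i+1}]$, and iterating yields $M_i\cong M_0[(x_1,\dots,x_i)]$. Now concentration also says $M_0$ is the \emph{entire} localised overconvergent cohomology, hence faithful over $(\bT^\bullet_{\cU,h})_f$; so the minimal prime $\fp_{\mathrm{min}}$ defining $\cV$ is an associated prime of $M_0$, and there exists $0\neq m\in M_0$ with $\mathrm{Ann}(m)=\fp_{\mathrm{min}}\supseteq\fP_{\mathrm{min}}=(x_1,\dots,x_d)$. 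Then $m\in M_0[(x_1,\dots,x_i)]\cong M_i$ for every $i\le d$. This only uses torsion coming from the one component $\cV$---exactly matching the hypothesis---rather than nilpotency on the whole module.
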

\begin{proof}
	This is precisely \cite[Prop.~B.3]{Han17}, combined with the remark at the end of App.~B \emph{op.\ cit}., which means that if $\mathrm{dim}(\cV) = d$, the minimal degree $r$ such that $\hc{r}(Y_1(\n),\sD_{\cU})_{f}^{\leq h} \neq 0$ is equal to the top degree $2d$. 
\end{proof}
\begin{remark}
	We briefly indicate \emph{why} Conjecture \ref{conj:dim} is required. Generally, when working in the top degree, specialisation of the weight is surjective on the cohomology, since the codomain is in $\hc{2d+1} = 0$. This gives the non-vanishing of $\hc{2d}(Y_1(\n),\sD_{\cU})^{\leq h}_{f}$ by Nakayama. Then for each $i$, the short exact sequence $0 \to \cD_{\cU^i} \to \cD_{\cU^i} \to \cD_{\cU^{i+1}} \to 0$ gives rise to a connecting surjective map
	\[
	\hc{j}(Y_1(\n),\sD_{\cU^{i+1}})^{\leq h}_{f} \longrightarrow \hc{j+1}(Y_1(\n),\sD_{\cU^i})^{\leq h}_{f}[x_{i+1}].
	\]
	For appropriate $j$, as in the theorem, one shows this is an isomorphism: so `torsion in degree $j+1$ gives classes in degree $j$'. The conjecture then implies that `the torsion is maximal', so that we can descend maximally to bottom degree.
\end{remark}

\begin{definition}
	Let $\fP \subset \cO(\cU)$ be a prime, and define $\Sigma \defeq \Sp(\cO(\cU)/\fP)$. We say $f$ \emph{varies in a family over $\Sigma$} if there exists a prime $\pri \subset \m_{f}$ such that $\fP = \pri \cap \cO(\Sigma)$. The family is maximal if $\pri$ is a minimal prime.
\end{definition}

Let $\Omega \subset \cW$ be small enough that $f$ varies in a family over $\Omega$; then by definition, if $\Sigma\subset\Omega$ is any Zariski-closed subspace containing $\lambda$, then $f$ varies in a family over $\Sigma$. (In the sequel, we wish to apply this to one-dimensional affinoids inside $\Omega$). If $\lambda \in \cW$, we use a subscript  $\cO(-)_\lambda$ for the (algebraic) localisation of $\cO(-)$ at $\m_\lambda$.

\begin{corollary}\label{cor:non-vanishing}
	Suppose Conjecture \ref{conj:dim} holds, and $f$ varies in a family over an affinoid $\Sigma\subset\cW^{\mathrm{full}}.$ Then
	\[
	\hc{d}(Y_1(\n),\sD_{\Sigma})^{\leq h}_{f} \neq 0.
	\]
\end{corollary}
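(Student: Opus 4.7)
The plan is to combine Proposition \ref{prop:non-vanishing} with an inductive descent along a regular sequence. First I would embed $\Sigma$ into a maximal family at $x_f$, then apply Proposition \ref{prop:non-vanishing} to get a non-zero class in degree $d$ over the maximal family base, and finally descend to $\Sigma$ using short exact sequences and Nakayama's lemma.

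For the first step, since $f$ varies in a family over $\Sigma$, by definition there is a prime $\mathfrak{q} \subset \m_f$ with $\fP_\Sigma = \mathfrak{q} \cap \cO(\cU)$. Choose any minimal prime $\fp_{\mathrm{min}} \subset \mathfrak{q}$ (still contained in $\m_f$), and set $\fP_{\mathrm{min}} \defeq \fp_{\mathrm{min}} \cap \cO(\cU) \subset \fP_\Sigma$. The corresponding maximal family $\cV_{\mathrm{max}}$ through $x_f$ has base $\Omega \defeq \Sp(\cO(\cU)/\fP_{\mathrm{min}})$, and $\Sigma \subset \Omega$. Under Conjecture \ref{conj:dim} we have $\dim \cV_{\mathrm{max}} = d$, so Proposition \ref{prop:non-vanishing} (with $i=d$) yields $\hc{d}(Y_1(\n),\sD_\Omega)^{\leq h}_f \neq 0$.

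For the descent, in the setting of Theorem \ref{thm:intro} the space $\Sigma$ is smooth at $\lambda$, hence a local complete intersection in $\Omega$; choose a regular sequence $(f_1,\dots,f_m)$ in $\cO(\Omega)_\lambda$ of length $m = \dim\Omega - \dim\Sigma$ cutting out $\Sigma$, and set $\Sigma_i \defeq V(f_1,\dots,f_i)$, so $\Sigma_0 = \Omega$ and $\Sigma_m = \Sigma$. I argue by induction on $i$ that $\hc{d}(\sD_{\Sigma_i})^{\leq h}_f \neq 0$. Since $\sD_{\Sigma_i}$ is flat over $\cO(\Sigma_i)$ and $f_{i+1}$ is a non-zero-divisor, the short exact sequence
\[
0 \to \sD_{\Sigma_i} \xrightarrow{f_{i+1}} \sD_{\Sigma_i} \to \sD_{\Sigma_{i+1}} \to 0
\]
of $K_1(\n)$-modules gives a long exact sequence in cohomology, part of which reads
\[
\hc{d}(\sD_{\Sigma_i})^{\leq h}_f \xrightarrow{f_{i+1}} \hc{d}(\sD_{\Sigma_i})^{\leq h}_f \to \hc{d}(\sD_{\Sigma_{i+1}})^{\leq h}_f.
\]
The middle term is finitely generated over $\cO(\Sigma_i)_\lambda$ (by compactness of $U_p$) and non-zero by the inductive hypothesis; since $\lambda \in \Sigma_{i+1}$, the element $f_{i+1}$ vanishes at $\lambda$ and so lies in $\m_\lambda$. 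Nakayama's lemma then forces the cokernel of multiplication by $f_{i+1}$ to be non-zero, and this cokernel injects into $\hc{d}(\sD_{\Sigma_{i+1}})^{\leq h}_f$, closing the induction.

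The main obstacle is ensuring the existence of the regular sequence cutting $\Sigma$ out of $\Omega$ at $\lambda$. When $\Sigma$ is smooth at $\lambda$ this is straightforward: $\cO(\cU)_\lambda$ is a regular (hence Cohen--Macaulay) local ring since $\cW^{\mathrm{full}}$ is smooth, so the sequence $(x_1,\dots,x_d)$ cutting out $\fP_{\mathrm{min}}$ can be extended to a regular system of parameters cutting $\Sigma$ out of $\cU$. For more degenerate $\Sigma$ the inductive SES argument would need to be replaced by a hyper-$\mathrm{Tor}$ spectral sequence for $\sD_\Sigma = \sD_\Omega \otimes^L_{\cO(\Omega)}\cO(\Sigma)$, but this level of generality is not required for the applications in \S\ref{sec:CM families}.
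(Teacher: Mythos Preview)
Your approach is essentially the same as the paper's: embed $\Sigma$ into the base $\Omega$ of a maximal component via a minimal prime, use Proposition~\ref{prop:non-vanishing} to get non-vanishing over $\Omega$, then descend along a regular sequence using the short exact sequences $0 \to \sD_{\Sigma_i} \to \sD_{\Sigma_i} \to \sD_{\Sigma_{i+1}} \to 0$ and Nakayama. The paper argues by contradiction going backward from $\Sigma$ to $\Omega$, you argue forward, but this is cosmetic.

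The one substantive difference is where you localise to obtain the regular sequence. You work in $\cO(\cU)_\lambda$ (or $\cO(\Omega)_\lambda$) and therefore need $\Sigma$ smooth at $\lambda$ so that the ideal $\fP_\Sigma$ is a complete intersection there; you flag this as an obstacle and restrict to the smooth case (sufficient for the applications). The paper instead localises at the prime $\fP_\Sigma$ itself: since $\cU$ is an open in the smooth weight space, $\cO(\cU)_{\fP_\Sigma}$ is automatically a regular local ring, so its maximal ideal $\fP_\Sigma$ is generated by a regular sequence $(x_1,\dots,x_r)$ with no hypothesis on $\Sigma$. One then checks that $(x_1,\dots,x_d)$ can be taken to generate $\fP_{\mathrm{min}}$ (after shrinking $\cU$ so the $x_i$ lie in $\cO(\cU)$, and using that $\cO(\cU)_{\fP_\Sigma}$ is Cohen--Macaulay to extend). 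This removes the smoothness hypothesis entirely, so your suggested fallback to a hyper-$\mathrm{Tor}$ spectral sequence for degenerate $\Sigma$ is unnecessary.
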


\begin{proof}
	From the definition, $\Sigma = \mathrm{Sp}(\cO(\cU)/\fP)$ for some prime $\fP$. Let $\fp_{\mathrm{min}}$ be a minimal prime of $\bT_{\cU,h}^\bullet$ contained in $\fP \cdot \bT_{\cU,h}^\bullet$, which is thus contained in $\m_{f}$ by definition of $\fP$. Let $\fP_{\mathrm{min}}$ denote its contraction to $\cO(\cU)$, and let $\Omega \defeq \Sp(\cO(\cU)/\fP_{\mathrm{min}})$. By Conjecture \ref{conj:dim} and Proposition \ref{prop:non-vanishing}, we know $\hc{d}(Y_1(\n),\sD_\Omega)^{\leq h}_{f} \neq 0$.
	
	The localisation $\cO(\cU)_{\fP}$ is a regular local ring with maximal ideal $\fP\cO(\cU)_{\fP}$; let $(x_1,...,x_r)$ be a regular sequence generating this ideal. Up to shrinking $\cU$, we may suppose $x_i \in \cO(\cU)$. Necessarily $r \geq d$, and we may choose this sequence so that $(x_1,...,x_d)$ generates $\fP_{\mathrm{min}}$ (and is thus the regular sequence chosen in the proof of \cite[Prop.~B.3]{Han17}). For $i = 0,...,r-d$, let
	\[
	\Omega^i \defeq \Sp\big[\cO(\cU)/(x_1,...,x_{d+i})\big].
	\]
	Then $\Omega^0 = \Omega$ and $\Omega^{r-d} = \Sigma$. For each $i \leq r-d-1$, we have a short exact sequence $0 \to \cD_{\Omega^i} \xrightarrow{x_{i+1}} \cD_{\Omega^i} \to \cD_{\Omega^{i+1}} \to 0$, giving a long exact sequence
	\[
	\cdots \to \hc{d}(Y_1(\n),\sD_{\Omega^i}) \xrightarrow{x_{i+1}} \hc{d}(Y_1(\n),\sD_{\Omega^i}) \longrightarrow \hc{d}(Y_1(\n),\sD_{\Omega^{i+1}}) \to \cdots,
	\]
	which, after taking small slopes, localising and truncating, gives an injection
	\begin{equation}\label{eqn:injection truncated}
		0 \to \hc{d}(Y_1(\n),\sD_{\Omega^i})^{\leq h}_{f} \otimes_{\cO(\Omega^i)_\lambda}\cO(\Omega^i)_\lambda/(x_{i+1}) \hookrightarrow \hc{d}(Y_1(\n),\sD_{\Omega^{i+1}})^{\leq h}_{f}.
	\end{equation}
	Now suppose $\hc{d}(Y_1(\n),\sD_{\Sigma})^{\leq h}_{f}$ is zero. Since $\Sigma = \Omega^{r-d}$, by \eqref{eqn:injection truncated} and Nakayama's lemma we see that $\hc{d}(Y_1(\n),\sD_{\Omega^{r-d-1}})^{\leq h}_{f}  = 0.$ Continuing by induction, since $\Omega^0 = \Omega$ we conclude that 
	\[
	\hc{d}(Y_1(\n),\sD_{\Omega})^{\leq h}_{f} = 0,
	\]
	which contradicts Proposition \ref{prop:non-vanishing}. Thus we have the required non-vanishing.
\end{proof}

Note that by Theorem \ref{t:transfert}, Proposition \ref{prop:pt} and the veracity of Conjecture \ref{conj:dim} for $G^+$ (since in this case $\ell = 0$), we know that $f$ varies in maximal-dimension families over $\cW$.

\subsection{\'Etaleness over smooth curves}\label{sec:etale}
Let $\Sigma \subset \cW$ be a one-dimensional neighbourhood of $\lambda$, and assume it is smooth at $\lambda$. (For example, $\Sigma$ could be a neighbourhood in any line through $\lambda$). 

\begin{proposition}\label{prop:free rank one}
	The space $\hc{d}(Y_1(\n),\sD_\Sigma)^{\leq h}_{f}$ is free of rank one over $\cO(\Sigma)_\lambda$. 
\end{proposition}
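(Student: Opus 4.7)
The plan is to identify $M \defeq \hc{d}(Y_1(\n),\sD_\Sigma)^{\leq h}_{f}$ as a finitely generated, $x$-torsion-free module over the DVR $\cO(\Sigma)_\lambda$ with $\dim_L(M/xM) = 1$; over a DVR, such a module is automatically free of rank one, so those three properties together give the claim. Here $x$ is a uniformiser of $\cO(\Sigma)_\lambda$, which exists because $\Sigma$ is smooth one-dimensional at $\lambda$, so the local ring there is regular of dimension one.

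The main tool will be the specialisation short exact sequence. Since $\cA_\Sigma$ (and hence its continuous dual $\cD_\Sigma$) is orthonormalisable, and so in particular flat, over $\cO(\Sigma)$ (see \cite[\S3]{Urb11}), multiplication by $x$ on $\sD_\Sigma$ is injective after localisation at $\m_\lambda$, with cokernel $\sD_\lambda$. Taking the long exact sequence in compactly supported cohomology, passing to the slope $\leq h$ subspace (a direct summand, hence exact) and localising at the Hecke-maximal ideal $\m_f$, I will obtain
\[
\cdots \to \hc{d-1}(Y_1(\n),\sD_\lambda)^{\leq h}_{f} \to M \xrightarrow{\ x\ } M \to \hc{d}(Y_1(\n),\sD_\lambda)^{\leq h}_{f} \to \cdots .
\]

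The key input, which I view as the heart of the argument, is the vanishing $\hc{d-1}(Y_1(\n),\sD_\lambda)^{\leq h}_{f} = 0$. By non-criticality of $f$ (Definition \ref{def:non-critical}, which holds by the small-slope hypothesis via Theorem \ref{thm:small-slope}) this group is isomorphic to $\hc{d-1}(Y_1(\n),\sV_\lambda^\vee)^{\leq h}_{f}$, which vanishes since cuspidal classical cohomology at $f$ lives only in degrees $[d,2d]$ (as recalled in \S\ref{sec:preliminaries}). This immediately gives $x$-torsion-freeness of $M$ together with an injection
\[
M/xM \hookrightarrow \hc{d}(Y_1(\n),\sD_\lambda)^{\leq h}_{f} \cong \hc{d}(Y_1(\n),\sV_\lambda^\vee)^{\leq h}_{f},
\]
the second isomorphism being non-criticality again. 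By \eqref{eqn:classical line} the right-hand side is one-dimensional over $L$, so $\dim_L(M/xM) \leq 1$. Conversely, Corollary \ref{cor:non-vanishing} ensures $M \neq 0$, and $M$ is finitely generated over $\cO(\Sigma)_\lambda$ (since the slope-$\leq h$ part is finite over $\cO(\Sigma)$, and the localisation at $\m_f$ is through the finite Hecke algebra), so Nakayama forces $M/xM \neq 0$. Hence $\dim_L(M/xM) = 1$ exactly.

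The only genuine subtlety is formulating flatness of $\cD_\Sigma$ over $\cO(\Sigma)_\lambda$ cleanly enough to produce the specialisation short exact sequence; once this and Newton's non-vanishing (Corollary \ref{cor:non-vanishing}) are in place, the argument reduces to a short diagram chase plus the structure theorem for finitely generated modules over a DVR.
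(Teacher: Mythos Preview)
Your proposal is correct and follows essentially the same approach as the paper: both use the long exact sequence arising from $0 \to \sD_\Sigma \xrightarrow{x} \sD_\Sigma \to \sD_\lambda \to 0$, the vanishing of $\hc{d-1}(Y_1(\n),\sD_\lambda)^{\leq h}_f$ via non-criticality, the one-dimensionality \eqref{eqn:classical line}, and Corollary \ref{cor:non-vanishing} with Nakayama. The only organisational difference is that the paper splits the argument in two, first deriving the injection $M/xM \hookrightarrow \hc{d}(Y_1(\n),\sD_\lambda)^{\leq h}_f$ to get cyclicity, and then proving torsion-freeness separately in Lemma \ref{lem:torsion-free} (stated for general affinoids $\Omega$, but noting that for $\Omega=\Sigma$ only non-criticality is needed); you extract both from the same long exact sequence in one pass, which is slightly cleaner.
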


\begin{proof}
	Since $\Sigma$ is a curve smooth at $\lambda$, the maximal ideal $\m_\lambda \subset \cO(\Sigma)_\lambda$ is principal. Let $m$ be a generator. Then multiplication by $m$ induces a truncated long exact sequence
	\[
	0 \to \hc{d}(Y_1(\n),\sD_{\Sigma})^{\leq h}_{f} \otimes_{\cO(\Sigma)_\lambda}\cO(\Sigma)_\lambda/\m_\lambda \hookrightarrow \hc{d}(Y_1(\n),\sD_{\lambda})^{\leq h}_{f}.
	\]
	exactly as in \eqref{eqn:injection truncated}. The right-hand side is isomorphic to $\hc{d}(Y_1(\n),\sV_\lambda^\vee)_{f}^{\leq h}$, since $f$ is non-critical; but our assumptions on $f$ ensure that this is a one-dimensional $L$-vector space \eqref{eqn:classical line}. It follows that the left-hand side is either 0 or the map is an isomorphism. But if it were zero, Nakayama's lemma would imply that $\hc{d}(Y_1(\n), \sD_{\Sigma})^{\leq h}_{f} = 0$, which contradicts Corollary \ref{cor:non-vanishing}. So the map is an isomorphism and 
	\[
	\mathrm{dim}_L \big[\hc{d}(Y_1(\n),\sD_{\Sigma})^{\leq h}_{f} \otimes_{\cO(\Sigma)_\lambda}\cO(\Sigma)_\lambda/\m_\lambda\big] = 1.
	\]
	Nakayama's lemma then implies that $\hc{d}(Y_1(\n),\sD_\Sigma)^{\leq h}_{f}$ is cyclic over $\cO(\Sigma)_\lambda$, of the form $\cO(\Sigma)_\lambda/I$ for some ideal $I$ inside the maximal ideal $\m_\lambda$. 
	
	To conclude, it suffices to prove that this module is torsion-free, since then $I=0$. We do this in the next lemma.
\end{proof}

\begin{lemma}\label{lem:torsion-free}
	Let $\Omega$ be any affinoid containing $\lambda$. The space $\hc{d}(Y_1(\n),\sD_{\Omega})^{\leq h}_{f}$ is torsion-free.
\end{lemma}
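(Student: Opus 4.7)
The plan is to prove, by induction on $n = \dim \Omega$, the following slightly stronger statement: $\hc{i}(Y_1(\n),\sD_\Omega)^{\leq h}_f = 0$ for all $i < d$, and $\hc{d}(Y_1(\n),\sD_\Omega)^{\leq h}_f$ is torsion-free over $\cO(\Omega)_\lambda$. The engine is the flatness of $\sD_\Omega$ over $\cO(\Omega)$ (a standard feature of the construction, cf.\ \cite[\S3.2]{Urb11}, \cite[\S2]{Han17}): for any non-zero-divisor $a \in \cO(\Omega)_\lambda$, the sequence
\[
0 \longrightarrow \sD_\Omega \xrightarrow{\ \cdot\, a\ } \sD_\Omega \longrightarrow \sD_{V(a)} \longrightarrow 0
\]
is exact, where $V(a) \subset \Omega$ is the vanishing locus of $a$. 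Applying compactly supported cohomology, together with the exact operations of taking slope-$\leq h$ summands and localising at $\m_f$ (both commute with Hecke-equivariant maps), yields a long exact sequence
\[
\cdots \to \hc{i-1}(\sD_{V(a)})^{\leq h}_f \to \hc{i}(\sD_\Omega)^{\leq h}_f \xrightarrow{\cdot\, a} \hc{i}(\sD_\Omega)^{\leq h}_f \to \hc{i}(\sD_{V(a)})^{\leq h}_f \to \cdots.
\]

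The base case $n=0$ is $\Omega = \{\lambda\}$: by non-criticality of $f$ (Definition \ref{def:non-critical}) we have $\hc{i}(\sD_\lambda)^{\leq h}_f \cong \hc{i}(\sV_\lambda^\vee)^{\leq h}_f$, which vanishes for $i < d$ since cuspidal $f$ contributes only to degrees $[d,2d]$, and in degree $d$ is one-dimensional over $L$ by \eqref{eqn:classical line}, hence trivially torsion-free. For the inductive step, let $\dim \Omega = n \geq 1$ and assume the statement for all affinoids of smaller dimension containing $\lambda$. By prime avoidance (using that $\m_\lambda$ is not an associated prime of $\cO(\Omega)_\lambda$, as $\cO(\cW^{\mathrm{full}})_\lambda$ is regular), choose a non-unit non-zero-divisor $a \in \m_\lambda$. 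Then $\dim V(a) \leq n-1$, and the inductive hypothesis yields $\hc{j}(\sD_{V(a)})^{\leq h}_f = 0$ for $j < d$. The displayed LES then forces $a$ to act as an isomorphism on $\hc{i}(\sD_\Omega)^{\leq h}_f$ for $i < d$, and Nakayama's lemma (since $a \in \m_\lambda$) gives $\hc{i}(\sD_\Omega)^{\leq h}_f = 0$ for $i<d$. For $i=d$, the same LES shows $a$ acts injectively on $\hc{d}(\sD_\Omega)^{\leq h}_f$. Since every non-zero-divisor in $\cO(\Omega)_\lambda$ is either a unit (invertible action) or a non-unit to which the argument above applies, torsion-freeness over $\cO(\Omega)_\lambda$ follows.

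The main technical obstacle is the identification $\sD_\Omega / a\sD_\Omega \cong \sD_{V(a)}$ together with the verification that the cohomology of the quotient sheaf correctly computes the overconvergent cohomology of $V(a)$. This should be essentially formal from the construction of $\sD_\Omega$ as the continuous $\cO(\Omega)$-dual of $\cA_\Omega$, combined with the flatness mentioned above, but merits a careful cross-reference to the setup used in \cite{BW_CJM}. A secondary, more pedestrian point is the verification that the slope-$\leq h$ truncation is well-defined uniformly across the affinoids $V(a)$ appearing in the induction; since this only requires shrinking $\Omega$ finitely many times to accommodate the descent chain, it poses no obstacle in practice.
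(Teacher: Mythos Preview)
Your proof is correct and shares the same skeleton as the paper's: both use the short exact sequence $0 \to \sD_\Omega \xrightarrow{\ y\ } \sD_\Omega \to \sD_{\Omega_y} \to 0$ and its long exact sequence to reduce torsion-freeness in degree $d$ to the vanishing of $\hc{d-1}(Y_1(\n),\sD_{\Omega_y})^{\leq h}_f$. The difference lies in how that vanishing is obtained. The paper invokes it as a black box from \cite[Prop.~4.5.2]{Han17}, which gives vanishing of overconvergent cohomology localised at a cuspidal $\m_f$ in degrees below the cuspidal range, for \emph{arbitrary} weight affinoids. You instead prove it by induction on $\dim\Omega$, with base case $\Omega = \{\lambda\}$ handled by non-criticality of $f$. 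The paper's route is shorter; yours is more self-contained, and indeed the paper remarks immediately after its proof that in the one-dimensional case $\Omega = \Sigma$ actually needed for Proposition~\ref{prop:free rank one}, one can bypass the appeal to Hansen by exactly your base case. Your argument is the natural extension of that remark to all dimensions. One minor point of presentation: the prime-avoidance step is only needed to produce a single good $a$ for the vanishing of $\hc{i}$, $i<d$; for torsion-freeness itself the long exact sequence argument must then be rerun for \emph{each} non-unit non-zero-divisor, which is implicit in your last sentence but would read more cleanly if separated.
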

\begin{proof}
	The result is trivial if the module is 0, so assume it is non-zero; it then suffices to prove it is $\m_\lambda$-torsion-free. Let $y \in \m_\lambda$ be any element, and define $\Omega_y \defeq \Sp(\cO(\Omega)/(y))$. By \cite[Prop.~4.5.2]{Han17}, we know that for any $j < d$ we have
	\begin{equation}\label{eqn:zero below}
		\hc{j}(Y_1(\n),\sD_{\Omega_y})^{\leq h}_{f} = 0.
	\end{equation}

	Now consider the short exact sequence $0 \to \cD_{\Omega} \xrightarrow{y} \cD_\Omega \to \cD_{\Omega_y} \to 0$. The localised long exact sequence attached to this truncates to 
	\[
	\hc{d-1}(Y_1(\n),\sD_{\Omega_y})^{\leq h}_{f} \to \hc{d}(Y_1(\n),\sD_{\Omega})^{\leq h}_{f} \xrightarrow{y} \hc{d}(Y_1(\n),\sD_{\Omega})^{\leq h}_{f}.
	\]
	The first term is zero by \eqref{eqn:zero below}; so multiplication by $y$ is injective on the cohomology, which is therefore $y$-torsion-free. We conclude since $y$ was arbitrary.
\end{proof}

In the case required in the proof of Proposition \ref{prop:free rank one}, that is for $\Omega = \Sigma$, we need only check the torsion-free property for $y = m$, the generator of $\m_\lambda$ from above; and then $\Omega_y = \{\lambda\}$, and we know $\hc{j}(Y_1(\n),\sD_\lambda)_{f}^{\leq h} = 0$ simply by the non-criticality of $f$, without appealing to \cite{Han17}.

Drawing the results of the previous two sections together, we have shown:

\begin{corollary}\label{cor:etale}
	Assume Conjecture \ref{conj:dim}. Let $\Sigma \subset \cW$ be a curve smooth at $\lambda$. \begin{enumerate}[(1)]\setlength{\itemsep}{0pt}
		\item $\hc{d}(Y_1(\n),\sD_\Sigma)^{\leq h}_f$ is free of rank one over $(\bT_{\Sigma,h}^d)_f$.
		\item There is a point $x_{f}\in \cE_{\Sigma,h}^d$, at which the weight map $\cE_{\Sigma,h}^d \to \Sigma$ is \'etale. 
		\item There is a unique component $\cV$ of $\cE_{\Sigma,h}^d$ through $x_{f}$, and it has dimension one.
	\end{enumerate}
\end{corollary}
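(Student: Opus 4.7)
\textbf{Proof proposal for Corollary \ref{cor:etale}.}
The plan is to bootstrap all three parts from Proposition \ref{prop:free rank one}, which already identifies the localised overconvergent cohomology as a free module of rank one over $\cO(\Sigma)_\lambda$. Write $M \defeq \hc{d}(Y_1(\n),\sD_\Sigma)^{\leq h}_f$ for brevity.

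For part (1), I would observe that $\bT_{\Sigma,h}^d$ acts on $M$ by construction, and by the very definition of the Hecke algebra as the image of $\bH_{K}\otimes \cO(\Sigma)$ inside the endomorphism ring, the localisation $(\bT_{\Sigma,h}^d)_f$ acts faithfully on $M$. The weight map provides a structure morphism $\cO(\Sigma)_\lambda \to (\bT_{\Sigma,h}^d)_f$, and one obtains inclusions
\[
\cO(\Sigma)_\lambda \hookrightarrow (\bT_{\Sigma,h}^d)_f \hookrightarrow \mathrm{End}_{\cO(\Sigma)_\lambda}(M) \cong \cO(\Sigma)_\lambda,
\]
the last isomorphism because $M$ is free of rank one over $\cO(\Sigma)_\lambda$ by Proposition \ref{prop:free rank one}. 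The composite is the identity, so all arrows are isomorphisms. Consequently $M$ is free of rank one over $(\bT_{\Sigma,h}^d)_f$, which is (1).

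For part (2), the non-vanishing of $M$ provided by Corollary \ref{cor:non-vanishing} guarantees that $\m_f$ indeed defines a closed point $x_f \in \cE_{\Sigma,h}^d = \Sp(\bT_{\Sigma,h}^d)$. Since the completed local ring at $x_f$ is the completion of $(\bT_{\Sigma,h}^d)_f$, and the weight map $\cE_{\Sigma,h}^d \to \Sigma$ corresponds on local rings to the structure map $\cO(\Sigma)_\lambda \to (\bT_{\Sigma,h}^d)_f$ computed in part (1), this map is an isomorphism of local rings, hence the weight map is étale (in fact, an isomorphism on completed local rings) at $x_f$.

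For part (3), since $\Sigma$ is smooth at $\lambda$ of dimension one, the local ring $\cO(\Sigma)_\lambda$ is a discrete valuation ring, in particular a one-dimensional integral domain. Therefore $(\bT_{\Sigma,h}^d)_f$ is a one-dimensional integral domain, so it has a unique minimal prime. Minimal primes of $(\bT_{\Sigma,h}^d)_f$ correspond bijectively to irreducible components of $\cE_{\Sigma,h}^d$ passing through $x_f$, and their dimension equals $\mathrm{dim}(\bT_{\Sigma,h}^d)_f = 1$; this yields the unique component $\cV$ of dimension one. No step is particularly delicate—the conceptual work was entirely absorbed into Proposition \ref{prop:free rank one} and Corollary \ref{cor:non-vanishing}; the only point requiring a touch of care is verifying that the Hecke action is faithful after localisation, but this is automatic from the definition of $\bT_{\Sigma,h}^d$ as a subalgebra of an endomorphism ring.
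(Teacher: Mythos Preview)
Your argument is correct and follows the same route as the paper: deduce $(\bT_{\Sigma,h}^d)_f \cong \cO(\Sigma)_\lambda$ from Proposition~\ref{prop:free rank one} via the endomorphism ring, and then read off (1)--(3). The paper is slightly more explicit about the passage from algebraic to rigid \'etaleness, invoking \cite[\S7.3.2]{BGR} to identify the completions of the algebraic and rigid local rings, whereas you assert this fact directly; your handling of faithfulness after localisation (``automatic'') implicitly uses that the slope $\leq h$ cohomology is finitely generated, so that annihilators localise.
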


\begin{proof}
	Since $\hc{d}(Y_1(\n),\sD_\Sigma)^{\leq h}_{f}$ is free of rank one over $\cO(\Sigma)_\lambda$, its endomorphism ring is $\cO(\Sigma)_\lambda$, and we deduce that $(\bT_{\Sigma,h}^d)_{f} \cong \cO(\Sigma)_\lambda$. This gives (1) and the existence of the point $x_{f}$.
	
	We turn to \'etaleness of the weight map. Until now all of our localisations have been algebraic. To derive rigid geometric consequences we would like instead to study rigid localisations.  The results of \cite[\S7.3.2(3)]{BGR} say that if $X = \mathrm{Sp}(A)$ is a rigid space, then the rigid localisation of $A$ at a point $x$ is always a faithfully flat (Noetherian) extension of the algebraic localisation of $A$ at $\m_x$, and that these local rings have the same residue field and completions. 
	
	By (1), the map $\cO(\Sigma) \to \cO(\cE_{\Sigma,h}^d) = \bT_{\Sigma,h}^d$ is \'etale after algebraically localising at $\lambda$ and $x_f$ respectively. This map is of finite type by the slope condition. Since a finite type local homomorphism of Noetherian local rings is \'etale if and only if the induced map on completions is \'etale, and the rigid and algebraic local rings have the same completions, we thus obtain rigid \'etaleness from algebraic \'etaleness. Then (3) is a direct consequence of (2).
\end{proof}

\begin{remark}
	We remark that we are in a somewhat surreal situation in which having \emph{less} variation is important in controlling $p$-adic families.
	Knowledge of Conjecture \ref{conj:dim} also has very nice consequences in the study of a conjecture of Venkatesh on the freeness of the total cohomology over derived Hecke algebra: in \cite[Cor.\ 4.10]{HT17}, Hansen and Thorne prove a $p$-adic 
	realisation of his motivic conjecture under its assumption. In particular, they prove that the total cohomology at $\refpi$, with $p$-adic coefficients, is free of rank one over $\wedge^\bullet V_\pi$, where $V_\pi \defeq \ker(\cO(\cU)_\lambda \to \bT_{\cU,\refpi}) \otimes \cO(\cU)/\m_\lambda$; note that under the conjecture, $V_\pi$ has dimension $\ell$. Under some additional assumptions, they show that $V_\pi$ is a $p$-adic Selmer group.
\end{remark}

\begin{remark}\label{rem:2-dim}
	If $E$ is imaginary quadratic, then Conjecture \ref{conj:dim} is true, the families are all one-dimensional, and this is the whole story. If $E$ is larger, then we always have more variation, and this \'etaleness result can be lifted to at least two-dimensional neighbourhoods. Indeed, let $\Omega \supset \Sigma \supset \{\lambda\}$ inside $\cW$ be smooth at $\lambda$. By repeating the arguments above, we can show 
	\begin{equation}\label{eqn:injection higher}
		\hc{d}(Y_1(\n),\sD_\Omega)^{\leq h}_{f} \otimes_{\cO(\Omega)_\lambda}\cO(\Omega)_\lambda/(n) \hookrightarrow \hc{d}(Y_1(\n),\sD_\Sigma)^{\leq h}_{f},
	\end{equation}
	where $\m_\lambda = (m,n) \subset \cO(\Omega)_\lambda$. Moreover the left-hand side is non-zero by Corollary \ref{cor:non-vanishing}. Since the right-hand side is isomorphic to $\cO(\Sigma)_\lambda$ -- a principal ideal domain -- the left-hand side is cyclic, and Nakayama says that $\hc{d}(Y_1(\n),\sD_{\Omega})^{\leq h}_{f}$ is also cyclic over $\cO(\Omega)_\lambda$. But it is also torsion-free by Lemma \ref{lem:torsion-free}, hence free of rank one.
	
	Going beyond this appears to be difficult, however, since it is hard to control the cokernel of the injection \eqref{eqn:injection truncated} in general (See also Remark \ref{rem:higher dims}). For the $p$-adic Artin formalism of \S\ref{sec:artin formalism}, $1$-dimensional variation is enough.
\end{remark}

\subsection{Families of $p$-adic $L$-functions}\label{sec:CM families p-adic}
Let $\Sigma$ be a smooth curve through $\lambda$ as above, let $\cV$ be the unique component through $x_{f}$ in $\cE_{\Sigma,h}^d$. We know from Corollary \ref{cor:etale} that 
\begin{equation}\label{eq:free rank one}
	\hc{d}(Y_1(\n),\sD_\Sigma)_f^{\leq h}\defeq \hc{d}(Y_1(\n),\sD_\Sigma)^{\leq h}\otimes_{\bT_{\Sigma,h}^d} (\bT_{\Sigma,h}^d)_f \text{ is free of rank one over }(\bT_{\Sigma,h}^d)_f,
\end{equation}
where both sides are algebraic localisations. We would like to lift this to an affinoid neighbourhood. Thus consider the rigid analytic localisation
\[
\bT_{x_f} \defeq \varinjlim\limits_{x_f \in \Sp(T) \subset \cV} T,
\]
which by \cite[\S7.3.2]{BGR} is faithfully flat over $(\bT_{\Sigma,h}^d)_f$. Overconvergent cohomology defines a rigid coherent sheaf $\cF$ on $\cE_{\Sigma,h}^d$ \cite[Thm.\ 4.2.2]{Han17}; tensoring \eqref{eq:free rank one} over $(\bT_{\Sigma,h}^d)_f$ with $\bT_{x_f}$, the rigid stalk
\[
\cF_{x_f} = \hc{d}(Y_1(\n),\sD_{\Sigma})^{\leq h} \otimes_{\bT_{\Sigma,h}^d} \bT_{x_f}\text{ is free of rank one over }\bT_{x_f}.
\]
Then possibly after shrinking $\cU$ and $\cV$, we can find a direct summand $T$ of $\bT_{\Sigma,h}^{d}$ such that:
\begin{itemize}\setlength{\itemsep}{0pt}
	\item $\hc{d}(Y_1(\n),\sD_{\Sigma})^{\leq h}\otimes_{\bT_{\Sigma,h}^{d}} T$ is free of rank one over $T$ (since
	by \cite[Lem.\ 2.10]{BDJ17}, an isomorphism on rigid stalks lifts to an isomorphism on neighbourhoods),
	\item $\cV = \Sp(T)$ is affinoid, and \'etale over $\Sigma$ (since \'etaleness is an open condition).
\end{itemize}
Indeed, $T$ is always a quotient of $\bT_{\Sigma,h}^d$, and \'etaleness ensures it is a summand. It follows that 
\[
\hc{d}(Y_1(\n),\sD_{\Sigma})^{\leq h}\otimes_{\bT_{\Sigma,h}^{d}} T \subset \hc{d}(Y_K,\sD_{\Sigma})^{\leq h}
\]
is also a summand.	

Let now $\Phi_{\cV}$ be a generator of this module. Note that after possibly further shrinking $\cV$, we may assume that every classical $y \in \cV$ is non-critical and $\mathrm{dim}_L \hc{d}(Y_1(\n),\sV_{\lambda_y}^\vee)_{f_y} = 1,$ where $y$ corresponds to a refinement $f_y$ of an automorphic representation of weight $\lambda_y$.

\begin{proposition}
	For any classical $y \in \cV(L)$, there exists a $p$-adic period $c_y \in L$ such that 
	\[
	\mathrm{sp}_{\lambda_y}(\Phi_{\cV}) = c_y\cdot \Phi_{f_y}.
	\]
\end{proposition}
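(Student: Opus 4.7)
My plan is to deduce this from the rank-one freeness established in Corollary \ref{cor:etale}, the control theorem, and our one-dimensionality hypothesis \eqref{eqn:classical line} at the classical point $y$. The specialization map
\[
\mathrm{sp}_{\lambda_y}: \hc{d}(Y_1(\n),\sD_\Sigma)^{\leq h} \longrightarrow \hc{d}(Y_1(\n),\sD_{\lambda_y})^{\leq h}
\]
is induced by $\cO(\Sigma) \twoheadrightarrow \cO(\Sigma)/\m_{\lambda_y} = L$ on the coefficient sheaf, and is $\bT_{\Sigma,h}^d$-equivariant.

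First I would track where $\mathrm{sp}_{\lambda_y}(\Phi_\cV)$ lands. By construction, $\Phi_\cV$ generates the direct summand $\hc{d}(Y_1(\n),\sD_\Sigma)^{\leq h}\otimes_{\bT_{\Sigma,h}^d} T$ as a $T$-module, so its specialization generates the corresponding summand of the classical-weight cohomology as a module over $T/\m_{\lambda_y}T$. By étaleness of $\cV \to \Sigma$ at $y$ (Corollary \ref{cor:etale}), $T/\m_{\lambda_y}T$ is finite étale over $L$, hence a product of fields indexed by the points of $\cV$ above $\lambda_y$. Shrinking $\cV$ to a neighbourhood of $y$ in which $y$ is the unique point over $\lambda_y$ (equivalently, localizing at $\m_{f_y}$, since this maximal ideal singles out the Hecke eigensystem of $f_y$), we obtain $T/\m_{\lambda_y}T = L$, so that $\mathrm{sp}_{\lambda_y}(\Phi_\cV)$ generates a free $L$-module of rank one inside $\hc{d}(Y_1(\n),\sD_{\lambda_y})^{\leq h}$.

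Next I would apply the control theorem. Since, by the paragraph preceding the proposition, $f_y$ is non-critical, the map
\[
\rho : \hc{d}(Y_1(\n),\sD_{\lambda_y})^{\leq h}_{f_y} \isorightarrow \hc{d}(Y_1(\n),\sV_{\lambda_y}^\vee)_{f_y}
\]
is an isomorphism, and by the classicality hypothesis the target is a single copy of $L$ spanned by $\phi_{f_y}$. Thus the source is one-dimensional, spanned by the canonical lift $\Phi_{f_y}$. Combining with the previous paragraph, $\mathrm{sp}_{\lambda_y}(\Phi_\cV)$ must equal $c_y \cdot \Phi_{f_y}$ for a unique scalar $c_y \in L$.

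The real work is not this final identification but the structural inputs: most notably the rank-one freeness in Corollary \ref{cor:etale}, which rests on the non-abelian Leopoldt conjecture via Corollary \ref{cor:non-vanishing}, together with Lemma \ref{lem:torsion-free} and Nakayama. The only bookkeeping concern here is isolating $y$ in the fibre of $\cV \to \Sigma$ over $\lambda_y$, so that the $T$-eigensystem seen in the specialization is precisely that of $f_y$; this is handled either by further local shrinking of $\cV$ or by passing to the $\m_{f_y}$-localization, and in either case follows directly from the étaleness already proved.
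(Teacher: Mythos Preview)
Your proposal is correct and follows essentially the same route as the paper. Both arguments reduce to the observation that, by non-criticality of $f_y$ and the one-dimensionality hypothesis \eqref{eqn:classical line}, the localisation $\hc{d}(Y_1(\n),\sD_{\lambda_y})^{\leq h}_{f_y}$ is an $L$-line spanned by $\Phi_{f_y}$, so the Hecke-equivariant image of $\Phi_{\cV}$ under $\mathrm{sp}_{\lambda_y}$ must be a scalar multiple of it. The paper phrases the first step as an explicit chain of isomorphisms (reapplying the proof of Proposition~\ref{prop:free rank one} at $y$), whereas you phrase it via the \'etale fibre structure of $T/\m_{\lambda_y}T$; these are the same content. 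One small remark: your claim that $\mathrm{sp}_{\lambda_y}(\Phi_{\cV})$ \emph{generates} a rank-one summand asserts $c_y \neq 0$, which neither you nor the paper actually proves at this stage (the paper addresses it separately in the remark following the proposition); but this is harmless, since the proposition as stated allows $c_y = 0$.
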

\begin{proof}
	Let $y$ be such a point, corresponding to a maximal ideal $\m_y$. Then $\hc{d}(Y_1(\n),\sD_{\lambda_y}(L))^{\leq h}_{f_y}$ is a one-dimensional $L$-vector space, as the classical cohomology is a line and $f_y$ is non-critical. It is spanned by $\Phi_{f_y}$, a lift of a classical generator. The methods above show that
	\begin{align*}
		\hc{d}(Y_1(\n),\sD_{\Sigma})^{\leq h}\otimes_{\bT_{\Sigma,h}^{d}} T/\m_y &\cong \hc{d}(Y_1(\n),\sD_{\Sigma})_{f_y}^{\leq h}\otimes_{\cO(\Sigma)_\lambda}\cO(\Sigma)_\lambda/\m_{\lambda_y}\\
		&\cong \hc{d}(Y_1(\n),\sD_\lambda)^{\leq h}_{f_y},
	\end{align*}
	where the last isomorphism was proved in Proposition \ref{prop:free rank one}. Thus the image of $\Phi_{\cV}$ under $\mathrm{sp}_{\lambda_y}$ also lands in this $L$-line. But then the result follows.
\end{proof}

\begin{remark}
	Note we can always shrink yet further so that $c_y \neq 0$ for all $y$. The class $\Phi_{f_y}$ itself is only well-defined up to scaling by an element of a number field $\Q(f_y)$ over which the Hecke eigenvalues are defined, the indeterminacy corresponding to scaling the periods. It is natural to ask if the $c_y$ themselves can be $p$-adically interpolated. An approach to this might be to consider overconvergent Eichler--Shimura isomorphisms (as in \cite{AIS15}).
\end{remark}

In \S\ref{sec:top row II}, we described evaluation maps $\mathrm{Ev}^{\fc}_{\lambda}$ and 
\begin{align*}
	\mathrm{Ev}^{\fc}_{\cV} : \hc{d}(Y_1(\n),\sD_{\Sigma})^{\leq h} \otimes_{\cO(\Sigma)}\cO(\cV) &\to \cD(\Gal_p,\cO(\Sigma))\otimes_{\cO(\Sigma)} \cO(\cV)\\
	&\cong \cD(\Gal_p,\cO(\cV)),
\end{align*}
related by
\[
\mathrm{sp}_y \circ \mathrm{Ev}^{\fc}_{\cV} = \mathrm{Ev}^{\fc}_{\lambda_y} \circ \mathrm{sp}_{\lambda_y}.
\]
(Here, since we have $\cV \to \Sigma$ \'etale, we do not need to be careful about whether we take $\mathrm{sp}_\lambda$ or $\mathrm{sp}_y$, but in non-\'etale situations more care is needed). 

\begin{definition}
	Define the $p$-adic $L$-function over $\cV$ to be $L_p(\cV) \defeq \alpha_p^{-1}\mathrm{Ev}^{(p)}_{\cV}(\Phi_{\cV})$ where $\alpha_{p}$ is the $U_p$ eigenvalue of $\Phi_{\cV}$.
\end{definition}
Then 
\begin{align*}
	\mathrm{sp}_y(L_p(\cV)) &= \alpha_p^{-1}(y)\mathrm{sp}_y\circ\mathrm{Ev}^{(p)}_{\cV}(\Phi_{\cV})\\
	&= \alpha_p^{-1}(y)\mathrm{Ev}^{(p)}_{\lambda_y} \circ \mathrm{sp}_{\lambda_y}(\Phi_{\cV}) = \alpha_p^{-1}(y)c_y\cdot \mathrm{Ev}^{(p)}_{\lambda_y}(\Phi_{f}) = c_y\cdot L_p(f),
\end{align*}
so that $L_p(\cV)$ interpolates the $p$-adic $L$-functions of classical points in $\cV$, as required. To recover the formulation of Theorem \ref{thm:intro} in the introduction, we identify $\cO(\cV^+)$ and $\cO(\cV)$ under the base-change map, and use the Amice transform to identify $\cD(\Gal_p,\cO(\cV^+))$ with $\cO(\cV^+\times \sX(\Gal_p))$.

This $p$-adic $L$-function has $d+2+\delta_E$ variables, where $\delta_E$ is the Leopoldt defect, which Leopoldt's conjecture predicts is zero. These comprise one weight variable and $d+\delta_E+1$ variables over the Galois group $\mathrm{Gal}_p$.

\begin{remark}\label{rem:higher dims}
	We comment on variation in higher-dimensional families. In Remark \ref{rem:2-dim}, we described an \'etaleness result for smooth 2-dimensional affinoids $\Omega$ in weight space, which gives a canonical class $\Phi_{\Omega}$ in $\hc{d}(Y_1(\n),\sD_\Omega)_f^{\leq h}$. However, this does not immediately translate into variation of $p$-adic $L$-functions over such $\Omega$. One needs to prove that the natural map
	\begin{equation}\label{eq:2-dim spec}
		\hc{d}(Y_1(\n),\sD_{\Omega})_f^{\leq h}  \longrightarrow \hc{d}(Y_1(\n),\sD_\lambda)_f^{\leq h}
	\end{equation}
	is surjective, so that $\Phi_{\Omega}$ does interpolate the classes $\Phi_{f_y}$ over $\Omega$. For this one needs to rule out the possibility that 
	\[
	\hc{d}(Y_1(\n),\sD_\Omega)^{\leq h}_f\otimes_{\cO(\Omega)_\lambda} \cO(\Sigma)_\lambda \hookrightarrow \hc{d}(Y_1(\n),\sD_\Sigma)^{\leq h}_f \cong \cO(\Sigma)_\lambda
	\]
	lands in a proper submodule of $\cO(\Sigma)_\lambda$, which would then be inside the maximal ideal $\m_\lambda$ and be mapped to 0 under further specialisation. In this case, the image of \eqref{eq:2-dim spec} would be 0.
	
	More generally, for an arbitrary affinoid $\cU$, if we could prove
	\[
	\hc{d}(Y_1(\n),\sD_{\cU})^{\leq h}_f \otimes_{\cO(\cU)_\lambda}\cO(\cU)_\lambda/\m_\lambda \cong \hc{d}(Y_1(\n),\sD_\lambda)^{\leq h}_f,
	\]
	then the methods of this section would be enough to show both \'etaleness over $\cU$, giving a class $\Phi_{\cU,f}$, \emph{and} specialisation of $\Phi_{\cU,f}$ at classical points, giving a $p$-adic $L$-function varying over all of $\cU$. When $f$ appears in only one degree of cohomology, the Tor spectral sequence $\mathrm{Tor}_{-i}^{\cO(\cU)}(\hc{j}(Y_1(\n),\sD_{\cU})_f^{\leq h}, \cO(\cU)_\lambda/\m_\lambda) \implies \hc{i+j}(Y_1(\n),\sD_\lambda)_f^{\leq h}$ degenerates at the $E_2$ page and gives the required isomorphism when combined with Corollary \ref{cor:non-vanishing}. This is not true for the $E_2^{0,d}$ term in general, due to the non-vanishing of higher cohomology and Tor groups.
\end{remark}

\subsection{The non-base-change case}
We remark briefly on the more general case where $f$ is non-critical but not necessarily base-change. In this case, much of the above goes through if we work with $\cW^{\mathrm{full}}$ instead of the base-change (conjugate-invariant) weights $\cW$.
The general results of \S\ref{sec:local geometry} still yield a point $x_f \in \cE_{\cU,h}^\bullet$; assuming non-abelian Leopoldt we may still exhibit a family through $x_f$ lying over a $d$-dimensional subspace $\Omega \subset \cW^{\mathrm{full}}$; and the results of \S\ref{sec:descent} yield non-vanishing of $\hc{d}(Y_K,\sD_{\Omega})_{f}$. If $\Sigma \subset \Omega$ is a curve containing (and smooth at) $\lambda$, then the results of \S\ref{sec:etale} then show that there is a unique component $\cV$ of $\cE_{\Sigma,h}^d$ through $x_f$, which moreover is \'etale at $x_f$. Then \S\ref{sec:CM families p-adic} provides the construction of a $p$-adic $L$-function $L_p(\cV)$ over $\cV$, which specialises to $c_y \cdot L_p(f_y)$ at any classical point $y \in \cV$ (and in particular at $x_f$).

The base-change hypothesis above is used \emph{only} to control $\Omega$. In this case the $p$-adic base-change transfer of \S\ref{sec:base change transer} shows that $\Omega$ can be taken to be an affinoid in $\cW$, which is smooth at $\lambda$, and allows us to parametrise explicit smooth curves $\Sigma$ through $x_f$. Moreover we may always take $\Sigma$ to contain a Zariski-dense set of classical points. If we drop the base-change hypothesis, then it seems very difficult to prove any smoothness result, and further we cannot guarantee that $\Omega$ contains \emph{any} pure weight except for $\lambda$. In particular (even assuming smoothness) the values of $L_p(\cV)$ will not in general compute classical $L$-values. Such non-classical families -- and their $p$-adic $L$-functions -- remain very mysterious, and it would be particularly interesting to have a better understanding of their arithmetic (and what they can tell us about $f$).

\section{$p$-adic Artin formalism}\label{sec:artin formalism}
\subsection{Statement}
Let $f^+$ be a Hilbert modular form over $E^+$, and $f$ its base-change to $E$. We maintain the assumptions as in \S\ref{sec:preliminaries}, and assume that $f^+, f^+\otimes\chi_{E/E^+}$ and $f$ are non-critical, where $\chi_{E/E^+}$ is the quadratic Hecke character of $E^+$ attached to $E/E^+$. By Artin formalism we have
\[
L(f,\psi,s) = L(f^+,\psi,s)L(f^+\otimes \chi_{E/E^+},\psi,s).
\]
In Tate's formulation, this is $L(f,\varphi\circ N_{E/E^+}) = L(f^+\otimes \chi_{E/E^+},\varphi)L(f^+,\varphi)$, where $\varphi$ is a Hecke character of $E^+$. In this section, we prove a $p$-adic analogue. 

The overconvergent cohomology construction of $p$-adic $L$-functions of Hilbert forms was given in \cite{Bar15}, and varied in families in \cite{BDJ17,BH17}. As $\chi_{E/E^+}$ is quadratic, the slopes at $p$ of $f^+$ and $f^+\otimes\chi_{E/E^+}$  are equal, so $f^+\otimes \chi_{E/E^+}$ is non-critical slope if and only if $f^+$ is.

Let $E^{+,\cyc} = E^+(\mu_{p^\infty})$; then there is a natural projection $\Gal_p^+ \rightarrow\Gal_p^{+,\cyc} \defeq \Gal(E^{+,\cyc}/E^+)$. For the rest of the paper, we abuse notation and write $L_p(f^+)$ (resp.\ $L_p(f^+\otimes\chi_{E/E^+})$) for the distribution over $\Gal_p^{+,\cyc}$ obtained from the $p$-adic $L$-function of $f^+$ (resp.\ $f\otimes\chi_{E/E^+}$). 
\begin{definition}
	Let $\mathrm{pr}_{E/E^+} : \Gal_p \to \Gal_p^{+,\cyc}$ be the natural projection. Define $L_p^{\cyc}(f) \in \cD(\Gal_p^{+,\cyc}, L)$, the \emph{restriction to the cyclotomic line}, by
	\[
	\int_{\Gal_p^{+,\cyc}} \varphi  \cdot dL_p^\cyc(f) \defeq \int_{\Gal_p} (\varphi \circ \mathrm{pr}_{E/E^+}) \cdot dL_p(f),
	\]
	for $\varphi \in \cA(\Gal_p^{+, \cyc}, L)$.
\end{definition}

\begin{theorem}\label{thm:artin formalism}
	Suppose the non-abelian Leopoldt conjecture holds for $f$. Then
	\[
	L_p^{\cyc}(f) = L_p(f^+)\cdot L_p(f^+\otimes\chi_{E/E^+})
	\]
	as distributions on $\Gal_p^{+,\cyc}$.
\end{theorem}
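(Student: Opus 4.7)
\emph{Strategy.} The plan is to prove the equality by testing both sides on a Zariski-dense set of characters. Both sides are admissible distributions of the same order on $\Gal_p^{+,\cyc}$: the slopes of $L_p(f^+)$ and $L_p(f^+ \otimes \chi_{E/E^+})$ coincide since $\chi_{E/E^+}$ is quadratic and unramified at $p$ (as $\n^+$ is coprime to $\mathfrak{d}$), and the slopes of $f$ at primes above $p$ are compatible under base-change. It therefore suffices to check equality on the Hecke characters $\varphi$ of $E^+$ of $p$-power conductor for which $\varphi$ and $\varphi \chi_{E/E^+}$ are critical for $f^+$ and $f^+ \otimes \chi_{E/E^+}$ respectively and, equivalently, $\varphi \circ N_{E/E^+}$ is critical for $f$; these form a Zariski-dense subset of $\sX(\Gal_p^{+,\cyc})$.

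\emph{Interpolation and classical Artin formalism.} For such a $\varphi$, class field theory identifies $\mathrm{pr}_{E/E^+}$ with the norm $N_{E/E^+}$, whence
\[
\int_{\Gal_p^{+,\cyc}} \varphi \cdot dL_p^{\cyc}(f) = L_p\bigl(f,(\varphi \circ N_{E/E^+})_{(p)}\bigr).
\]
Theorem \ref{p:admissible} expresses this as an explicit factor times $\Lambda(f,\varphi \circ N_{E/E^+})/\Omega_f$. Similarly, the right-hand side evaluated at $\varphi$ becomes a product of the Hilbert interpolation formulas of \cite{Bar15} for $f^+$ and $f^+ \otimes \chi_{E/E^+}$, yielding an explicit factor times $\Lambda(f^+,\varphi)\Lambda(f^+ \otimes \chi_{E/E^+},\varphi)/\bigl(\Omega_{f^+}\Omega_{f^+\otimes\chi_{E/E^+}}\bigr)$. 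Classical Artin formalism gives
\[
\Lambda(f,\varphi \circ N_{E/E^+}) = \Lambda(f^+,\varphi)\Lambda(f^+ \otimes \chi_{E/E^+},\varphi),
\]
reducing the problem to matching the auxiliary (non-$L$-value) factors and reconciling the periods.

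\emph{Matching auxiliary factors.} The interpolation factors decompose locally at each prime $\mathfrak{q}\mid p$ of $E^+$, and the matching proceeds by a case-by-case analysis according to the splitting behaviour of $\mathfrak{q}$ in $E/E^+$. In each case the $U_\pri$-eigenvalues of $f$ at primes $\pri \mid \mathfrak{q}$ of $E$ are determined, via the explicit base-change identity, from the $U_\mathfrak{q}$-eigenvalues of $f^+$ and $f^+ \otimes \chi_{E/E^+}$, so the local $U$-denominators and the unramified Euler factors $Z_\pri$ on the left reassemble into the product of their counterparts on the right. The global constants are reconciled by the standard factorisations $\tau(\varphi \circ N_{E/E^+}) = \tau(\varphi)\tau(\varphi \chi_{E/E^+})$ (up to an explicit power of the relative discriminant) and $|D_E| = |N_{E^+/\Q}(\mathfrak{d})|\cdot|D_{E^+}|^{2}$.

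\emph{The main obstacle: periods.} The hardest step is to control the ratio $\Omega_f/(\Omega_{f^+}\Omega_{f^+\otimes\chi_{E/E^+}})$, which is intrinsically defined only up to an algebraic unit. The cleanest way I know to handle this is to promote the identity into a family: by Theorem \ref{thm:intro}, together with the analogous families of Hilbert $p$-adic $L$-functions \cite{BDJ17,BH17}, both sides extend to rigid analytic functions on $\cV^+ \times \sX(\Gal_p^{+,\cyc})$, for $\cV^+$ a one-dimensional family through $x_{f^+}$. Their ratio is then a single rigid analytic function in the weight variable, which at each classical $y \in \cV^+$ equals an explicit ratio of $p$-adic periods. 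Normalising the periods consistently at a single reference classical point in $\cV^+$ (which can always be arranged since the periods are units) forces this ratio to be identically $1$, giving the required identity for $f = f_\lambda$.
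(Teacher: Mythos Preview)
Your overall architecture is close to the paper's, but there is a genuine gap in the admissibility step. You assert that since both sides are admissible of the same order and agree at a Zariski-dense set of critical characters, they must be equal. This inference is only valid when the common growth order is strictly below $\min_\sigma(k_\sigma+1)$; an admissible distribution of higher order is \emph{not} determined by its values on the critical characters. Here the order is $2h_p^\circ$ (the slope of $f$ is twice that of $f^+$, and the product of two order-$h_p^\circ$ distributions has order $2h_p^\circ$), and for a general non-critical $f^+$ one may well have $2h_p^\circ \geq \min_\sigma(k_\sigma+1)$. The paper isolates this as the \emph{extremely small slope} condition $h_p^\circ < \tfrac{1}{2}\min_\sigma(k_\sigma+1)$; only under this hypothesis does your Zariski-density-in-characters argument go through.

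This also undercuts your families paragraph. You claim the ratio of the two-variable $L$-functions over $\cV^+ \times \sX(\Gal_p^{+,\cyc})$ is a rigid function of the weight variable alone, because at each classical $y$ it ``equals an explicit ratio of $p$-adic periods''. But that presupposes the single-weight equality you are trying to prove; all you actually know at a general classical $y$ is agreement on critical characters, which does not force the specialised distributions to be proportional. The paper's use of families is different: one varies the \emph{weight}, observes that a Zariski-dense set of classical $y \in \cV^+$ satisfies the extremely small slope bound (since the slope is constant while $\min_\sigma(k_\sigma+1)$ grows), proves the identity outright at those $y$ by the admissibility argument, and only then interpolates the resulting weight-by-weight scalars $C_y$ to an element of $\cO(\Sigma^+)^\times$. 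Specialising at $x_{f^+}$ then gives the theorem. Your period discussion is essentially the right endgame, but it must be fed by the extremely-small-slope input, not by the unjustified ratio claim.
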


Here the non-abelian Leopoldt conjecture is required for the existence of the base-change family of $p$-adic $L$-functions above.

\begin{remark}
	We restrict to $\Gal_p^{+,\cyc}$ to ensure it is a one-dimensional $p$-adic group, so that distributions on it can be uniquely determined by admissibility and interpolation properties \cite{AV75}. Note that Leopoldt's conjecture would imply that $\mathrm{Gal}_p^+$ is, in any case, only a finite extension of $\mathrm{Gal}_p^{+,\cyc}$, and hence is itself one-dimensional. In particular, given the (usual) Leopoldt conjecture, one could replace every instance of $\Gal_p^{+,\cyc}$ in this section with $\Gal_p^+$.
\end{remark}

\subsection{The theorem for extremely small slope}
The following is a more-or-less automatic, but key, first step in proving Theorem \ref{thm:artin formalism}.
\begin{proposition}\label{prop:equal interp}
	The distributions $L_p^\cyc(f)$ and $L_p(f^+)L_p(f^+\otimes\chi_{E/E^+})$ agree at any critical Hecke character.
\end{proposition}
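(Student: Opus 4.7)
The strategy is to evaluate both distributions at an arbitrary critical Hecke character $\varphi$ of $E^+$ (viewed as a character of $\Gal_p^{+,\cyc}$) and verify the resulting equality of numbers by reducing to classical Artin formalism. Write $\chi \defeq \chi_{E/E^+}$ and let $\psi \defeq \varphi \circ N_{E/E^+}$ be the corresponding Hecke character of $E$. Its conductor and infinity type are determined by those of $\varphi$, and one checks that criticality of $\varphi$ for both $f^+$ and $f^+ \otimes \chi$ forces criticality of $\psi$ for $f$; by definition of $L_p^\cyc(f)$, its value at $\varphi$ is exactly $L_p(f, \psi)$.

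First I would apply Theorem \ref{p:admissible} to write $L_p(f,\psi)$ as an explicit scaling factor $(*)_{f,\psi}$ times $\Lambda(f,\psi)$, and simultaneously apply the Hilbert interpolation formula of \cite{Bar15} to express $L_p(f^+)(\varphi) \cdot L_p(f^+ \otimes \chi)(\varphi)$ as a product of explicit factors $(*)_{f^+,\varphi}\cdot(*)_{f^+\otimes\chi,\varphi}$ times $\Lambda(f^+,\varphi)\cdot \Lambda(f^+\otimes\chi,\varphi)$. The classical identity $\Lambda(f,\psi) = \Lambda(f^+,\varphi)\cdot\Lambda(f^+\otimes\chi,\varphi)$, with matching archimedean $\Gamma$-factors, reduces the proposition to checking that the interpolation factors themselves satisfy
\[
(*)_{f,\psi} \;=\; (*)_{f^+,\varphi}\cdot (*)_{f^+\otimes\chi,\varphi}.
\]

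The main obstacle is this identity of scaling factors, which decomposes into several sub-identities. The Gauss sum contribution factors as $\tau(\psi) = \tau(\varphi)\cdot\tau(\varphi\chi)$ (up to a controlled power of the conductor), and the period ratio $\Omega_f = \Omega_{f^+}\cdot\Omega_{f^+\otimes\chi}$ can be arranged by an appropriate choice of periods compatible with the decomposition of the cohomology of $f$ under base-change. The most delicate step is matching the local factors at primes above $p$: for each $\pri^+\mid p$ in $E^+$, one must show that the product of $\pri$-contributions $\alpha_\pri$, $\psi(\pri)$ and $Z_\pri$ on the left, running over $\pri\mid \pri^+$ in $E$, equals the product of the two corresponding $\pri^+$-contributions for $f^+$ and $f^+\otimes\chi$ on the right. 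This is a case-by-case check depending on whether $\pri^+$ is split, inert, or ramified in $E/E^+$, using standard base-change relations between Hecke eigenvalues (e.g.\ $\alpha_{\pri^+}^2 = \alpha_\pri$ when $\pri^+$ is inert with $\pri = \pri^+\cO_E$) together with the values $\chi(\pri^+)\in\{\pm 1,0\}$ and the factorisation of Euler polynomials $X^2 - a_{\pri^+}(f^+)X + \cdots$ at $\pri^+$ into corresponding polynomials over $E$. Assembling these matchings yields the claimed equality at every critical $\varphi$.
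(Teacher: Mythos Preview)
Your proposal is correct and follows essentially the same route as the paper: reduce to classical Artin formalism for the completed $L$-values, arrange compatible complex periods, and then check the explicit interpolation factors match term-by-term. The only difference in emphasis is that the paper singles out the Gauss-sum identity $\tau(\psi)=\tau(\varphi)\tau(\varphi\chi)$ as the most delicate step, identifying it as a characteristic-zero Hasse--Davenport relation (citing \cite[\S6, Cor.~1]{Mar72} and the detailed Bianchi computation in \cite[\S7]{BW18}), whereas you flag the local-at-$p$ Euler factor matching as the hardest part; both checks are needed and your outline covers them.
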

\begin{proof}
	By classical Artin formalism, the complex $L$-values agree. One can choose the complex periods in a compatible way. Thus one needs to check that the $p$-adic interpolation factors of the two sides agree, which is an exercise in book-keeping. The most difficult step is the compatibility of Gauss sums, which is a characteristic $0$ version of the classical Hasse--Davenport identity; see \cite[\S6, Cor.\ 1]{Mar72}. In the Bianchi case, this is explained in detail in \cite[\S7]{BW18}.
\end{proof}

Define a renormalised $U_p$-operator $U_p^\circ \defeq p^{-v}U_p$ to ensure its integrality, as in the proof of \cite[Prop.~9.4]{BW_CJM}. Let $h_p^\circ$ be the slope of the $U_p^\circ$-eigenvalue of $f^+$, which is constant in families. We say $f^+$ has \emph{extremely small slope} if $h_p^\circ < \tfrac{1}{2}\mathrm{min}(k_\sigma + 1)$.

\begin{proposition}\label{prop:extremely}
	If $f^+$ has extremely small slope, then Theorem \ref{thm:artin formalism} holds for $f^+,f$.
\end{proposition}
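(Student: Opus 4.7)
The plan is to combine the interpolation identity of Proposition \ref{prop:equal interp} with an Amice--V\'elu-style uniqueness argument on the one-dimensional $p$-adic group $\Gal_p^{+,\cyc}$. Since $\Gal_p^{+,\cyc}$ has dimension one, \cite{AV75} guarantees that an admissible distribution on it is uniquely determined by its values at locally algebraic characters whose infinity types range over an integer interval of length strictly greater than the order of admissibility; the extremely small slope hypothesis is precisely what makes the critical range large enough to apply this.

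First I would verify that both $L_p^\cyc(f)$ and the product $L_p(f^+)\cdot L_p(f^+\otimes\chi_{E/E^+})$ are admissible distributions of order $2 h_p^\circ$. The right-hand side is a product of two admissible distributions of order $h_p^\circ$ on $\Gal_p^{+,\cyc}$ (from the Hilbert constructions of \cite{Bar15,BDJ17,BH17}), hence itself admissible of order $2 h_p^\circ$. For the left-hand side, the growth bound in Theorem \ref{p:admissible} shows that $L_p(f)\in\cD(\Gal_p,L)$ is admissible with total cyclotomic order $\sum_{\pri\mid p} h_\pri^\circ$, where $h_\pri^\circ$ is the slope of the normalised operator $U_\pri^\circ$. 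Tracking the compatibility of $U_p$-eigenvalues under base-change (through splitting, inertia and ramification at primes above $p$ in $E/E^+$, together with the $p^{-v}$ normalisation) shows $\sum_{\pri\mid p} h_\pri^\circ = 2 h_p^\circ$, and pullback along $\mathrm{pr}_{E/E^+}$ preserves this order for the cyclotomic restriction.

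Next I would invoke Proposition \ref{prop:equal interp}: on the cyclotomic quotient, the critical Hecke characters correspond to parallel infinity types $j\in\{0,1,\dots,\min_\sigma k_\sigma\}$ twisted by finite-order characters, furnishing $\min_\sigma(k_\sigma)+1 = \min_\sigma(k_\sigma+1)$ interpolation values. The extremely small slope hypothesis $h_p^\circ < \tfrac{1}{2}\min_\sigma(k_\sigma+1)$ is equivalent to $2h_p^\circ < \min_\sigma(k_\sigma+1)$, so the size of the critical interpolation range strictly exceeds the order of admissibility on either side. The Amice--V\'elu theorem then forces the equality $L_p^\cyc(f) = L_p(f^+)\cdot L_p(f^+\otimes\chi_{E/E^+})$ in $\cD(\Gal_p^{+,\cyc},L)$.

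The main technical obstacle will be the precise bookkeeping of admissibility orders under the cyclotomic restriction, in particular verifying that the total cyclotomic order of $L_p(f)$ really equals $2h_p^\circ$ uniformly in the decomposition behaviour of primes above $p$ in $E/E^+$, and that the product distribution on the right genuinely achieves (rather than merely bounds) the order $2h_p^\circ$. Once this bookkeeping is in place the uniqueness argument is essentially formal, and the extension to arbitrary small slope in subsequent results (if desired) can be obtained via a deformation in $p$-adic families using the machinery of \S\ref{sec:CM families p-adic}.
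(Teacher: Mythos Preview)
Your proposal is correct and follows essentially the same approach as the paper: both sides are admissible of order $2h_p^\circ$, the extremely small slope hypothesis gives $2h_p^\circ < \min_\sigma(k_\sigma+1)$, and Amice--V\'elu uniqueness combined with Proposition~\ref{prop:equal interp} finishes the argument. The paper simplifies your slope bookkeeping for $L_p^\cyc(f)$ by observing directly that $\alpha_p(f)=\alpha_p(f^+)^2$, so the slope doubles; also note that your worry about whether the product ``genuinely achieves'' order $2h_p^\circ$ is unnecessary, since only an upper bound on the order is required for the uniqueness theorem.
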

\begin{proof} The distributions $L_p(f^+)$ and $L_p(f^+\otimes \chi_{E/E^+})$ are admissible of order $h_p^\circ$, then $L_p(f^+)L_p(f^+\otimes \chi_{E/E^+})$ is of order $2h_p^{\circ} < \mathrm{min}(k_\sigma + 1)$. Moreover, as the slope of $f$ is twice the slope of $f^+$ (as $\alpha_p(f) = \alpha_p(f^+)^2$) then $L_p^\cyc(f)$ is also admissible of order $2h_p^\circ$.  Then by \cite{AV75} they are uniquely determined by their interpolation properties, and we conclude using \ref{prop:equal interp}.
\end{proof}

When $f^+$ does not have extremely small slope, the argument no longer works, as both sides are not uniquely determined by the interpolation. We instead argue using $p$-adic families.

\subsection{Twisted $p$-adic $L$-functions}
\label{sec:twisted}

Let $\cV^+$ be a family through $f^+$ in an eigenvariety $\cE_{\Sigma^+,h^+}^d$. Note that $d$ is the only degree of cohomology in which $f^+$ appears, so this makes sense. From \cite{BDJ17,BH17} we obtain a class
\[
\Phi_{\cV^+} \in \hc{d}(Y_1(\n^+),\sD_{\Sigma^+})^{\leq h^+}_{f^+},
\]
and the evaluation maps into $\cD(\Gal_p^+,\cO(\cV^+))$, as required to construct $L_p(\cV^+)$ via the methods of \S\ref{sec:survey}. Since $\cV^+$ is \'etale over $\Sigma^+$ \cite{BDJ17}, we consider this to be valued in $\cO(\Sigma^+)$. As $f^+\otimes\chi_{E/E^+}$ is non-critical, up to shrinking $\Sigma^+$ there exists a family $\sV^+$ through it, \'etale over $\Sigma^+$. 

\begin{lemma}
	The family $\sV^+$ is $\cV^+\otimes\chi_{E/E^+}$ i.e. each classical point of $\sV^+$, corresponding to a modular form $g'$ of weight $\mu$, is of the form $g \otimes\chi_{E/E^+}$ for $g$ the point of $\cV$ above $\mu$. 
\end{lemma}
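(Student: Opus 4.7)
The plan is to realize twisting by $\chi_{E/E^+}$ as a morphism of the Hilbert eigenvariety, apply it to $\cV^+$, and identify the result with $\sV^+$ using the uniqueness of non-critical families.

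First, since $\chi_{E/E^+}$ is a finite order Hecke character, twisting preserves both the weight and the $p$-adic valuation of the $U_{\mathfrak{p}}^\circ$-eigenvalues. At the level of Hecke eigensystems the operation $a_{\mathfrak{l}} \mapsto \chi_{E/E^+}(\mathfrak{l}) a_{\mathfrak{l}}$ is well-defined on the abstract Hecke algebra (after refining the level group $K^+$ to absorb the conductor of $\chi_{E/E^+}$), and I would show it extends to an endomorphism $\tau_\chi$ of the local piece $\cE^d_{\Sigma^+, h^+}$ of the Hilbert eigenvariety, lying over the identity on $\Sigma^+$. Concretely, this can be realized on overconvergent cohomology by convolution against $\chi_{E/E^+}$ (viewed as a character on appropriate local data), which intertwines the original and twisted Hecke actions and preserves slope decompositions.

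Next I would apply $\tau_\chi$ to $\cV^+$. By construction $\tau_\chi(x_{f^+}) = x_{f^+ \otimes \chi_{E/E^+}}$, so $\tau_\chi(\cV^+)$ is a one-dimensional rigid subspace of $\cE^d_{\Sigma^+,h^+}$ through $x_{f^+ \otimes \chi_{E/E^+}}$, lying over $\Sigma^+$ and étale at this point (since $\tau_\chi$ is an isomorphism onto its image, étaleness is preserved). By the Hilbert analogue of Corollary \ref{cor:etale}, proved in \cite{BDJ17,BH17}, the étale family through a non-critical classical point is unique over $\Sigma^+$; since $\sV^+$ is by definition this family, we obtain $\sV^+ = \tau_\chi(\cV^+)$. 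At any classical point $y \in \cV^+$ corresponding to a Hilbert eigenform $g$ of weight $\mu$, the point $\tau_\chi(y)$ then lies in $\sV^+$ above $\mu$ and, by construction of $\tau_\chi$, has Hecke eigensystem equal to that of $g \otimes \chi_{E/E^+}$, giving the desired identification of classical points.

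The main obstacle is the construction of $\tau_\chi$ at the level of overconvergent cohomology, realizing twisting in families rather than just on individual Hecke eigensystems. Once one has a suitable twisting map on local systems that respects the $I_p$-action and the slope decomposition, the remainder of the argument is a formal consequence of the uniqueness of étale extensions. Some extra care is required if $\chi_{E/E^+}$ is ramified at primes above $p$ (which may occur when $p$ ramifies in $E/E^+$); in that case one may need to adjust the $p$-refinement or work with a variant of the level structure, but the overall strategy is unchanged.
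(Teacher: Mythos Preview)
Your overall strategy matches the paper's: construct a twisting map on eigenvarieties, observe that it sends $x_{f^+}$ to $x_{f^+\otimes\chi_{E/E^+}}$, and conclude by uniqueness of the (\'etale) family through a non-critical point. The difference lies in how the twisting map is built.

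You propose realising $\tau_\chi$ directly on overconvergent cohomology via a convolution-type operator, and you correctly flag this as the main technical obstacle. The paper avoids this obstacle entirely: rather than constructing anything at the level of cohomology, it invokes the $p$-adic Langlands functoriality machinery of Johansson--Newton (\cite[Thm.~3.2.1]{JoNew}). Since classical twisting by $\chi_{E/E^+}$ is a well-defined map on Hecke eigensystems at every non-critical-slope classical point (and these are Zariski-dense), the interpolation theorem packages this into a map of eigenvarieties $\eta:\cV^+\to\cE^d_{\Sigma^+,h^+}(K_1(\n^+\fd^2))$ with no further work. This buys a much shorter proof at the cost of citing a black box; your route, if carried out, would give a more explicit construction but requires genuine work to define the twist on $\sD_{\Sigma^+}$-valued cohomology compatibly with the Hecke action and slope decompositions.

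Two small points. First, twisting genuinely changes the level (from $K_1(\n^+)$ to $K_1(\n^+\fd^2)$, with $\fd=\mathrm{disc}(E/E^+)$), so $\tau_\chi$ is not an endomorphism of a single local piece but a map between eigenvarieties of different levels; your parenthetical about ``refining the level group'' hints at this, but the target eigenvariety should be made explicit. Second, your worry about $\chi_{E/E^+}$ being ramified above $p$ does not arise here: the paper assumes $\n^+$ is coprime to $\fd$, and since every prime above $p$ divides $\n^+$, the conductor of $\chi_{E/E^+}$ is prime to $p$.
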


\begin{proof}
	We again use $p$-adic Langlands functoriality \cite[Thm.~3.2.1]{JoNew}. Let $\fd = \mathrm{cond}(\chi_{E/E^+}) = \mathrm{disc}(E/E^+) \subset \cO_{E^+}$. Classically, twisting by $\chi_{E/E^+}$ gives a map
	\[
	-\otimes \chi_{E/E^+} : S_{\lambda^+}(K_1(\n^+)) \longrightarrow S_{\lambda^+}(K_1(\n^+\fd^2)),
	\]
	recalling that $\n^+$ and $\fd$ are assumed coprime. On Hecke eigenvalues, the $T_v$-eigenvalues are multiplied by $\chi_{E/E^+}(v)$, the $U_{\pri}$-eigenvalues by $\chi_{E/E^+}(\pri)$ and the eigenvalues of the diamond operator $\langle u\rangle$ are multiplied by $\chi_{E/E^+}(u)^2$ (i.e.\ the character is multiplied by $\chi_{E/E+}^2$).
	
	We apply \cite[Thm.~3.2.1]{JoNew} again to interpolate this to a map $\eta : \cV^+ \to \cE^d_{\Sigma^+,\h^+}(K_1(\fn^+\fd^2))$. There is a natural map of Hecke algebras from level $K_1(\n^+\fd^2)$ to level $K_1(\n^+)$, where $T_v$ is mapped to $\chi_{E/E^+}(v)T_v$ for $v\nmid \n^+\fd^2$, the $U_{\pri}$ operators are sent to $\chi_{E/E^+}(\pri)U_{\pri}$, and the diamond operator $\langle u\rangle$ for $u \newmod{\n^+\fd^2}$ is sent to $\chi_{E/E^+}^2(u)\langle u\newmod{\n}\rangle,$ and one checks that this gives $\eta$. The image of $\eta$ gives a family of Hecke eigensystems through $f^+\otimes\chi_{E/E^+}$. But this must be $\sV^+$, the unique such family.
\end{proof}

We deduce that there exists a $p$-adic $L$-function $L_p(\cV^+\otimes\chi_{E/E^+})$ which, after restricting to $E^{+,\cyc}$, we consider to be in $\cD(\Gal_p^{+,\cyc},\cO(\cV^+\otimes\chi_{E/E^+}))$. Using \'etaleness over $\Sigma^+$, we see this as valued in $\cO(\Sigma^+)$. In particular,  $L_p(\cV^+)L_p(\cV^+\otimes\chi_{E/E^+})$ makes sense in $\cD(\Gal_p^{+,\cyc},\cO(\Sigma^+))$.

\subsection{The theorem in families}
We consider the line through $\lambda$, $\cW_\lambda$, obtained from $\lambda$ by allowing translation by parallel weights; that is, $\lambda' \in \cW_\lambda$ if and only if $\lambda' - \lambda$ is parallel. As $\lambda \in \cW$ is pure, we have $\cW_{\lambda} \subset \cW$. In particular, above $\Sigma \subset \cW_\lambda$ a sufficiently small neighbourhood of $\lambda$, we have a family $\cV^+$ through $f^+$ in the Hilbert eigenvariety, and a base-change family $\cV$ through $f$ in the CM eigenvariety. Then we have $p$-adic $L$-functions $L_p(\cV^+)$ and $L_p(\cV^+\otimes\chi_{E/E^+})$ over $\cO(\Sigma^+)$.

Under Conjecture \ref{conj:dim}, we have a $p$-adic $L$-function $L_p(\cV)$ over $\cV$. Let $L_p^\cyc(\cV)$ be its restriction to the cyclotomic line. Since $\cV$ is \'etale over $\Sigma$, we may consider $L_p^\cyc(V)$ to be valued in $\cO(\Sigma)$, which we identify with $\cO(\Sigma^+)$ in the natural way.

\begin{proposition}\label{prop:af families}
	Up to renormalising $L_p^\cyc(\cV)$ by an element of $\cO(\Sigma^+)^\times$, we have
	\[
	L_p^\cyc(\cV) = L_p(\cV^+)\cdot L_p(\cV^+\otimes\chi_{E/E^+})\in \cD(\Gal_p^{+,\cyc},\cO(\Sigma^+)).
	\]
\end{proposition}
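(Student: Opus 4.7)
We argue by a Zariski density argument along the one-dimensional base $\Sigma^+$. Set $\Lambda := L_p^\cyc(\cV)$ and $\mathcal{M} := L_p(\cV^+)\cdot L_p(\cV^+\otimes\chi_{E/E^+})$, viewed as elements of $\cD(\Gal_p^{+,\cyc},\cO(\Sigma^+))$ (with the product given by convolution on $\Gal_p^{+,\cyc}$); the plan is to show $\Lambda = u\cdot \mathcal{M}$ for some $u \in \cO(\Sigma^+)^\times$. The first step is to produce a Zariski dense set $S \subset \Sigma^+$ of classical weights $y$ at which $f_y^+$ has extremely small slope. Since $\Sigma \subset \cW_\lambda$ is smooth and parameterises parallel weight translations while the slope $h_p^\circ$ is constant along $\cV^+$, the condition $h_p^\circ < \tfrac{1}{2}\min_\sigma(k_{y,\sigma}+1)$ holds whenever $\min_\sigma k_{y,\sigma}$ is large enough, giving Zariski density of such $S$.

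For each $y \in S$, the specialisation formulae of \S\ref{sec:CM families p-adic} (and their Hilbert analogues from \cite{BDJ17,BH17}) give $\mathrm{sp}_y \Lambda = c_y L_p^\cyc(f_y)$, $\mathrm{sp}_y L_p(\cV^+)= c_y^+ L_p(f_y^+)$, and $\mathrm{sp}_y L_p(\cV^+\otimes\chi_{E/E^+})= c_y^{++} L_p(f_y^+\otimes\chi_{E/E^+})$, with $c_y,c_y^+,c_y^{++} \in L^\times$ after shrinking $\Sigma^+$. Applying Proposition \ref{prop:extremely} at each $y \in S$ (which is permitted precisely because $f_y^+$ is extremely small slope) yields the fibrewise identity
\[
\mathrm{sp}_y \Lambda \;=\; \lambda(y)\cdot \mathrm{sp}_y \mathcal{M}, \qquad \lambda(y)\;:=\; \tfrac{c_y}{c_y^+ c_y^{++}} \;\in\; L^\times.
\]

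To globalise, I would identify $\cD(\Gal_p^{+,\cyc},\cO(\Sigma^+))$ with the ring of rigid analytic functions on $\Sigma^+\times \sX(\Gal_p^{+,\cyc})$ via the Amice--Mellin transform in families, so that $\Lambda$ and $\mathcal{M}$ become two-variable rigid functions. Picking any $\varphi_0 \in \sX(\Gal_p^{+,\cyc})$ with $\mathcal{M}(\cdot,\varphi_0)\not\equiv 0$ on $\Sigma^+$, the rigid function
\[
F(y,\varphi)\;:=\; \Lambda(y,\varphi)\,\mathcal{M}(y,\varphi_0)\;-\; \mathcal{M}(y,\varphi)\,\Lambda(y,\varphi_0)
\]
vanishes on $S\times \sX(\Gal_p^{+,\cyc})$ by the fibrewise identity; for each fixed $\varphi$, $F(\cdot,\varphi)$ is then a rigid function on $\Sigma^+$ vanishing on the Zariski dense set $S$, hence identically. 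Thus $F \equiv 0$, which rewrites as $\Lambda = u\cdot \mathcal{M}$ for $u := \Lambda(\cdot,\varphi_0)/\mathcal{M}(\cdot,\varphi_0)$. After shrinking $\Sigma^+$ so that $\mathcal{M}(\cdot,\varphi_0)$ is nowhere-vanishing, $u\in \cO(\Sigma^+)$; and it is a unit because $\mathrm{sp}_y \Lambda = c_y L_p^\cyc(f_y) \neq 0$ for $y\in S$ forces $u(y) \neq 0$ on a Zariski dense set.

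The main obstacle is the rigorous set-up of a family Amice--Mellin isomorphism $\cD(\Gal_p^{+,\cyc},\cO(\Sigma^+)) \cong \cO(\Sigma^+ \times \sX(\Gal_p^{+,\cyc}))$, together with the existence of a single $\varphi_0$ for which $\mathcal{M}(\cdot,\varphi_0)$ is generically non-vanishing on a neighbourhood of $\lambda$; both rely on controlling the admissibility order of $\Lambda$ and $\mathcal{M}$ over $\Sigma^+$ (both of order $2h_p^\circ$ in the cyclotomic variable) and on non-vanishing of their classical specialisations. Once these ingredients are in place, the Zariski density argument is essentially forced by the uniqueness of admissible distributions from Proposition \ref{prop:extremely}.
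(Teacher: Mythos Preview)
Your overall strategy coincides with the paper's: both restrict to a Zariski-dense set $S\subset\Sigma^+$ of classical weights where $f_y^+$ has extremely small slope, invoke Proposition~\ref{prop:extremely} to obtain the fibrewise proportionality $\mathrm{sp}_y\Lambda = \lambda(y)\,\mathrm{sp}_y\mathcal{M}$ with $\lambda(y)\in L^\times$, and then globalise. The difference is in the globalisation step. The paper interpolates the period ratios $y\mapsto C_y = 1/\lambda(y)$ directly to an element of $\cO(\Sigma^+)$ by citing \cite[Prop.~6.2]{BW18}, observes that one may normalise so that $C_\lambda=1$, and shrinks $\Sigma^+$ to make it a unit. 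You instead pass through the Amice transform in families and use a cross-ratio function $F$ to force $\Lambda = u\cdot\mathcal{M}$ for $u = \Lambda(\cdot,\varphi_0)/\mathcal{M}(\cdot,\varphi_0)$. Your route is more self-contained but requires setting up the family Amice isomorphism (which the paper already uses implicitly at the end of \S\ref{sec:CM families p-adic}); the paper's is shorter but defers the substance to the reference.

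One point needs correction. Your justification that $u\in\cO(\Sigma^+)^\times$ because $u(y)\neq 0$ on the Zariski-dense set $S$ is insufficient: a nonzero rigid function on an affinoid curve can have isolated zeros, possibly at $\lambda$ itself, and then no shrinking helps. The right argument is to show $u(\lambda)\neq 0$ directly. From $F\equiv 0$ one has $\Lambda(\lambda,\varphi)\,\mathcal{M}(\lambda,\varphi_0)=\mathcal{M}(\lambda,\varphi)\,\Lambda(\lambda,\varphi_0)$ for all $\varphi$; if $\Lambda(\lambda,\varphi_0)=0$ then (as $\mathcal{M}(\lambda,\varphi_0)\neq 0$ by choice) $\mathrm{sp}_\lambda\Lambda=0$, contradicting $\mathrm{sp}_\lambda\Lambda = c_\lambda L_p^\cyc(f)$ with $c_\lambda\neq 0$ and $L_p^\cyc(f)\neq 0$ (the latter via interpolation at any non-vanishing critical $L$-value). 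Hence $u(\lambda)\neq 0$, and one may shrink $\Sigma^+$ so that $u$ is a unit. You should also choose $\varphi_0$ from the outset so that $\mathcal{M}(\lambda,\varphi_0)\neq 0$, not merely $\mathcal{M}(\cdot,\varphi_0)\not\equiv 0$, to guarantee the shrinking for $u\in\cO(\Sigma^+)$ is possible around $\lambda$.
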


\begin{proof}
	The space $\Sigma^+$ contains a Zariski-dense set of classical weights, and at a Zariski-dense subset of these, the slope of $\cV^+$ will satisfy the conditions of Proposition \ref{prop:extremely}. At any classical point $y^+ \in \cV^+$ lying above a weight in this Zariski-dense set, the interpolation properties of the $p$-adic $L$-functions -- and Proposition \ref{prop:extremely} -- imply the existence of a $p$-adic period $C_y = c_{y^+} c_{y^+\otimes\chi_{E/E^+}}/c_y$ such that
	\[
	\mathrm{sp}_{y^+}\big[L_p^\cyc(\cV)\big] = C_y \cdot\mathrm{sp}_{y^+}\big[L_p(\cV^+)\cdot L_p(\cV^+ \otimes\chi_{E/E^+})\big].
	\]
	Exactly the same proof as \cite[Prop.~6.2]{BW18} then shows that the function $y \mapsto C_y$ interpolates to an element of $\cO(\Sigma^+)$, which is equal to 1 at $f$ and which we may take to be a unit after possibly shrinking $\Sigma^+$ further. The result follows after renormalising by this element.
\end{proof}

Theorem \ref{thm:artin formalism} then follows by specialising Proposition \ref{prop:af families} at $x_{f^+}$. Note that the $p$-adic period at $f$ may be taken to be equal to 1, so we get equality without an extra $p$-adic period.

\footnotesize
\renewcommand{\refname}{\normalsize References} 
\bibliography{master_references}{}
\bibliographystyle{alpha}

\Addressesshort

\end{document}